\def\B{\mathcal{B}}
\def\Z{\mathbb{Z}}
\def\Q{\mathbb{Q}}
\def\R{\mathbb{R}}
\def\B{\mathcal{B}}
\def\Sc{\mathcal S}
\def\a{\alpha}
\def\b{\beta}
\def\t{\tau}
\def\g{\gamma}
\def\d{\delta}
\def\l{\lambda}
\def\G{\Gamma}
\def\D{\Delta}
\def\half{\tfrac{1}{2}}
\DeclareMathOperator{\sgn}{sgn}
\def\SL{{\rm SL}}
\def\GL{{\rm GL}}
\newcommand*{\defeq}{\stackrel{\text{def}}{=}}
\newcommand{\pMatrix}[4]{\left(\begin{matrix}#1 & #2 \\ #3 & #4\end{matrix}\right)}
\renewcommand{\pmatrix}[4]{\left(\begin{smallmatrix}#1 & #2 \\ #3 & #4\end{smallmatrix}\right)}
\def \ep {\varepsilon}
\newtheorem*{theorem1*}{Dirichlet Approximation Theorem}
\newtheorem*{theorem2*}{Minkowski Approximation Theorem}
\newtheorem*{theorem3*}{Minkowski's First Convex Body Theorem}
\newtheorem*{theorem*}{Theorem}
\newtheorem{theorem}{Theorem}
\newtheorem{lemma}{Lemma}
\newtheorem*{corollary*}{Corollary}
\newtheorem{definition}{Definition}
\theoremstyle{remark}
\numberwithin{equation}{section}
\numberwithin{lemma}{section}
\title{On a theorem of Davenport and Schmidt
}
\date{\today}
\author{Nickolas Andersen}
\address{UCLA Mathematics Department,
Box 951555, Los Angeles, CA 90095-1555} \email{nandersen@math.ucla.edu}
\author{William Duke}
\address{UCLA Mathematics Department,
Box 951555, Los Angeles, CA 90095-1555} \email{wdduke@ucla.edu}
\thanks{Supported by NSF grant DMS 1701638.}
\begin{document}

\begin{abstract}
This work is motivated by a paper of Davenport and Schmidt,
which treats  the question of  when Dirichlet's theorems on the rational approximation of one or of two irrationals can be improved and if so, by how much. 
We  consider a generalization of this question in the simplest case of a single irrational but 
 in the context of the geometry of numbers in $\R^2$, with the sup-norm replaced by a more general one.
  Results include sharp bounds for how much improvement is possible under various conditions.
The proofs use  semi-regular continued fractions that are characterized by a certain best approximation property determined by the norm.
\end{abstract}

\maketitle

\section{Introduction }\label{intro}

In 1842 Dirichlet \cite{Dir}  applied the  pigeonhole  principle 
  to give good approximations of real numbers by rationals.
One form of his theorem in one dimension is the following.
\begin{theorem1*} 
For $\a\in \R$ and any  $Q\in \Z^+$ there are  $p,q\in \Z$ such that $1\leq q\leq Q$ and
$|p-q\a|< \tfrac{1}{Q}.$ 
\end{theorem1*}
\noindent

 Davenport and Schmidt \cite{DS}  considered those $\a$ for which an improvement of this result is possible, at least when we only require that $Q$ be sufficiently large.   More precisely, 
let $\d(\a)$ be the largest number with the property that 
if $c>\d(\a)$ then for  
 {\it every} sufficiently large $Q$ (depending only on $\a$), there are integers
$p,q\in \Z$ with $1\leq q\leq
Q$ and
$
Q|p-\a q|< c,
$
while if $c<\d(\a)$ there are arbitrarily large $Q$ for which no such $p,q$ exist.
If $\d(\a)<1$ then we say that an improvement on Dirichlet's theorem is possible for this $\a$.
Clearly $\d(\a)=0$ for rational $\a$ so we only consider irrational $\a$.

An easy direct argument proves the fact, perhaps surprising at first,  that any irrational $\a$ for which $\d(\a)<1$ must be {\it badly approximable}.
For $\a$ to be   badly approximable means that   for some $c>0$ we have $|\a-\frac{p}{q}|>\frac{c}{q^2}$  for all relatively prime integers $p,q$ with $q>0$.
Davenport and Schmidt gave another proof of this that also shows that, conversely, an improvement on Dirichlet's theorem is possible for  every badly approximable number.  They deduced this from a formula for $\d(\a)$ given in terms of the regular continued fraction expansion of $\a.$
Recall that an irrational $\a$ has a unique infinite 
regular continued fraction expansion 
\begin{equation}\label{scf2}
\a=b_0 +\frac{1}{b_1+}\;\frac{1}{b_2+}\cdots \defeq b_0+\cfrac{1}{b_1 +\cfrac{1}{b_2 +\cfrac{1}{\ddots }}},
\end{equation}
where the  partial quotients $b_n$ satisfy $b_0=\lfloor\a\rfloor$ and $ b_k\in \Z^+$ for $ k\geq 1$.
Also define $u_0=\a-a_0$, $v_0=0$ while for $n\geq 1$ let
\begin{equation}\label{unvnp}
u_n=\frac{1}{b_{n+1}+}\;\frac{1}{b_{n+2}+}\cdots\;\;\mathrm{and}\;\;v_n=\frac{1}{b_{n}+}\;\frac{1}{b_{n-1}+}\;\frac{1}{b_{n-2}+}\cdots \frac{1}{b_1}.
\end{equation}

\begin{theorem*}(Davenport-Schmidt \cite{DS})
For any irrational $\a\in \R$ we have that
\begin{equation}\label{Del}
\d(\a)=\limsup_{n\rightarrow \infty}\big(1+ u_n v_n\big)^{-1}.
\end{equation}
\end{theorem*}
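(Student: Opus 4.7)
The plan is to reduce $\d(\a)$ to a $\limsup$ involving the regular convergents, and then rewrite that expression in terms of $u_n,v_n$ using the defining recursions.

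First, by the classical best-approximation property of the convergents $p_n/q_n$, for any $Q\in\Z^+$ the minimum of $|p-q\a|$ over $(p,q)\in\Z\times\Z$ with $1\leq q\leq Q$ is attained at $q=q_n$, where $n=n(Q)$ is determined by $q_n\leq Q<q_{n+1}$, and this minimum equals $|q_n\a-p_n|$. Hence the condition ``there exists $(p,q)$ with $1\leq q\leq Q$ and $Q|p-\a q|<c$'' is equivalent to the scalar condition $Q\,|q_n\a-p_n|<c$. Since $|q_n\a-p_n|$ is constant as $Q$ ranges over $\{q_n,\dots,q_{n+1}-1\}$, the map $Q\mapsto Q\,|q_n\a-p_n|$ is increasing on this block and maximized at $Q=q_{n+1}-1$. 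Matching the definition of $\d(\a)$ against ``for every sufficiently large $Q$'' then gives
\[
\d(\a)\;=\;\limsup_{n\to\infty} q_{n+1}\,|q_n\a-p_n|,
\]
where the discrepancy between $q_{n+1}-1$ and $q_{n+1}$ dissolves because $q_{n+1}\to\infty$.

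Next I would express $q_{n+1}|q_n\a-p_n|$ in terms of $u_n,v_n$. The standard identity $|q_n\a-p_n|=(\a_{n+1}q_n+q_{n-1})^{-1}$, together with $\a_{n+1}=1/u_n$ and $q_{n-1}/q_n=v_n$, gives $q_n|q_n\a-p_n|=u_n/(1+u_nv_n)$. Multiplying by $q_{n+1}/q_n=1/v_{n+1}$ and then applying the two recursions $u_n=(b_{n+1}+u_{n+1})^{-1}$ and $v_{n+1}=(b_{n+1}+v_n)^{-1}$ (the former built into the definition of $u_n$, the latter coming from $q_{n+1}=b_{n+1}q_n+q_{n-1}$) produces, after a short algebraic simplification that reduces to the identity $u_n-v_{n+1}=u_nv_{n+1}(v_n-u_{n+1})$, the clean formula
\[
q_{n+1}\,|q_n\a-p_n|\;=\;\frac{1}{1+u_{n+1}v_{n+1}}.
\]
A shift of index then yields $\d(\a)=\limsup_n(1+u_nv_n)^{-1}$.

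The computational steps in the second paragraph are essentially routine continued-fraction bookkeeping. The genuine subtle point is the quantifier handling in the first paragraph: one must carefully match ``for every sufficiently large $Q$'' with a $\limsup$ (rather than a $\liminf$ or an unqualified $\sup$), verify that the minimum over $q\leq Q$ really is attained at the last convergent for all $Q$ beyond some index, and align the strict inequalities in the definition of $\d(\a)$ with the boundary behavior at $c=\limsup_n(1+u_nv_n)^{-1}$, so that the exceptional case where equality occurs infinitely often is correctly absorbed into the threshold.
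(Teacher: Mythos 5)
Your argument is correct, and it checks out in detail: the reduction $\d(\a)=\limsup_n q_{n+1}\,|q_n\a-p_n|$ is valid because $Q\mapsto Q\min_{1\le q\le Q,\,p}|p-q\a|$ equals $Q\,|q_n\a-p_n|$ on the block $q_n\le Q<q_{n+1}$ (best approximation of the second kind, which holds for all $n\ge 1$ and so for all large $Q$), the block maximum sits at $Q=q_{n+1}-1$, and the discrepancy term is exactly $|q_n\a-p_n|\le 1/q_{n+1}\to 0$; the algebra in your second paragraph, including the identity $u_n-v_{n+1}=u_nv_{n+1}(v_n-u_{n+1})$ and the resulting formula $q_{n+1}|q_n\a-p_n|=(1+u_{n+1}v_{n+1})^{-1}$, is verified by a direct computation using $\a_{n+1}=1/u_n$, $q_{n-1}/q_n=v_n$, $q_n/q_{n+1}=v_{n+1}$. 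Note, however, that the paper does not prove this statement at all: it is quoted from Davenport and Schmidt \cite{DS}, and your proof is essentially the classical direct continued-fraction argument behind (\ref{Del}). The paper instead recovers and generalizes the formula by a different route: the Minkowski-type algorithm of Lemma \ref{alg1} produces, for a general strongly symmetric norm, the switching times $t_m$ and minimal-basis points $P_m$, and Lemma \ref{tl6} gives $\d_F(\a)=\limsup_m \D\, D_F(\mu_m,\nu_m)$, which for the sup-norm (where $\D=1$, $D_\infty(u,v)=(1+uv)^{-1}$, and the $\infty$-continued fraction differs from the regular one by at most one singularization at the start) specializes to (\ref{Del}). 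Your approach is shorter and entirely elementary but is tied to the sup-norm, where the best approximations and the regular convergents essentially coincide and the quantity $Q|p-\a q|$ is scalar; the paper's geometric formulation is what permits replacing the sup-norm by an arbitrary strongly symmetric norm, at the cost of the machinery of $F$-continued fractions and the function $D_F$.
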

An immediate consequence of (\ref{Del}) is that the irrational $\a\in\R$  for which Dirichlet's theorem can be improved  are precisely those whose 
continued fraction have bounded partial quotients. This condition is well-known to be equivalent to $\a$ being badly approximable
 \cite[ p. 22]{Sch1}.
Real quadratic irrationalities are precisely those whose regular continued fraction expansions are eventually periodic, 
so they are badly approximable.
On the other hand, they are the only known examples that are algebraic. 
A continued fraction discovered by  Euler \cite{Eul} provides an explicit example of an irrational (in fact transcendental) number that is not badly approximable,
namely
\begin{equation}\label{well}
\frac{e-1}{e+1}=\frac{1}{2+}\;\frac{1}{6+}\;\frac{1}{10+}\;\frac{1}{14+}\cdots.
\end{equation}

By a  well-known result of  Khintchine \cite[Thm 29]{Khi}  badly approximable numbers, although uncountable,  are rare in the sense of measure theory.
Thus we have the following.
\begin{corollary*}\label{c2}
The set of  real irrationals for which Dirichlet's theorem can be improved is uncountable and has Lebesgue measure zero.
\end{corollary*}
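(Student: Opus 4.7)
The plan is to reduce the corollary to two separate statements about the set
$$B = \{\alpha \in \R \setminus \Q : \alpha \text{ is badly approximable}\},$$
since by the Davenport--Schmidt theorem (and the equivalence noted right after it) this set $B$ is exactly the set of irrationals for which Dirichlet's theorem can be improved. Concretely, $\delta(\alpha)<1$ forces $u_nv_n$ to be bounded away from $0$ infinitely often, which via \eqref{unvnp} is equivalent to the partial quotients $b_n$ being bounded, which in turn is the standard characterization of bad approximability recorded from Schmidt's book.

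For the uncountability half, I would build an explicit injection $\{1,2\}^{\N}\hookrightarrow B$ by sending a sequence $(\epsilon_n)_{n\geq 1}$ to the irrational whose regular continued fraction expansion \eqref{scf2} has $b_0=0$ and $b_n=\epsilon_n$. Uniqueness of the regular continued fraction for irrationals makes this map injective, and boundedness of the partial quotients ($b_n\leq 2$) places the image inside $B$. Since $\{1,2\}^{\N}$ has cardinality of the continuum, $B$ is uncountable.

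For measure zero, I would write $B = \bigcup_{N\geq 1} B_N$, where
$$B_N = \{\alpha \in (0,1)\setminus\Q : b_n(\alpha)\leq N \text{ for all } n\geq 1\},$$
and cite (or reprove by a Gauss-map / cylinder-set argument) Khintchine's theorem \cite[Thm 29]{Khi}, which asserts that for Lebesgue-a.e.\ $\alpha$ the partial quotients are unbounded. Thus each $B_N$ has Lebesgue measure zero, and a countable union of null sets is null. Since $B$ is invariant under translation by $\Z$, this suffices on all of $\R$.

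The only step that requires genuine care is invoking the measure-theoretic input correctly: the cleanest route is the Borel--Bernstein style observation that $\mathrm{meas}\{\alpha : b_n(\alpha)\geq N \text{ i.o.}\} = 1$ for every $N$, which follows from the ergodicity of the Gauss map (or directly from a Borel--Cantelli argument using $\mathrm{meas}\{\alpha:b_n(\alpha)=k\}\asymp 1/k^2$). This is the substantive ingredient; everything else is bookkeeping.
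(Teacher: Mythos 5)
Your proposal takes essentially the same route as the paper: identify the improvable irrationals with the badly approximable ones via the Davenport--Schmidt formula and the bounded-partial-quotient characterization, then invoke Khintchine \cite[Thm 29]{Khi} (the paper cites this single result for both the uncountability and the measure-zero statement, whereas you make the uncountability explicit with a $\{1,2\}^{\N}$ injection). One small correction: $\d(\a)<1$ forces $u_nv_n$ to be bounded away from $0$ for \emph{all sufficiently large} $n$ (since the $\limsup$ in \eqref{Del} is $<1$), not merely infinitely often --- the latter would not bound the partial quotients.
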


Another   consequence of the formula (\ref{Del}) is a bound for how much the Dirichlet theorem can be improved
when it can be improved at all.\footnote{ For further results about the set of values of $\d(\a)$ see \cite{Iv} and the references therein. See also  our \S \ref{relate}.}
\begin{corollary*}\label{c3}
The smallest value of $\d(\a)$ is given by
\begin{equation}\label{lower}
\d(\a)=\tfrac{1}{10} (\sqrt{5}+5)=0.723607\dots,
\end{equation}
when $\a=\frac{1}{2}(1+\sqrt{5})$.
\end{corollary*}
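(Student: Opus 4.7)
The plan is to apply the Davenport--Schmidt formula $\delta(\alpha)=\limsup_{n}(1+u_n v_n)^{-1}$ from the preceding theorem. Writing $\phi=\tfrac12(1+\sqrt 5)$, an algebraic simplification gives $(1+1/\phi^2)^{-1}=(5+\sqrt 5)/10$, so the corollary reduces to two claims: (a) $\delta(\phi)=(5+\sqrt 5)/10$, and (b) $\delta(\alpha)\ge(5+\sqrt 5)/10$ for every irrational $\alpha$. Statement (b) is equivalent to the universal inequality $\liminf_{n} u_n v_n\le 1/\phi^2$.

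Part (a) is immediate from the continued fraction $\phi=[1;1,1,1,\dots]$: every partial quotient equals $1$, so $u_n=1/\phi$ for all $n$ and $v_n=F_n/F_{n+1}\to 1/\phi$ by the Fibonacci recursion. Therefore $u_n v_n\to 1/\phi^2$ and the formula yields $\delta(\phi)=(5+\sqrt 5)/10$.

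For part (b) I would argue by contradiction. Assume there exist $c>1/\phi^2$ and $N$ with $u_n v_n>c$ for every $n\ge N$. The recursion $u_n=1/(b_{n+1}+u_{n+1})$ turns this inequality into $v_n>c(b_{n+1}+u_{n+1})$; since $v_n<1$ and $u_{n+1}>0$, one obtains $b_{n+1}<1/c<\phi^2<3$, so $b_{n+1}\in\{1,2\}$ for all $n\ge N$. The main obstacle is to exclude the value $b_{n+1}=2$. Suppose $b_{n+1}=2$; then $v_n>2c$, hence $v_{n+1}=1/(2+v_n)<1/(2+2c)$, and applying $u_{n+1}v_{n+1}>c$ yields $u_{n+1}>c(2+2c)=2c+2c^2$. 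Using the identities $1/\phi^2=2-\phi$ and $1/\phi^4=5-3\phi$, one computes $2c+2c^2\ge 14-8\phi=10-4\sqrt 5>1$ for every $c\ge 1/\phi^2$, contradicting $u_{n+1}<1$. Consequently $b_{n+1}=1$ for every $n\ge N$, so $\alpha$ shares the continued fraction tail of $\phi$; then $u_n=1/\phi$ for $n\ge N-1$, and the strictly decreasing map $x\mapsto 1/(1+x)$ with fixed point $1/\phi$ forces $v_n-1/\phi$ to alternate in sign, producing infinitely many $n$ with $v_n<1/\phi$ and hence $u_n v_n<1/\phi^2<c$. This contradicts $u_n v_n>c$ for all $n\ge N$ and completes the proof of (b).
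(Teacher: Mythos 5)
Your proof is correct, and it fills a gap the paper deliberately leaves open: the corollary is stated in the introduction as an ``immediate consequence'' of the Davenport--Schmidt formula (\ref{Del}) with no proof supplied (the original argument being in \cite{DS}). Your reduction is the intended one: since $x\mapsto(1+x)^{-1}$ is decreasing, (\ref{Del}) turns the claim into $\liminf_n u_nv_n\le \phi^{-2}$ for all irrational $\alpha$, with equality of the $\liminf$ for $\phi$, and both your steps check out --- the exclusion $b_{n+1}\le 2$ from $v_n<1$, the elimination of $b_{n+1}=2$ via $2c+2c^2\ge 10-4\sqrt5>1$, and the alternation of $v_n-\phi^{-1}$ under the decreasing map $x\mapsto 1/(1+x)$ (note $v_n\ne\phi^{-1}$ since $v_n$ is rational). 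The only blemish is the harmless off-by-one ``$u_n=1/\phi$ for $n\ge N-1$'': you only know $b_m=1$ for $m\ge N+1$, hence $u_n=1/\phi$ for $n\ge N$, which is all the argument needs. It is worth comparing your route with the paper's own proof of its generalization, Theorem \ref{new2}, whose $p=\infty$ endpoint recovers exactly this corollary: there the authors bound $\mu_m+\nu_m\le\sqrt5-1$ infinitely often (splitting into the cases $a_{m+1}\ge 5$ infinitely often versus partial quotients eventually bounded by $\ell\in\{2,3,4\}$) and then pass to $\mu_m\nu_m$ by an AM--GM/symmetrization inequality, whereas you bound the product $u_nv_n$ directly, which lets you kill the partial quotient $2$ in one sharp computation and handle the all-ones tail by the fixed-point alternation; your argument is more specific to the sup-norm but more self-contained, while the paper's case analysis is built to work uniformly for all $p>2$.
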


 \section{Improving the Minkowski approximation theorem }

Davenport and Schmidt used their theorem as a starting point to obtain results that pertain to the Dirichlet theorems about approximating two numbers simultaneously and later  more generally  \cite{DS2} (see also \cite{Sch0}).
In this paper we will consider a different kind of generalization of Dirichlet's results, one that was conceived of by Hermite and Minkowski. 

Let $F:\R^2\rightarrow \R$ be a fixed norm on $\R^2$ and $\B$ its unit ball.
%This means that   for $P,P'\in \R^2$ we have
%\begin{enumerate}[label=(\roman*)]
%\item $F(P)\geq 0$ and $F(P)=0$ if and only if $P=(0,0)$
%\item $F(tP)=|t|F(P)$ for $t \in \R$
%\item $F(P+P')\leq F(P)+F(P')$.
%\end{enumerate}
Define the stretched norm $F_t$ for $t>0$ by
 \begin{equation}\label{newn}
F_t(x,y)=F(t^{-1}x,t y). 
\end{equation}

The following generalization of Dirichlet's theorem follows from the work of Minkowski. 
Although it was  not stated directly by him, for the purposes of this paper we will refer to it as the {\it Minkowski approximation theorem} (in two dimensions).
\begin{theorem2*}
For a fixed norm $F$ on $\R^2$ let $\Delta=\Delta_F$ be the minimal area of a parallelogram with one vertex at the origin and the other three on the boundary of $\B$.
Fix $\a\in \R$. Then for any real $t\geq1$ there exist integers $p,q$ with $q>0$  such that
\begin{equation}\label{mi1}
\Delta\,F^2_t(q,p-\a q)\leq 1.
\end{equation}
\end{theorem2*}
Note that for this result we are not restricting $t$ to be an integer.
It is not hard to see that for the sup-norm the Minkowski approximation theorem  implies Dirichlet's theorem.
In this case $\D=1.$

The idea of generalizing Dirichlet's theorem to other norms goes back at least to Hermite \cite{Her}. He applied
({\ref{mi1}) for the Euclidean norm, for which $\D=\frac{\sqrt{3}}{2}$,  together with the inequality between arithmetic and geometric means. 
The resulting inequality implies that  for any irrational $\a$ there are infinitely many integers $p,q$ with $q>0$ such that
\begin{equation}\label{hrm}
q|p-\a q|< \tfrac{1}{\sqrt{3}},
\end{equation}
improving upon the corresponding upper bound $1$ given by Dirichlet's theorem.
Later  Minkowski \cite{Mink1,Mink2} showed that (\ref{mi1})  with the 1-norm given by $F(x,y)= |x|+|y|$  and for which $\D=\frac{1}{2}$,
implies (\ref{hrm}) with $\tfrac{1}{\sqrt{3}}$ replaced by  $\frac{1}{2}$.

Given these results of Hermite and Minkowski, it  is  natural to study the generalization for any norm of the quantity $\d(\a)$ from the Davenport-Schmidt theorem.
We want this generalization  to measure to what extent the Minkowski approximation theorem (\ref{mi1}) can be improved
for a particular $\a$.
Hence  for a fixed norm   $F$, 
let  $\d_F(\a)$ be the largest number with the property that 
if $c>\d(\a)$ then {\it for every sufficiently large} $t$
 there are 
$p,q\in \Z$ with $q>0$ such that 
\[
\Delta\,F_t^2(q,p-\a q)< c,
\]
while for $c<\d_F(\a)$ 
 there are arbitrarily large $t$ for which  no such $p,q$ exist.
%In the case of the 1-norm compare \cite{Mosh}.
%Clearly if $G=t'F_t$ for some $t,t'>0$ we have that
%\begin{equation}\label{scal}
%\d_{G}(\a)=\d_F(\a).
%\end{equation}
For a given norm we say that the  Minkowski approximation theorem can be improved for irrational $\a\in \R$ if $\d_F(\a)<1.$
A straightforward argument shows that when $F$ is the sup-norm, $\d_F=\d$ for $\d$  in the Davenport--Schmidt theorem.

We have only been able to obtain satisfactory results about $\d_F$ if we  make the 
 assumption that for all $(x,y)\in \R^2$ the norm $F$ satisfies
\begin{equation}\label{SS}
F(x,y)=F(|x|,|y|).
\end{equation}
 At least  for the study of $\d_F$,  we may assume without any further loss of generality that the norm $F$ also satisfies
\begin{equation}\label{normal}
F(0,\pm1)=F(\pm1,0)=1.
\end{equation}

\begin{definition}
Say that a  norm $F$ is  {\it strongly symmetric} if it satisfies  (\ref{SS}) and (\ref{normal}).
\end{definition}

The most important  strongly symmetric norms are  the $p$-norms.  For $(x,y)\in \R^2$ and a fixed $1\leq p< \infty$ the $p$-norm is defined by
\[
F^{\langle p\rangle}(x,y)=(|x|^p+|y|^p)^{\frac 1p},
\]
while $F^{\langle \infty\rangle}(x,y)=\sup\{|x|,|y|\}.$
 Denote the corresponding $\B$ by $\B^p$, $\D$ by $\D_p$ and  $\d$ by $\d_p.$
Other interesting examples are the two unique strongly symmetric norms  whose unit balls are regular octagons:
$\B^{\mathrm{oct_1}}$ and $\B^{\mathrm{oct_2}}$  (see Figure~\ref{fig:oct}).

\begin{figure}[h]\label{normfig}
    \centering
    \begin{overpic}[width=0.2\textwidth]{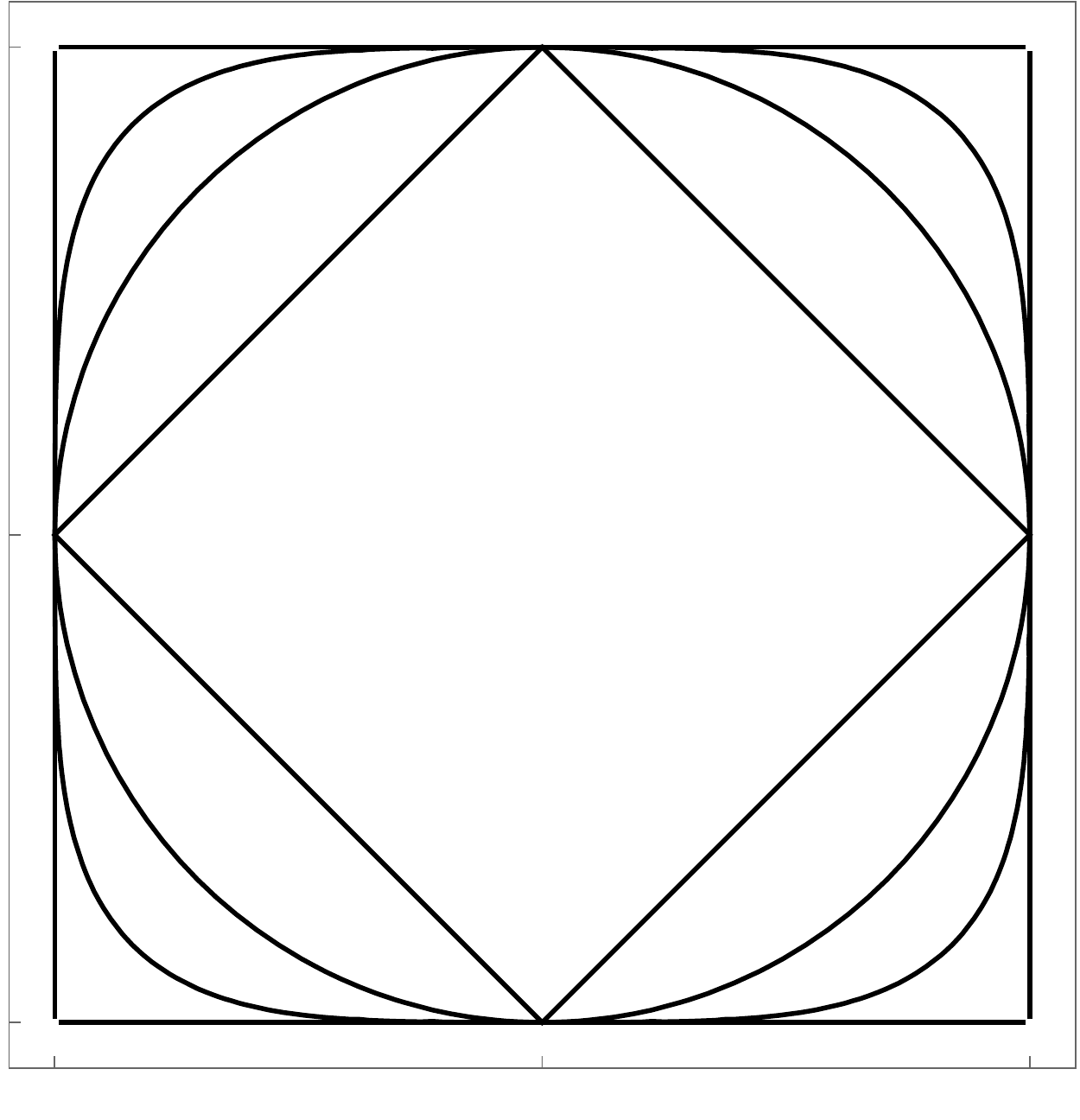}
    \tiny
    \put(-1,-3){$-1$}
   	\put(46.4,-3){$0$}
   	\put(90,-3){$1$}
   	\put(-12, 6){$-1$}
   	\put(-6, 49.5){$0$}
   	\put(-6,93){$1$}
   \end{overpic}
            \qquad
    \begin{overpic}[width=0.2\textwidth]{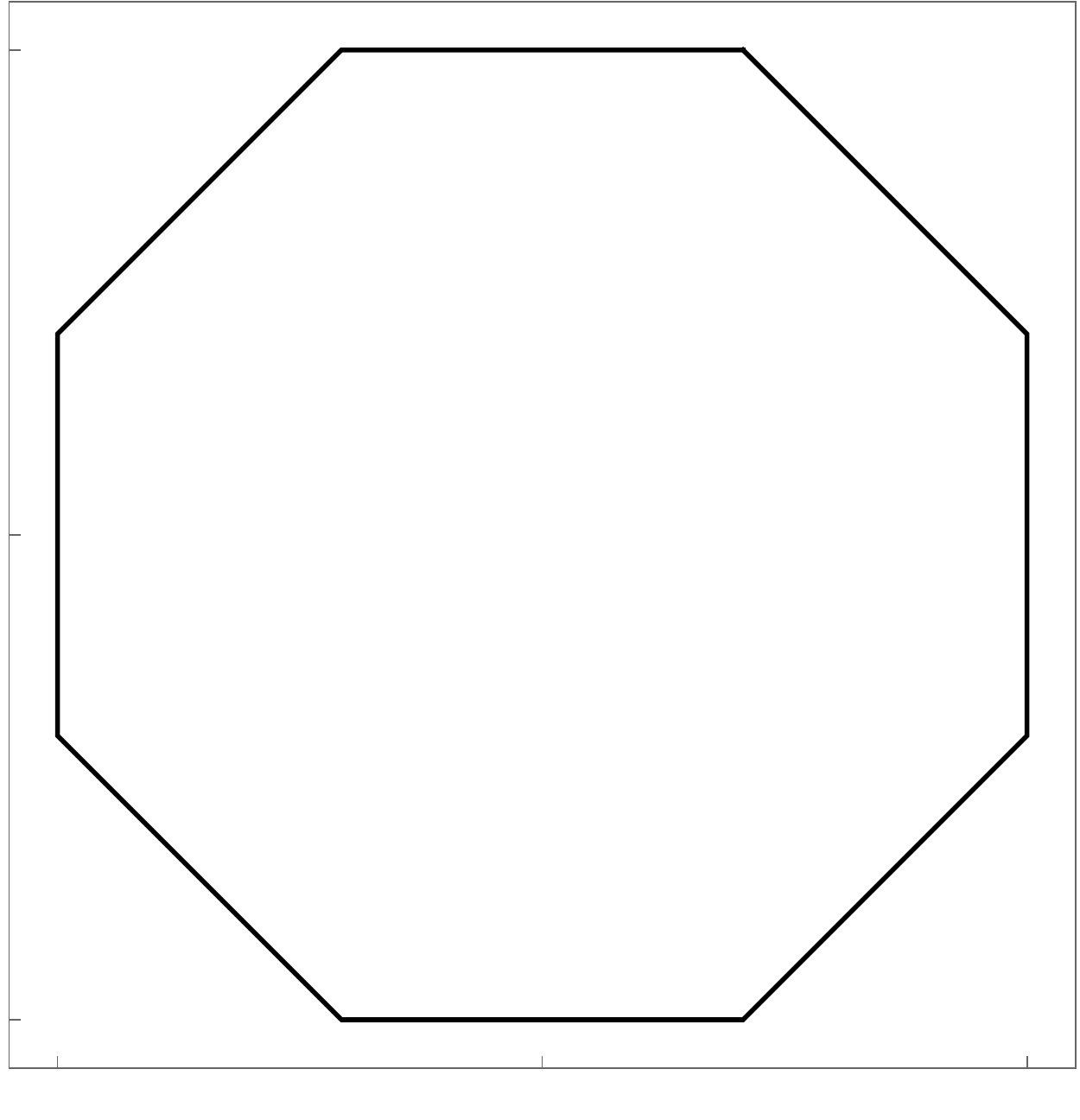}
    \tiny
    \put(-1,-3){$-1$}
   	\put(46.4,-3){$0$}
   	\put(90,-3){$1$}
   	\put(-12, 6){$-1$}
   	\put(-6, 49.5){$0$}
   	\put(-6,93){$1$}
   \end{overpic}
             \qquad
    \begin{overpic}[width=0.2\textwidth]{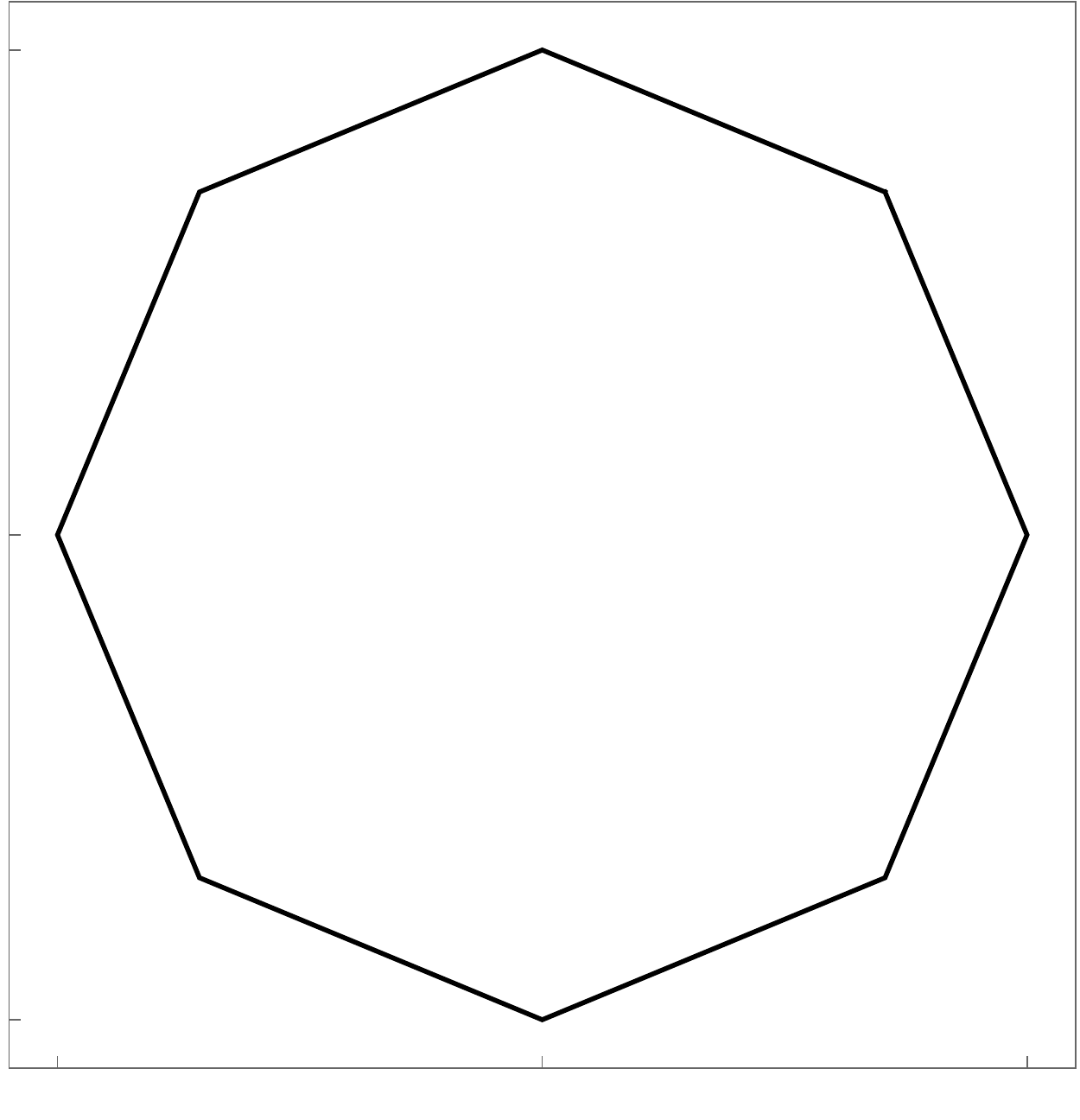}
    \tiny
    \put(-1,-3){$-1$}
   	\put(46.4,-3){$0$}
   	\put(90,-3){$1$}
   	\put(-12, 6){$-1$}
   	\put(-6, 49.5){$0$}
   	\put(-6,93){$1$}
   \end{overpic}
     \caption{ $\B^p$ for $p=1,2,4,\infty$ and $\B^{\mathrm{oct_1}}$ and $\B^{\mathrm{oct_2}}$.}
     \label{fig:oct}
 \end{figure}
Our first result generalizes the first corollary of the theorem of Davenport and Schmidt. It  shows that for a strongly symmetric norm the set of irrationals for which the
 Minkowski approximation theorem can be improved, while uncountable,  is small in the sense of measure theory.
\begin{theorem} \label{t4}
Fix a strongly symmetric norm $F$.
Then the set of all real irrationals for which Minkowski's approximation theorem can be improved is uncountable and has Lebesgue measure zero.
\end{theorem}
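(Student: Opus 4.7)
The plan is to prove the following characterization, which together with Khintchine's theorem immediately yields both conclusions of the theorem: \emph{$\delta_F(\alpha) < 1$ if and only if $\alpha$ is badly approximable}. Granting this equivalence, Khintchine's theorem \cite[Thm 29]{Khi} yields the measure-zero statement, and uncountability follows from the classical fact that the set of $\alpha$ whose regular continued fraction expansion has all partial quotients in $\{1,2\}$ is uncountable and badly approximable.

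To establish the equivalence, I would first develop an $F$-adapted semi-regular continued fraction expansion of $\alpha$ (as alluded to in the abstract), whose convergents are characterized by a best-approximation property keyed to the stretched norms $F_t$ in the sense that each convergent realizes the minimum of $F_t$ over nonzero integer pairs $(q,p-\alpha q)$ for some interval of $t$. From this expansion I would then derive a formula for $\delta_F(\alpha)$ of the same shape as (\ref{Del}), namely a $\limsup$ of an expression involving quantities analogous to the $u_n,v_n$ in (\ref{unvnp}), one which equals $1$ in the limit if and only if the corresponding $F$-partial quotients blow up along a subsequence.

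Having such a formula, one direction is immediate: boundedness of the $F$-partial quotients forces the $\limsup$ strictly below $1$, giving $\delta_F(\alpha)<1$, and large $F$-partial quotients force the $\limsup$ to approach $1$ along a subsequence. It then remains to compare the $F$-adapted partial quotients with the regular ones. Strong symmetry together with the normalization \eqref{normal} forces $F^{\langle\infty\rangle}\le F\le F^{\langle1\rangle}\le 2F^{\langle\infty\rangle}$, so the stretched $F_t$-balls lie uniformly between the stretched sup-norm balls; this allows one to compare the sequence of $F$-best approximations with the sequence of classical convergents and show that boundedness of one set of partial quotients is equivalent to boundedness of the other, i.e., equivalent to $\alpha$ being badly approximable.

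I expect the main obstacle to be the first step: constructing the $F$-adapted semi-regular continued fraction and deriving the analogue of the Davenport--Schmidt formula (\ref{Del}) for $\delta_F(\alpha)$. This requires a careful geometric analysis, depending on the precise shape of $\partial\mathcal{B}$, of which integer pairs realize the minimum of $F_t$ as $t$ varies and of how the transitions between consecutive minimizing pairs can be read off as partial quotients of $\alpha$. Once this framework is in place, the comparison with regular continued fractions and the final measure/cardinality statements should follow along classical lines.
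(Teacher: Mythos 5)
Your central claim---that $\delta_F(\alpha)<1$ if and only if $\alpha$ is badly approximable---is false for a general strongly symmetric norm; it is a special feature of the sup-norm, and both directions of the equivalence break down. The paper's own results provide counterexamples. By Theorem \ref{genl2} and Lemma \ref{strict}, any $\alpha$ whose regular partial quotients are eventually strictly increasing (for instance $\alpha=\tfrac{e-1}{e+1}$ from \eqref{well}), which is well approximable, satisfies $\delta_F(\alpha)=\Delta$, and $\Delta<1$ for every strongly symmetric norm other than the sup-norm (e.g.\ $\Delta_1=\tfrac12$, $\Delta_2=\tfrac{\sqrt 3}{2}$); so Minkowski's theorem can be improved for uncountably many well approximable numbers. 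Conversely, \eqref{d2} exhibits the badly approximable quadratic irrational $\tfrac12(-1+\sqrt 3)$ with $\delta_2\bigl(\tfrac12(-1+\sqrt 3)\bigr)=1$, so no improvement is possible there. Hence the improvable set neither contains nor is contained in the set of badly approximable numbers, and Khintchine's theorem cannot deliver the measure-zero statement. The same example refutes your intermediate step that the $\limsup$ formula equals $1$ exactly when the $F$-partial quotients blow up along a subsequence: for that $\alpha$ the $2$-continued fraction coincides with the regular one and has bounded partial quotients, yet the $\limsup$ is $1$, because $\Delta_2 D_2(u,v)$ attains the value $1$ at interior points of the domain corresponding to critical (hexagonal) lattices, not only as $uv\to 0$. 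Your comparison $F^{\langle\infty\rangle}\le F\le F^{\langle 1\rangle}$ is correct but cannot repair this, since norm equivalence does not preserve the exact threshold $\delta_F<1$.

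What is actually needed for the measure-zero half is to show $\delta_F(\alpha)=1$ for almost every $\alpha$, and this is where the paper's machinery enters: the formula $\delta_F(\alpha)=\limsup_m \Delta\, D_F(\mu_m,\nu_m)$ (Lemma \ref{tl6}), the fact that $\Delta D_F$ attains the value $1$ on $\overline{\Omega}\setminus\{(1,1)\}$ at points corresponding to critical lattices (Lemma \ref{lem:D-max}), and the equidistribution of the orbit $(\mu_m,\nu_m)$ in $\Omega$ with respect to the invariant measure of the $\mathcal{S}$-expansion (Theorem \ref{sexp} together with Lemma \ref{surf1}, via Kraaikamp's metrical theory), which guarantees that for almost all $\alpha$ the orbit comes arbitrarily close to such a critical point infinitely often. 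For uncountability your proposed family with partial quotients in $\{1,2\}$ is also unsafe: for the $2$-norm the alternating pattern $2,1$ gives $\delta_2=1$. The paper instead takes the uncountably many $\alpha$ with strictly increasing partial quotients, for which $(\mu_m,\nu_m)\to(0,0)$ and $\Delta D_F(0,0)=\Delta<1$ whenever $F$ is not the sup-norm, the sup-norm case being the original Davenport--Schmidt result.
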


 Next we have  a uniform lower bound for $\d_F(\a)$ for any strongly symmetric norm and any irrational $\a$.
\begin{theorem} \label{genl}
For any strongly symmetric norm $F$
and any irrational $\a\in \R$ we have that
\begin{equation}\label{half2}
\d_F(\a) \geq \tfrac{1}{2}.
\end{equation}
\end{theorem}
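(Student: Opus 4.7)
Since $\delta_F(\alpha) = \limsup_{t \to \infty} \Delta M(t)^2$ by the definition of $\delta_F$, where
\[
M(t) := \min\{F_t(q, p - \alpha q) : q \geq 1,\, p \in \mathbb{Z}\},
\]
my plan is to produce an unbounded sequence $t_n \to \infty$ at which $\Delta M(t_n)^2 \geq \tfrac{1}{2}$. I will take the $t_n$ to be \emph{transition} values: moments where the integer pair minimizing $F_t(q, p - \alpha q)$ changes, so that two distinct pairs $(q, p) \neq (q', p')$ simultaneously achieve $M(t_n)$.

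First I will verify that such transitions accumulate at infinity. Each map $t \mapsto F_t(q, p - \alpha q) = F(q/t, t(p - \alpha q))$ is continuous and, because $p - \alpha q \neq 0$ by irrationality of $\alpha$, tends to $\infty$ as $t \to \infty$. The envelope $M(t)$ is therefore continuous; and since $M(t) \leq 1/\sqrt{\Delta}$ by the Minkowski approximation theorem, no single pair $(q, p)$ can remain minimizing on any half-line $[T, \infty)$. Hence transitions must occur at infinitely many $t_n \to \infty$.

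The crux is a geometric inequality at each transition. Setting $v = (q/t_n, t_n(p - \alpha q))$ and $w = (q'/t_n, t_n(p' - \alpha q'))$, both lie in the determinant-one lattice $L_{t_n} = \{(q/t_n, t_n(p - \alpha q)) : (q, p) \in \mathbb{Z}^2\}$ and both have $F$-length $M(t_n)$. Positive homogeneity of $F$ forbids positively proportional tied minimizers, so $v$ and $w$ will be $\mathbb{R}$-linearly independent, giving $|v \times w| = |qp' - q'p|$ a positive integer, hence $\geq 1$. The centrally symmetric quadrilateral $\mathrm{conv}(\pm v, \pm w)$ will have area $2|v \times w| \geq 2$ and, by convexity and central symmetry of $\mathcal{B}$, will lie in the scaled ball $M(t_n)\mathcal{B}$ of area $M(t_n)^2 |\mathcal{B}|$. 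This will yield $M(t_n)^2 |\mathcal{B}| \geq 2$.

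Finally, combining with $|\mathcal{B}| \leq 4\Delta$---which follows from Minkowski's first convex body theorem applied to the admissible lattice realizing the parallelogram infimum $\Delta$---I will obtain $M(t_n)^2 \Delta \geq 2\Delta/|\mathcal{B}| \geq \tfrac{1}{2}$, and passing to the limsup will complete the proof. The subtle point is identifying the paper's $\Delta$ with the critical determinant of $\mathcal{B}$: one must verify that for every parallelogram $u, v, u+v \in \partial \mathcal{B}$ the lattice $\mathbb{Z} u + \mathbb{Z} v$ has no nonzero point in $\mathrm{int}(\mathcal{B})$. For strongly symmetric $\mathcal{B}$ this will follow from the coordinate monotonicity of $F$ inherited from axis-aligned symmetry together with convexity.
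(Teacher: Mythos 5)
Your argument is correct, and its core inequality is exactly the paper's: at a time $t$ when the minimum $M(t)$ is attained by two $\R$-linearly independent lattice vectors, the symmetric quadrilateral they span has area $2|qp'-q'p|\geq 2$ and sits inside a ball of area $M(t)^2\,\mathrm{area}\,\mathcal B$, so $\Delta M(t)^2\geq 2\Delta/\mathrm{area}\,\mathcal B\geq \tfrac12$ by (\ref{mi2}). Where you genuinely differ is in how the special times are produced. The paper extracts them from Lemma \ref{alg1}, the Minkowski-type algorithm that constructs $t_m\to\infty$ at which $\{P_{m-1},P_m\}$ is a minimal basis of $L_\alpha$ for $F_{t_m}$; that machinery is built anyway because it underlies the $F$-continued fraction used throughout the paper. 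You instead argue softly: each fixed $F_t(q,p-\alpha q)$ tends to infinity (here irrationality and the monotonicity of Lemma \ref{l2}, or just the reverse triangle inequality, are enough) while $M(t)\leq \Delta^{-1/2}$ stays bounded, so the minimizer must change infinitely often and at each change two pairs tie; this needs only linear independence ($|qp'-q'p|\geq 1$), not the minimal-basis property, and is more elementary and self-contained for this one theorem, at the cost of not producing the structure the later sections need. Two points to tighten: extracting a genuine tie at a transition requires the standard local-finiteness argument (on a compact $t$-interval only finitely many pairs can compete, and the set where a fixed pair minimizes is closed), and linear independence of $v,w$ also needs $w\neq -v$, which is immediate since $q,q'\geq 1$. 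Finally, the identification of the parallelogram minimum $\Delta$ with the critical determinant (i.e.\ admissibility of the lattice generated by $P,P'$ with $P,P',P+P'$ on $\partial\mathcal B$) is the classical fact the paper quotes from Cassels and holds for every symmetric convex body; you do not need a special argument from strong symmetry there, and your proposed derivation of it is the only shaky spot --- but it is inessential once (\ref{mi2}) is invoked.
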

Equality in (\ref{half2}) can hold for the 1-norm. This follows from the next result since $\D_{1}=\frac 12.$
For simplicity say that an irrational  $\a\in \R$ is {\it well approximable} if it is  not badly approximable.
\begin{theorem} \label{genl2}
For any strongly symmetric norm $F$ the smallest value of $\d_F(\a)$ for a well approximable
 $\a$  is $\Delta.$
\end{theorem}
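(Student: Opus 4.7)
The plan is to exploit the $F$-adapted semi-regular continued fraction expansion of $\a$ developed earlier in the paper, whose convergents are the best approximations to $\a$ in the $F_t$-sense. This expansion should furnish a formula of the shape
\[
\d_F(\a) = \limsup_{n\to\infty}\Phi_F(u_n,v_n),
\]
analogous to the Davenport--Schmidt formula (\ref{Del}), in which $u_n,v_n$ are forward and backward tails of the $F$-adapted expansion and $\Phi_F$ is a continuous function on $[0,1]^2$ satisfying $\Phi_F(0,0)=\Delta$.

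For the lower bound $\d_F(\a)\geq \Delta$ on every well approximable $\a$, I would argue as follows. Since $\a$ is not badly approximable, its regular partial quotients are unbounded, and the same holds for the semi-regular partial quotients adapted to $F$ (by an elementary comparison of the two expansions). Along a subsequence $n_k$ at which the semi-regular partial quotient tends to infinity, the definitions (\ref{unvnp}) and their semi-regular analogues force both $u_{n_k}$ and $v_{n_k}$ to tend to $0$. By continuity of $\Phi_F$ we then obtain $\Phi_F(u_{n_k},v_{n_k})\to \Phi_F(0,0)=\Delta$, and hence $\d_F(\a)\geq \Delta$.

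For the reverse inequality, namely the existence of a well approximable $\a$ achieving $\d_F(\a)=\Delta$, the strategy is to construct $\a$ whose $F$-adapted semi-regular partial quotients $c_n$ tend to infinity along the \emph{entire} sequence (in analogy with Euler's example (\ref{well}); e.g. $c_n=n$). For such $\a$ one has $u_n\to 0$ and $v_n\to 0$ globally, so $\Phi_F(u_n,v_n)\to \Delta$ and therefore $\d_F(\a)=\Delta$ exactly. Unboundedness of the semi-regular partial quotients in turn forces the regular ones to be unbounded, so $\a$ is indeed well approximable (and in fact transcendental).

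The principal technical obstacle is identifying the limiting value $\Phi_F(0,0)=\Delta$, since in general $\Phi_F$ has no closed form and must be understood geometrically. Specifically, at the transition time $t_n^*$ between the $n$-th and $(n{+}1)$-st best approximations, the extremal lattice vectors of the stretched lattice corresponding to $(q,p-\a q)$ all have the same $F_{t_n^*}$-length $\mu_1(t_n^*)$ and, suitably rescaled, are vertices of a parallelogram inscribed in $\B$ with area equal to $\det L / \mu_1(t_n^*)^2 = 1/\mu_1(t_n^*)^2$. As $u_n,v_n\to 0$ this inscribed parallelogram approaches one of minimal area, so by the definition of $\Delta$ we get $1/\mu_1(t_n^*)^2\to \Delta$, i.e.\ $\Delta\mu_1(t_n^*)^2\to \Delta$. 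Once this geometric identification is established, the remainder of the proof is a bookkeeping exercise with the semi-regular continued fraction machinery.
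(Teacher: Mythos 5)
There are two genuine gaps. First, your lower bound for an arbitrary well approximable $\a$ rests on the claim that along a subsequence where a partial quotient blows up \emph{both} tails tend to $0$; this is false. A large $a_{m+1}$ makes the forward tail $\mu_m$ small, but the backward tail $\nu_m$ is built from $a_m,a_{m-1},\dots,a_1$ and is completely unaffected; to get both small at the same index you would need two \emph{consecutive} unbounded partial quotients, which well approximability does not provide. Already for the sup-norm (where the $F$-expansion is essentially the regular one) the number $\a=\frac{1}{1+}\,\frac{1}{N_1+}\,\frac{1}{1+}\,\frac{1}{N_2+}\cdots$ with $N_k\to\infty$ has, at the indices $n$ with $b_{n+1}=N_k$, $u_n\to 0$ but $v_n\geq\tfrac12$, so continuity of $\Phi_F$ at $(0,0)$ cannot be invoked. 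The step can be repaired: by Lemma \ref{l2} the common norm of the rows of $\Phi^{-1}(u,v)$ in (\ref{inver}) is at least $\max(t,t^{-1})\geq 1$ before the factor $(1+uv)^{-1/2}$, so $D_F(u,v)\geq (1+uv)^{-1}$, and hence $\mu_{n_k}\to 0$ alone already gives $\limsup_m \D\,D_F(\mu_m,\nu_m)\geq \D$. The paper avoids the formula altogether for this half (Lemma \ref{wa}): choosing $t=q$ for a very good approximation $p/q$, one shows directly that every lattice point $(s,r-\a s)$ has $F_t$-norm at least $1-\ep'$, whence $\d_F(\a)\geq\D$ for every well approximable $\a$.

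Second, your identification of $\Phi_F(0,0)$ is wrong as argued, even though the value $\D$ is correct. As $(\mu_n,\nu_n)\to(0,0)$ the rescaled minimal basis tends to $\{(1,0),(0,1)\}$ and your inscribed parallelogram tends to the unit square, which has area $1$ and is \emph{not} an area-minimizing parallelogram for $\B$ unless $F$ is the sup-norm, since $(1,1)$ need not lie on the boundary of $\B$; thus $1/\mu_1(t_n^*)^2\to 1$, not $\D$. (Note also the internal inconsistency: $1/\mu_1^2\to\D$ would force $\D\,\mu_1^2\to 1$, not $\D$.) The correct reason is much simpler: the factor $\D$ is already built into the definition of $\d_F$, and by the normalization (\ref{normal}) the matrix $\Phi^{-1}(0,0)$ is the identity, so $D_F(0,0)=1$ and $\Phi_F(0,0)=\D\cdot 1=\D$; this is exactly how the paper concludes in Lemma \ref{strict}. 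Finally, in the attainment step you must also verify that an $\a$ whose $F$-expansion has partial quotients tending to infinity actually exists; the paper gets this by taking eventually strictly increasing \emph{regular} partial quotients and noting that then $(u_n,v_n)\to(0,0)$ stays outside $\Sc\subseteq[\tfrac12,1)\times[0,1]$, so no singularizations occur and the $F$-expansion coincides with the regular one. With these repairs your outline matches the paper's attainment argument, but as written both the uniform lower bound and the evaluation at $(0,0)$ are unjustified.
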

We will see in the proof of Theorem \ref{genl2}  that $\d_p(\a)=\Delta$  for any  $\a$ whose regular continued fraction has partial quotients that are eventually  strictly increasing, for example  $\a=\tfrac{e-1}{e+1}$ from (\ref{well}).
For the $p$-norm we can go further and identify the smallest value of $\d_p(\a)$ for any irrational $\a$.  \begin{theorem} \label{new2}
For the $p$-norm the smallest value of $\d_p(\a)$ for an irrational $\a$
is $\D_p$ when $1\leq p \leq 2$ and is 
\begin{equation}\label{Dp}
\tfrac{\D_p}{10} \left(\sqrt{5}+5\right) \left(\left(\tfrac{1}{2} (\sqrt{5}-1)\right)^p+1\right)^{2/p},
\end{equation}
when $2<p\leq \infty.$ The value in (\ref{Dp}) is attained when $\a=\frac{-1+\sqrt{5}}{2}.$
 \end{theorem}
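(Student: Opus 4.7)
The plan is to first establish a $p$-norm analogue of the Davenport--Schmidt formula (\ref{Del}), and then carry out an optimization over irrational $\alpha$. Paralleling the derivation of (\ref{Del}), for each large $t$ the minimizing integer pair $(p,q)$ in $m_p(t) := \min_{q>0} \Delta_p\, F^{\langle p\rangle}_t(q, p-\alpha q)^2$ should come from a (semi-)convergent of $\alpha$. After substituting the identity $q_n|q_n\alpha - p_n| = u_n/(1+u_nv_n)$ and rescaling $s = t/q_n$, the $p$-norm becomes a function of $u_n$ and $v_n$ alone, and the limsup over $t$ should reduce to
\[
\delta_p(\alpha) \;=\; \Delta_p\,\limsup_n H_p(u_n, v_n),\qquad H_p(u,v) \;:=\; \frac{(u^p+1)^{1/p}(v^p+1)^{1/p}}{1+uv},
\]
with $H_\infty(u,v) = (1+uv)^{-1}$ recovering Davenport--Schmidt. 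The numerator of $H_p$ is $F^{\langle p\rangle}(1,u)\,F^{\langle p\rangle}(1,v)$, the $p$-norms of the two lattice vectors framing the critical parallelogram at scale $t \approx q_n$.

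Setting $\varphi = (1+\sqrt 5)/2$, the evaluation at $\alpha = \varphi - 1$ is a direct substitution since $u_n = v_n = \varphi - 1$ for every $n$. Using $1 + (\varphi-1)^2 = (5-\sqrt 5)/2$ and $(5-\sqrt 5)(5+\sqrt 5) = 20$ gives
\[
\delta_p(\varphi-1) \;=\; \Delta_p\, H_p(\varphi-1,\varphi-1) \;=\; \tfrac{\Delta_p}{10}(\sqrt 5 + 5)\bigl((\varphi-1)^p+1\bigr)^{2/p},
\]
which is exactly (\ref{Dp}) and specializes at $p = \infty$ to the Davenport--Schmidt value $(5+\sqrt 5)/10$. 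A short calculus check, using $\tfrac{d}{du}\log H_p(u,u)$ and the factorization $u^{p-2}(1+u^2) - (u^p+1) = u^{p-2} - 1$, shows that $H_p(\varphi-1,\varphi-1)$ equals $1$ precisely at $p = 2$ and is strictly decreasing in $p$. Hence $\delta_p(\varphi-1)$ exceeds $\Delta_p$ for $p < 2$, equals $\Delta_p$ at $p = 2$, and is strictly less than $\Delta_p$ for $p > 2$; this crossover drives the dichotomy in the theorem.

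For $1 \leq p \leq 2$, the reverse-Minkowski estimate $(u^p+1)(v^p+1) \geq (1+uv)^p$ (a single-variable convexity inequality, tight along $u = v$ at $p = 2$) gives $H_p(u,v) \geq 1$ throughout $[0,1]^2$, hence $\delta_p(\alpha) \geq \Delta_p$ for every irrational $\alpha$, and Theorem~\ref{genl2} supplies attainment by well approximable $\alpha$. For $2 < p \leq \infty$, any well approximable $\alpha$ only satisfies $\delta_p(\alpha) \geq \Delta_p > \delta_p(\varphi-1)$, so the extremal $\alpha$ must be badly approximable. The remaining task is to show $\limsup_n H_p(u_n, v_n) \geq H_p(\varphi-1,\varphi-1)$ for every admissible sequence $(u_n, v_n)$; the symmetry $H_p(u,v) = H_p(v,u)$ together with a Hurwitz--Markoff-style argument --- that at infinitely many $n$ the pair $(u_n, v_n)$ cannot both stay strictly above $\varphi - 1$ --- reduces the problem to the diagonal $u = v$, where for $p > 2$ the monotonicity of $H_p(u,u)$ pins the extremum at $u = \varphi - 1$.

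The main obstacle is the first step: rigorously extracting $H_p$ from the stretched $p$-norm geometry. This requires analyzing the best-approximation structure of the $p$-norm through the semi-regular continued fractions mentioned in the abstract, and controlling the transition regions of $t$ between consecutive $q_n$ and $q_{n+1}$, where $m_p(t)$ switches from being realized by $(p_n, q_n)$ to $(p_{n+1}, q_{n+1})$ (or an intermediate semiconvergent). Granted the formula for $H_p$, the remaining analysis splits cleanly into a convexity inequality for $p \leq 2$ and a Hurwitz--Markoff-style optimization for $p > 2$, with the identity $H_p(\varphi-1,\varphi-1)\big|_{p=2} = 1$ producing the clean transition point.
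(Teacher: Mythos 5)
Your proposed master formula is not the one that actually governs $\d_p$, and this is a substantive error rather than a cosmetic one. The correct analogue of (\ref{Del}) is Theorem~\ref{t6}: $\d_p(\a)=\D_p\limsup_m D_p(\mu_m,\nu_m)$ with $D_p$ as in (\ref{delta}), where $(\mu_m,\nu_m)$ come from the \emph{$p$-continued fraction} (an $\Sc$-expansion, with $\mu_m$ possibly negative), not from the regular continued fraction. The quantity realized at a switching scale is $F_{t_m}^2(P_m)$ with $t_m$ chosen so that $F_{t_m}(P_m)=F_{t_m}(P_{m-1})$, i.e.\ $t_m^{2p}=(1-\nu_m^p)/(1-|\mu_m|^p)$; carrying this out gives $D_p$, not your $H_p(u,v)=\bigl((1+u^p)(1+v^p)\bigr)^{1/p}/(1+uv)$, which is the product of the two \emph{unstretched} norms. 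One checks that $D_p\geq H_p$ with equality exactly on the diagonal $u=v$ (and at $p=\infty$), which is why your evaluation at $\a=\beta=\tfrac12(\sqrt5-1)$ reproduces (\ref{Dp}) but the formula fails elsewhere: for $\a=\tfrac12(-1+\sqrt3)$ and $p=2$ the paper shows $\d_2(\a)=1$ (equation (\ref{d2})), whereas your formula would give $\tfrac{\sqrt3}{2}H_2(\a,2\a)\approx 0.90$. Moreover, even granting a lower bound $\limsup_n H_p(u_n,v_n)\geq H_p(\beta,\beta)$ along the regular convergents, it would not transfer to $\limsup_m D_p(\mu_m,\nu_m)$ without relating the two sequences, since the $p$-continued fraction omits precisely certain regular convergents and alters the values of $u,v$ at singularized indices.

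Beyond the formula, the two steps you flag as remaining are exactly where the paper's work lies, and your sketches do not close them. For the evaluation at $\beta$ one needs to know that the $p$-continued fraction of $\beta$ coincides with its regular one, which the paper gets from Lemma~\ref{line} (a neighborhood of the diagonal avoids $\Sc$). For the lower bound when $p>2$, the paper first disposes of the indices with $\mu_m\leq 0$ (there $D_p\geq 1$ trivially), then uses the averaging inequality $D_p(u,v)\geq D_p\bigl(\tfrac{u+v}{2},\tfrac{u+v}{2}\bigr)$ together with the monotonicity of $d_p(x)=D_p(x,x)$, reducing the problem to showing $\mu_m+\nu_m\leq \sqrt5-1$ infinitely often; this is proved by an explicit case analysis on the partial quotients of the $p$-expansion ($a_{m+1}\geq 5$ versus all partial quotients eventually $\leq 4$). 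Your ``Hurwitz--Markoff-style'' claim that $(u_n,v_n)$ cannot both stay above $\beta$ is in the right spirit but is asserted, not proved, and is anchored to the wrong sequence and the wrong function. For $1\leq p\leq 2$ your convexity inequality does yield $H_p\geq 1$, and since $D_p\geq H_p$ this part could be repaired, but as written the proof of the theorem has a genuine gap at its core identity and at the extremal lower bound for $p>2$.
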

The value of $\Delta_p$  is given below in (\ref{pn1}).
See Figure \ref{fig:dp} for graphs of $\D_p$ and the minimum value of $\d_p.$
 It  is not the case that the Minkowski approximation theorem can always be improved
for each badly approximable irrational, not even each real quadratic irrational.  
For example, we show  at the end of \S\ref{sec:s-exp} that
 \begin{equation}\label{d2}
 \d_2\big(\tfrac{1}{2}(-1+\sqrt{3})\big)=1. 
 \end{equation}

Finding the norm or norms with the  largest minimum  value of $\d_F(\a)$ among all strongly symmetric norms  seems  an interesting problem.
The 2-norm has the largest minimum  value $\frac{\sqrt{3}}{2}=0.866025\dots $  of $\d_p$ among all $p$-norms.   It can be shown that the minimum value of $\d_{F}(\a)$ for both of the octagonal norms  is $\frac{1}{8} \left(3 \sqrt{2}+2\right)=0.78033\dots$ (see the end of \S \ref{small}).
Among all of the examples we have considered,  the 2-norm provides the largest minimum (see Figure~\ref{fig:dp}). 
\begin{figure}[h]
    \centering
    \begin{overpic}[width=0.6\textwidth]{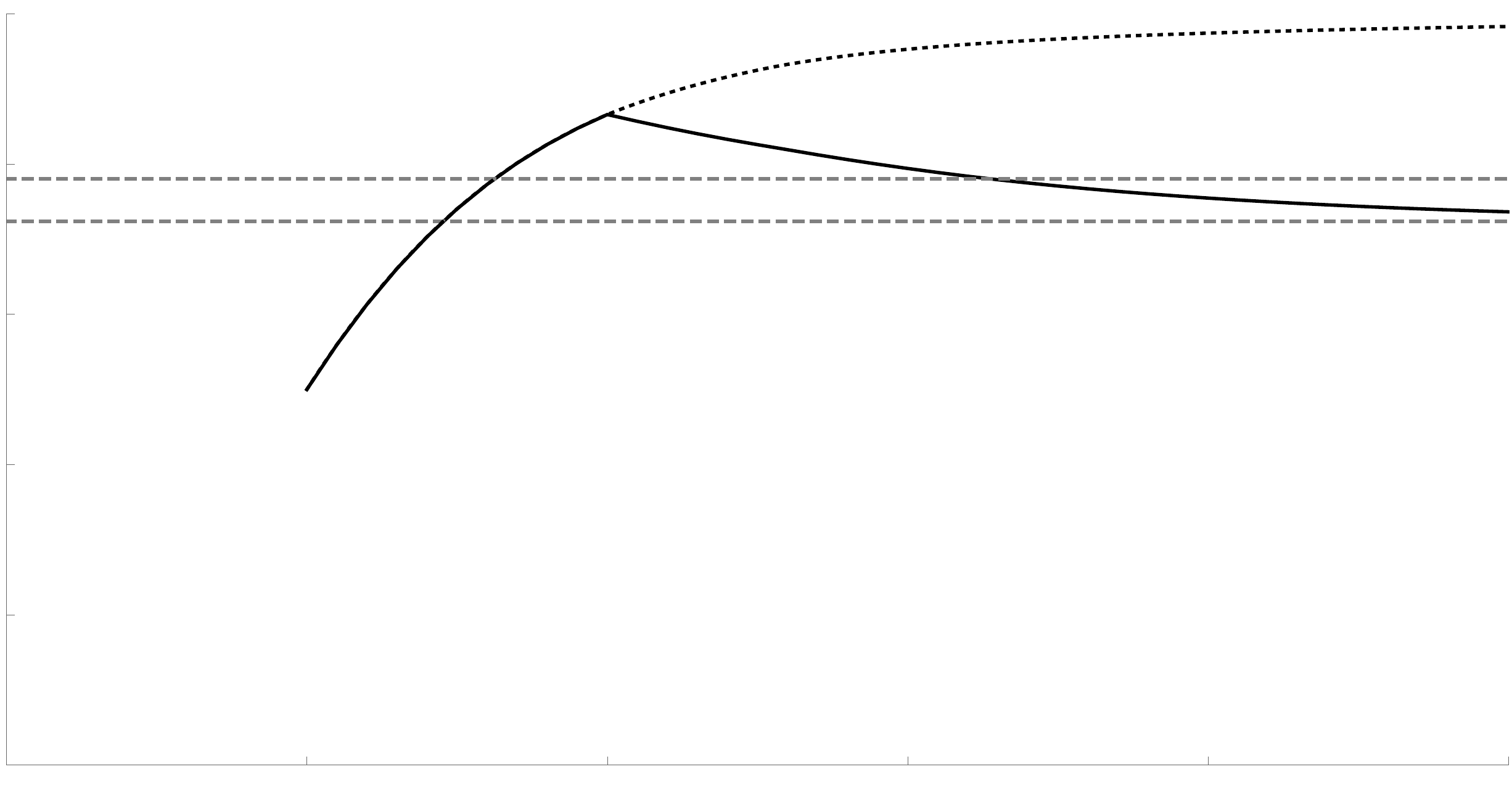}
    \tiny
    \put(19.7,-1){$1$}
    \put(39.6,-1){$2$}
    \put(59.3,-1){$3$}
    \put(79.2,-1){$4$}
    \put(99.1,-1){$5$}
    \put(-4.5,11.5){$0.2$}
    \put(-4.5,21.5){$0.4$}
    \put(-4.5,31.4){$0.6$}
    \put(-4.5,41.4){$0.8$}
    \put(-2,51.3){$1$}
    \put(101,37.6){$\tfrac 1{10}(\sqrt 5+5)$}
    \put(-22,40.3){$\tfrac 1{8}(3\sqrt 2+2)$ ------}
   \end{overpic}    
     \caption{The minimum of $\d_p$ for $p\geq 1$. The dotted line is $\Delta_p$, the minimum of $\d_p(\alpha)$ for well approximable $\alpha$.}
    \label{fig:dp}
\end{figure}

\subsubsection*{Remarks:}

 Results like ours involving continuously varying norms  belong  to the ``parametric geometry of numbers," an area that has recently seen a revival of activity
 stimulated by Schmidt and Summerer \cite{SS, SS1}.
See also 
\cite{Ro} and its references.

The sequence $(u_n,v_n)$ from (\ref{unvnp}) represents a trajectory of the dynamical system on
 $\Omega_0=[0,1)\times [0,1]$ determined by the extended continued fraction map 
 $T:\Omega_0\rightarrow\Omega_0$ given by  $T(0,v) = (0,0)$ and for $u>0$ by
 \begin{equation} \label{eqT}
T(u,v) = \left( \frac 1u - \left\lfloor\frac 1u\right\rfloor, \frac{1}{v+\lfloor\tfrac 1u\rfloor} \right).
\end{equation}
It has an invariant measure $\omega$ 
with density function 
\begin{equation}\label{ome}
 \frac{1}{\log 2}  \frac{1}{(1+uv)^2}.
	\end{equation}
The ergodicity of this system, which is the natural extension of the usual continued fraction dynamical system,   can be used to give a different proof that  Dirichlet's theorem cannot be improved for almost all real irrationals.  An argument of  \cite{Jag}, given as Lemma 5.3.11 of  \cite{DK},   allows one to conclude an almost all result for the special trajectories (\ref{unvnp}).
Our proof of Theorem \ref{t4} proceeds along similar lines except that the trajectories of our dynamical system  are determined by certain  semi-regular continued fractions, which admit $\pm 1$ as partial numerators. The continued fractions we need are examples of $\Sc$-expansions,
which have a well developed metrical theory again based  on the ergodic theorem. 
For some remarks on the connection between these dynamical systems and the geodesic flow on $\SL(2,\Z)\backslash \SL(2,\R)$ see \S \ref{relate}.

The second corollary of the Theorem of Davenport and Schmidt and our generalization, Theorem \ref{new2},
require for their proofs information about {\it all},  rather than almost all trajectories. 
Other aspects of the continued fractions we use are needed, including a  best approximation property given in terms of the norm,
in order to be able to analyze in detail each individual trajectory.

%A special case is  the diagonal continued fraction of Minkowski that arises when the norm is the 1-norm given by $(x,y) \mapsto |x|+|y|.$ 
%Actually, the construction of our continued fraction  is strongly influenced by Minkowski's ideas.
%

 \section{The continued fraction associated to a norm }

We want to give  a generalization of  the  formula (\ref{Del})  of   Davenport and Schmidt and
for that we  require, as previously mentioned,  certain infinite {\it semi-regular}  continued fraction expansions.   Such a continued fraction has the form
\begin{equation}\label{srcf1}
a_0+\frac{\ep_1}{a_1+}\,\frac{\ep_2}{a_2+}\;\frac{\ep_3}{a_3+}\cdots,\;\;\; \ep_m=\pm 1,  \,a_m\in \Z
\end{equation}
where $a_m>0$ and
$a_m+\ep_{m+1} \geq 1$
for all $ m \geq 1$ and  $a_m+\ep_{m+1} \geq 2 $ for infinitely many $m$.
For any $m \geq 0$ the $m^{th}$ convergent of this continued fraction 
\[
\frac{p_m}{q_m}=a_0+\frac{\ep_1}{a_1+}\,\frac{\ep_2}{a_2+}\cdots\frac{\ep_m}{a_m}
\]
uniquely defines relatively prime integers $p_m,q_m$ with $q_m>0$, where  $p_0=a_0$ and $q_0=1.$
 Tietze (\cite{Ti}, see also \cite[p.~135]{Per}) showed that there is an irrational $\a$ to which such a continued fraction converges, meaning that
$\a=\lim_{m\rightarrow \infty} \frac{p_m}{q_m}.$
%Also, set
% $u_0=\a-a_0$, $v_0=0$ and  for $m \geq 1$  define
%\begin{equation}\label{unvn}
%u_m=\frac{\ep_{m+1}}{a_{m+1}+}\;\frac{\ep_{m+2}}{a_{m+2}+}\cdots\;\;\mathrm{and}\;\;v_m=\frac{1}{a_{m}+}\;\frac{\ep_{m}}{a_{m-1}+}\;\frac{\ep_{m-1}}{a_{m-2}+}\cdots \frac{\ep_2}{a_1}.
%\end{equation}
%

The continued fraction we need is  characterized by a best approximation property stated in terms of the given strongly symmetric  norm.
\begin{definition}\label{def1}
Say that a rational number $\frac pq$ where $q>0$ is a best approximation to $\alpha$ with respect to the norm $F$ if there is a $t>1$ depending only on  $\frac pq$ such that
\begin{equation*}
	F_t(q,p-\alpha q) < F_t(s,r-\alpha s)
\end{equation*}
for all rational $\frac rs\neq \frac pq$.
\end{definition}

In the case of the sup-norm Definition \ref{def1}  is equivalent to the usual one that states that a  rational number $\frac pq$ with $q>0$  is a  best approximation to an irrational $\alpha$ if for all rational numbers $\frac rs\neq \frac pq$ with $0<s\leq q$ we have
\begin{equation*}
	|p-\alpha q| < |r-\alpha s|
\end{equation*}
(see Lemma \ref{bap} below).

\begin{theorem}\label{tcf}
Fix a strongly symmetric norm $F$. Every irrational $\a\in \R$ has  a unique
semi-regular continued fraction expansion  whose convergents are precisely the best approximations to $\alpha$ with respect to $F$.
  \end{theorem}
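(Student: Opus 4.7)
My plan is to build the semi-regular expansion by enumerating best approximations as $t$ varies, and then reading off the continued-fraction recursion between consecutive ones. For $t>0$ the set $\{F_t \leq 1\}$ is a centrally symmetric convex body of constant area, horizontally stretched as $t$ grows; by Definition~\ref{def1} a rational $p/q$ with $q>0$ is a best approximation to $\a$ precisely when $v = (q, p-\a q)$ lies in the lattice $\L_\a := \{(s, r - s\a) : r,s \in \Z\}$ and is the strict first minimum of $F_t$ on $\L_\a \setminus \{0\}$ for some $t>1$. The first step is to show that as $t$ increases, the first-minimum vector switches through a discrete, infinite sequence $v_m = (q_m, p_m - \a q_m)$ with $1 = q_0 < q_1 < q_2 < \cdots \to \infty$ and with $|p_m - \a q_m|$ strictly decreasing to $0$. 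Infinitude follows from $\a \notin \Q$, while strict monotonicity and the reduction to $q>0$ use $F(|x|,|y|) = F(x,y)$.

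The crux of the argument is the unimodularity identity $|p_{m+1} q_m - p_m q_{m+1}| = 1$. At the transition parameter $t_m^*$ where the first minimum switches from $v_m$ to $v_{m+1}$, both vectors have equal $F_{t_m^*}$-norm and no nonzero lattice point lies in the open body $\{F_{t_m^*} < F_{t_m^*}(v_m)\}$. A Minkowski-type argument applied to the centrally symmetric convex body $K_m = \{F_{t_m^*} \leq F_{t_m^*}(v_m)\}$ then forces $v_m, v_{m+1}$ to be a $\Z$-basis of $\L_\a$: if they spanned only a proper sublattice, then some lattice point $u$ in the open parallelogram $\{\l v_m + \mu v_{m+1} : 0 < \l, \mu < 1\}$ could, after subtracting integer combinations of $v_m,v_{m+1}$, be translated into $K_m$ and would then, by appealing to convexity of $F$-balls and the fact that the $t$-dependence allows for a slight perturbation of $t$, yield a lattice vector with smaller $F_t$-norm than $v_m$ at some $t$ near $t_m^*$, contradicting the best-approximation property of $v_m, v_{m+1}$. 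Unimodularity then gives unique integers $a_{m+1}$ and $\ep_{m+1} \in \{\pm 1\}$ with $v_{m+1} = a_{m+1}\, v_m + \ep_{m+1}\, v_{m-1}$, which are exactly the partial quotients and signs of the expansion~(\ref{srcf1}).

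The remaining semi-regular constraints $a_m \geq 1$, $a_m + \ep_{m+1} \geq 1$, and $a_m + \ep_{m+1} \geq 2$ infinitely often all follow from $q_m$ being strictly increasing combined with the recursion $q_{m+1} = a_{m+1} q_m + \ep_{m+1} q_{m-1}$: strict monotonicity immediately excludes $(a_{m+1},\ep_{m+1}) = (1,-1)$, and an eventual tail of $a_m + \ep_{m+1} = 1$ would force the $q_m$ to grow too slowly to be compatible with $|p_m - \a q_m| \to 0$ for irrational $\a$. Convergence $p_m/q_m \to \a$ is then automatic from $|p_m/q_m - \a| = |p_m - \a q_m|/q_m \to 0$. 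Uniqueness follows because the sequence of best approximations is intrinsically determined by the pair $(F,\a)$; conversely, given any semi-regular expansion of $\a$, its convergents $p_m'/q_m'$ can be shown to satisfy Definition~\ref{def1} by choosing $t$ in the transition interval between where $v_{m-1}'$ and $v_{m+1}'$ dominate, so they must coincide with the sequence just constructed.

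The main obstacle will be the unimodularity step for norms whose unit ball has flat faces, such as the $1$-norm or the octagonal norms $\B^{\mathrm{oct}_i}$. In the strictly convex case (e.g.~the $2$-norm) the inequality $F(\l v_m + \mu v_{m+1}) < \l F(v_m) + \mu F(v_{m+1})$ is strict for linearly independent $v_m, v_{m+1}$, and the Minkowski argument goes through directly. For non-strictly-convex norms, however, several lattice vectors may simultaneously achieve the $F_{t_m^*}$-minimum along a flat face of $K_m$, and the strict-minimum clause of Definition~\ref{def1} may fail on a whole interval of $t$; handling this requires a more delicate selection among the simultaneously minimizing vectors (for example the one of smallest first coordinate) and a careful verification that the resulting pair still spans $\L_\a$ rather than a proper sublattice.
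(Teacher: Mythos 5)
Your skeleton is the same as the paper's (a one-parameter Minkowski-type stretching algorithm whose transition times yield minimal bases and hence a semi-regular recursion), but two steps you rely on are genuinely flawed. First, the step you yourself flag as the main obstacle --- unimodularity of consecutive best approximations when $\B$ has flat faces --- is exactly where the paper does its real work, and your proposed repair goes in the wrong direction. The paper's Lemma \ref{lb} needs no strict convexity: if $P,P'\in L_\a$ are linearly independent, lie on the boundary of a stretched ball, and the ball contains no nonzero lattice point, then Minkowski's first convex body theorem bounds the index of $P\Z+P'\Z$ in $L_\a$ by $2$, and the index-$2$ case is excluded by a parity argument forcing $\tfrac12(P\pm P')$ to be lattice points on the boundary, hence the ball to be a parallelogram with lattice points at its corners and edge midpoints, hence $\a\in\Q$. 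Moreover, your suggested tie-break among simultaneously minimizing lattice vectors (``the one of smallest first coordinate'') selects points that are never strict minimizers for any $t$ (for $t$ just beyond the transition the tied vector of largest first coordinate already beats them), so they are not best approximations in the sense of Definition \ref{def1}, and listing them as convergents would destroy the word ``precisely'' in the statement; the correct choice, used in Lemma \ref{alg1}, is the tied point of maximal $x$-coordinate, which is what guarantees $t_{m+1}>t_m$ and an open interval of $t$ on which the new point is the unique minimizer.

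Second, your verification of semi-regularity proves the wrong inequality. The defining condition is $a_m+\ep_{m+1}\geq 1$, i.e.\ a partial quotient $a_m=1$ may not be followed by the sign $\ep_{m+1}=-1$; strict growth of the $q_m$ only excludes the same-index pair $a_{m+1}=1$, $\ep_{m+1}=-1$ (which would give $q_{m+1}=q_m-q_{m-1}<q_m$). These are different conditions: $a_m=1$, $\ep_{m+1}=-1$, $a_{m+1}=2$ is perfectly compatible with strictly increasing denominators (e.g.\ $q_{m-2}=1$, $q_{m-1}=2$, $q_m=3$, $q_{m+1}=2\cdot 3-2=4$), so monotonicity of the $q_m$ cannot yield the needed constraint. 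The paper derives it from the other monotonicity you record but never use, namely that $|p_m-\a q_m|$ is strictly decreasing: with $\mu_m$ as in (\ref{unvn}) one has $\mu_m=-(p_m-\a q_m)/(p_{m-1}-\a q_{m-1})$ by Lemma \ref{l1}, hence $|\mu_k|<1$ for all $k$, while $\mu_{m-1}=\ep_m/(a_m+\mu_m)$ and the sign of $\mu_m$ is that of $\ep_{m+1}$; so $a_m=1$ together with $\ep_{m+1}=-1$ would force $|\mu_{m-1}|>1$, a contradiction (this is the argument via (\ref{alp}) and (\ref{umal})). Finally, the ``conversely'' clause of your uniqueness argument is false as stated: not every semi-regular expansion of $\a$ has best-approximation convergents (already for the sup-norm the regular continued fraction with $b_1=1$ fails, cf.\ (\ref{eq:sing-intro})); uniqueness should come instead from the fact that a semi-regular expansion is determined by its sequence of convergents.
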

We will refer to  this continued fraction as the {\it $F$-continued fraction of} $\a$  and,  for the $p$-norm, as the {\it $p$-continued fraction of $\a.$}
For the sup-norm the $\infty$-continued fraction is closely related to, but not always equal to, the regular continued fraction.
Suppose that
 \begin{equation}\label{rega}
\a=b_0+\frac{1}{b_1+}\,\frac{1}{b_2+}\;\frac{1}{b_3+}\cdots
\end{equation}
is the regular continued fraction of an irrational $\a$.
Recall that Lagrange showed (\cite{Lag}, see also \cite[\S 15]{Per})  that every best approximation in the usual sense is  a convergent of the regular continued fraction of $\alpha$ 
and that every convergent, except possibly $b_0$,  is a best approximation to $\alpha$.
In view of Theorem \ref{tcf}, (\ref{rega})  coincides with the $\infty$-continued fraction of $\a$  
if and only if $b_1>1$.  If $b_1=1$ the $\infty$-continued fraction of $\a$  is
\begin{equation}\label{eq:sing-intro}
\a=b_0+1+\frac{-1}{b_2+1+}\,\frac{1}{b_3+}\cdots.
\end{equation}
This is an example of a singularization, which has the effect of contracting the regular continued fraction by removing $b_1=1$ and the convergent $b_0$,
which is {\it not} a best approximation to $\a$ in this case.  This well-known exceptional case  does not occur for the $\infty$-continued fraction.
With this one possible exception, however,  the convergents of the $\infty$-continued fraction and those of the regular continued fraction coincide. 

For any $1\leq p< \infty$,  the  inequality between arithmetic and geometric means immediately gives  
that a necessary condition for a regular convergent $\frac{r_n}{s_n}$  of an irrational $\a$ to be 
a convergent of the $p$-continued fraction is that
\begin{equation}\label{convm}
s_n|r_n -\a s_n|\leq (4^{1/p}\D_p)^{-1}.
\end{equation}
For $p=1$, when the right hand side is $\frac 12$, Minkowski \cite{Mink1.5} showed that (\ref{convm}) is also sufficient.

Just as the formula (\ref{Del})  is given in terms of the sequence $u_n,v_n$ coming from the regular continued fraction,
our generalization will be given in terms of a sequence $\mu_m,\nu_m$   determined by our continued fraction
 $\a=a_0+\frac{\ep_1}{a_1+}\,\frac{\ep_2}{a_2+}\;\frac{\ep_3}{a_3+}\cdots$.  Namely, for a fixed norm we define
 $\mu_0=\a$ and $\nu_0=0$, while for $m \geq 1$ we let
\begin{equation}\label{unvn}
\mu_m=\frac{\ep_{m+1}}{a_{m+1}+}\;\frac{\ep_{m+2}}{a_{m+2}+}\cdots\;\;\mathrm{and}\;\;\nu_m=\frac{1}{a_{m}+}\;\frac{\ep_{m}}{a_{m-1}+}\;\frac{\ep_{m-1}}{a_{m-2}+}\cdots \frac{\ep_2}{a_1}.
\end{equation}
%This reduces to the definition of $u_m,v_m$ from (\ref{unvnp}) in the case of the sup norm for $m\geq 1.$
For a general strongly symmetric norm we will  express $\d_F(\a)$ in terms of these numbers $\mu_m,\nu_m$
in \S \ref{form} below.
For the $p$-norm the formula is completely explicit and we give it here.
For $p$ with $1\leq  p <\infty$ let
\begin{equation}\label{delta}
D_p(u,v)=\frac{1}{1+uv}\left(\frac{(1-|u|^pv^p)^2}{(1-|u|^p)(1-v^p)}\right)^{\frac{1}{p}},
\end{equation}
while when $p=\infty$ set $D_\infty (u,v)=\lim_{p\rightarrow \infty} D_p(u,v)=(1+uv)^{-1}.$

\begin{theorem}\label{t6}
Fix $1\leq p \leq \infty.$
For any irrational $\a$ whose $p$-continued fraction 
is (\ref{srcf1}) we have that
\[
\d_p(\a)=\limsup_{ m\rightarrow \infty}\D_p\,D_p\big(\mu_m,\nu_m \big),\]
where $\mu_m,\nu_m$ are given above in (\ref{unvn}). 
\end{theorem}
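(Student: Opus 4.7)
The plan is to exploit the best-approximation characterization of Theorem~\ref{tcf} and Definition~\ref{def1} to reduce the analysis of $\delta_p(\alpha)$ to the $p$-continued fraction convergents $p_m/q_m$. Setting $e_m = p_m - \alpha q_m$, the definition of $\delta_p$ gives
\[
\delta_p(\alpha) = \Delta_p \limsup_{t\to\infty} \min_m F_t^2(q_m, e_m).
\]
For each $m$, the function $t\mapsto F_t(q_m,e_m)$ is strictly convex in $\log t$ with unique minimum at $t=(q_m/|e_m|)^{1/2}$; since $q_m\uparrow\infty$ and $|e_m|\downarrow 0$, these minima are strictly increasing in $m$, and combined with Theorem~\ref{tcf} (each convergent is strictly optimal on a nonempty $t$-interval) a standard convex-analytic argument shows that the optimal index $m(t)$ is monotone nondecreasing and passes through every $m$ in order. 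Consequently the local maxima of $\min_m F_t^2(q_m,e_m)$ occur exactly at the consecutive crossovers $t_m^*$ solving $F_{t_m^*}(q_m,e_m) = F_{t_m^*}(q_{m+1},e_{m+1})$, and
\[
\delta_p(\alpha) = \Delta_p\limsup_{m\to\infty} F_{t_m^*}^2(q_m,e_m).
\]

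Next I would compute $F_{t_m^*}^2(q_m,e_m)$ explicitly. For $1\leq p<\infty$, setting $r=q_m/q_{m+1}$, $s=|e_{m+1}|/|e_m|$, solving the crossover equation gives $t_m^{*2p} = (q_{m+1}^p-q_m^p)/(|e_m|^p-|e_{m+1}|^p)$, and substituting back yields after routine simplification
\[
F_{t_m^*}^2(q_m,e_m) = (q_{m+1}|e_m|)\left(\frac{(1-r^p s^p)^2}{(1-r^p)(1-s^p)}\right)^{1/p}.
\]
(For $p=\infty$ the crossover is at $t_m^{*2}=q_{m+1}/|e_m|$ with value $q_{m+1}|e_m|$, matching the $p\to\infty$ limit.) The remaining task is to rewrite $r$, $s$, and $q_{m+1}|e_m|$ in terms of $\mu_{m+1},\nu_{m+1}$. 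Starting from the tail representation $\alpha = (p_m + \mu_m p_{m-1})/(q_m + \mu_m q_{m-1})$ (valid for any semi-regular expansion) together with $|p_m q_{m-1} - q_m p_{m-1}| = 1$ and the recurrence $q_{m+1} = a_{m+1}q_m + \epsilon_{m+1}q_{m-1}$, two parallel computations produce the dual identities
\[
|e_m| = \frac{|\mu_m|}{q_m(1+\mu_m\nu_m)} = \frac{1}{q_{m+1}(1+\mu_{m+1}\nu_{m+1})}.
\]
From these one reads off $r=\nu_{m+1}$, $s=|\mu_{m+1}|$ (by dividing the first identity shifted up by one by the second), and $q_{m+1}|e_m| = (1+\mu_{m+1}\nu_{m+1})^{-1}$; substitution into the crossover formula gives exactly $D_p(\mu_{m+1},\nu_{m+1})$, and relabeling $m+1\mapsto m$ completes the proof.

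The main obstacle I anticipate is the monotonicity claim in the first step. It is essential that the optimal-convergent index $m(t)$ sweeps through every $m$ without skipping or backtracking, so that only consecutive crossovers contribute to the local maxima. This relies on combining Definition~\ref{def1} (strict optimality on an open $t$-interval) with the strict convexity of $F_t(q_m, e_m)$ in $\log t$, which for the $p$-norm is a short direct calculation. Once these structural facts are in place, the crossover algebra and the semi-regular continued fraction identities for $|e_m|$ are routine, and the identification $F_{t_m^*}^2 = D_p(\mu_{m+1}, \nu_{m+1})$ falls out immediately.
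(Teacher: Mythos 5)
Your proposal is correct, and its governing idea is the same as the paper's: $\delta_p(\alpha)$ equals $\Delta_p$ times the limsup of $F_t^2$ evaluated at the crossover times where two consecutive best approximations have equal stretched norm, and that crossover value is then identified with $D_p(\mu_m,\nu_m)$. The implementations differ, though. The paper routes the argument through the general-norm Lemma~\ref{tl6}: the sweep/crossover structure is not re-derived but read off from Lemma~\ref{alg1}(iii) (on $(t_{m-1},t_m)$ the minimizer with positive first coordinate is exactly $P_{m-1}$), your unimodality-in-$t$ step is played by Lemma~\ref{ll2}, and the crossover value is computed via the map $\Phi$ and Lemma~\ref{l1}, with the explicit $p$-norm formula obtained by solving for the equalizing $t$ in $\Phi^{-1}(u,v)$. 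You instead work directly with the $p$-norm: the monotone-sweep claim you rightly flag as the delicate point is precisely the content the paper already possesses in Lemma~\ref{alg1}(iii), and for the $p$-norm it can indeed be established along the lines you sketch, since the two-point comparison $F_t(q_m,e_m)\le F_t(s,r-\alpha s)$ reduces to an inequality of the form $A\,t^{2p}\ge B$, so each rational's optimality set is a closed interval, nondegenerate exactly for the convergents by Theorem~\ref{tcf} (and ties on a whole interval are excluded because $\alpha$ is irrational). Your crossover algebra is right, and the identities $|e_m| = |\mu_m|/\bigl(q_m(1+\mu_m\nu_m)\bigr) = 1/\bigl(q_{m+1}(1+\mu_{m+1}\nu_{m+1})\bigr)$ do hold (they follow from \eqref{anp}, \eqref{umal}, $\alpha_{m+1}=a_{m+1}+\mu_{m+1}$, and the recursion for $\nu_m$); they serve as a substitute for Lemma~\ref{l1}, giving $r=\nu_{m+1}$, $s=|\mu_{m+1}|$, $q_{m+1}|e_m|=(1+\mu_{m+1}\nu_{m+1})^{-1}$ and hence $D_p(\mu_{m+1},\nu_{m+1})$ as claimed. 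What the paper's packaging buys is uniformity over all strongly symmetric norms (Lemma~\ref{tl6} is reused later, e.g.\ in the proof of Theorem~\ref{t4}); what yours buys is a self-contained, fully explicit treatment for the $p$-norm that avoids the $\Phi$-formalism.
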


The $F$-continued fraction of an irrational $\a$ for any strongly symmetric norm is an example of an $\Sc$-expansion.
Their theory  has been developed by Kraaikamp  \cite{Kra2} and others (see also \cite{Bos}, \cite{DK} and \cite{IK}).
Recall the definition of $T$ and $\omega$  from (\ref{eqT}) and (\ref{ome}).
A  Borel set $\Sc \subset\Omega_0$ is called a singularization area if $\omega(\partial \Sc)=0$ 
and if
\begin{enumerate}[label=(\roman*)]
	\item $\Sc\subseteq [\tfrac 12,1)\times [0,1]$ and
	\item $T\Sc \cap \Sc \subseteq\{(\b,\b)\},$ where $\b=\frac 12(-1+\sqrt{5}).$
\end{enumerate}
The  $\Sc$-expansion of an irrational $\alpha$ is obtained from the regular continued fraction (\ref{scf2})
by changing
 \begin{equation*}
\a=	\cdots \frac{1}{b_{n}+} \, \frac{1}{1+} \, \frac{1}{b_{n+2}+} \cdots \;\;\;\;\text{into}\;\;\;\;\a=	\cdots \frac{1}{(b_{n}+1)+} \, \frac{-1}{(b_{n+2}+1)+} \cdots
\end{equation*}
for each $n$ such that $(u_n,v_n)\in \mathcal S$.
Note that $(u_n,v_n)\in \Sc$ implies that $b_{n+1} = 1$ by (i).
Also (ii) implies that this procedure is unambiguous.
The result is a unique semi-regular continued fraction for $\a$
whose convergents are precisely those regular convergents $\frac{r_n}{s_n}$ where $n\geq 0 $ is such that 
$(u_n,v_n)\notin \Sc$. For example, the $\infty$-continued fraction discussed above is the $\Sc$ expansion for $\Sc=[\frac 12,1)\times \{0\}$. As usual, we denote $\mathcal S$ by $\mathcal S_p$ in the case of the $p$-norm (see Figure \ref{ps}).
 \begin{theorem} \label{sexp} 
Fix a strongly symmetric norm $F$. There exists a singularization area $\Sc$ so that the $F$-continued fraction of any irrational  $\alpha$ 
is the  $\Sc$-expansion of $\a$.
\end{theorem}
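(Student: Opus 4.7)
The plan is to start from the regular continued fraction of $\alpha$ and determine exactly which of its convergents $\tfrac{r_n}{s_n}$ are best approximations with respect to $F$. By Theorem~\ref{tcf} the $F$-continued fraction exists and its convergents are the best $F$-approximations; by Lagrange's theorem (quoted just after (\ref{rega})) these are necessarily a subset of the regular convergents. The task is therefore to identify the set of indices at which a regular convergent \emph{fails} to be a best $F$-approximation, show that this set depends only on $(u_n,v_n)$ through a common region $\Sc$, and verify that removing those convergents is exactly the $\Sc$-singularization procedure described before the statement.

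For each $n\geq 0$ set $A_n = s_n$ and $B_n = |r_n - \alpha s_n|$. Standard continued fraction identities give $A_nB_n = \tfrac{u_n}{1+u_n v_n}$ and $A_{n-1}/A_n = v_n$. The convergent $\tfrac{r_n}{s_n}$ is a best $F$-approximation precisely when there is some $t>1$ with $F(A_n/t,\,tB_n) < F(A_k/t,\,tB_k)$ for every $k\neq n$, and more generally $< F(s/t,\,t|r-\alpha s|)$ for every other rational. Because $A_k$ is increasing and $B_k$ is decreasing in $k$, and because $F$ satisfies (\ref{SS}), only the two neighbouring convergents (together with a bounded number of intermediate mediants that lie between them) can be serious competitors. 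For those competitors the inequality, after rescaling by $s_n$, reduces to an explicit condition purely in terms of $(u_n,v_n)$ and the unit ball $\B$ of $F$. I then define $\Sc$ to be the set of $(u,v)\in\Omega_0$ where this condition admits no solution $t$; this is by construction the locus of $(u_n,v_n)$ at which the corresponding regular convergent is eliminated.

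Condition (i), $\Sc\subseteq [\tfrac12,1)\times[0,1]$, reduces to showing that if $b_{n+1}\geq 2$ (so $u_n<\tfrac12$) then $\tfrac{r_n}{s_n}$ is automatically a best $F$-approximation, which follows since $A_{n+1}\geq 2A_n$ leaves a wide range of admissible $t$. For (ii), two consecutive singularizations would force $b_{n+1}=b_{n+2}=1$ together with the geometric inequality failing at two successive iterates of $T$; a direct check shows this occurs only at the $T$-fixed point $(\beta,\beta)$. The boundary $\partial\Sc$ is cut out by real-analytic equations derived from the unit sphere of $F$, so has Lebesgue measure zero and hence $\omega$-measure zero by (\ref{ome}). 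Finally, singularizing the regular continued fraction of $\alpha$ at precisely those $n$ with $(u_n,v_n)\in\Sc$ produces a semi-regular continued fraction whose convergents are the best $F$-approximations, and uniqueness in Theorem~\ref{tcf} identifies it with the $F$-continued fraction. The main obstacle is in the second paragraph: converting the best-approximation condition into a clean inequality in $(u_n,v_n)$ governed only by the shape of $\B$, and ruling out the non-convergent rational competitors. Once this geometric input is in place, the verifications of (i), (ii) and $\omega(\partial\Sc)=0$ follow from standard manipulations with $T$ and the explicit form of $\omega$.
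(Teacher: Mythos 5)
Your overall architecture matches the paper's: single out the regular convergents that fail to be best $F$-approximations, show that the failure is governed by a condition on $(u_n,v_n)$ depending only on $\B$, call that locus $\Sc$, check it is a singularization area, and invoke the uniqueness in Theorem~\ref{tcf}. But the two steps you defer are exactly the substance of the proof, and as written they are gaps, not routine verifications. First, the "clean inequality in $(u_n,v_n)$": the paper's criterion is that $\frac{r_n}{s_n}$ is omitted if and only if $F_t(Q_n+Q_{n-1})\leq F_t(Q_n)$ at the unique $t$ with $F_t(Q_n)=F_t(Q_{n-1})$, and translating this into $(u_n,v_n)\in\Sc$ uses $\Phi$ and Lemma~\ref{l1}. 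Getting there is not a matter of "only neighbouring convergents are serious competitors"; the nontrivial point is that between consecutive best $F$-approximations at most one regular convergent is skipped ($P_{m-1}=Q_{n-1}$, $P_m=Q_{n+\ell}$ with $\ell\in\{0,1\}$). The paper proves this (Lemma~\ref{lem:ell=1}) by writing $Q_{n+\ell}=c_\ell Q_n+d_\ell Q_{n-1}$ with $c_\ell\geq d_\ell+1$ for $\ell\geq 2$ (Lemma~\ref{char0}) and playing this against minimality of the basis $\{P_{m-1},P_m\}$ and the strict triangle inequality of Lemma~\ref{equal} to force $c_\ell<d_\ell+1$. Without an argument of this kind you have not ruled out that two or more consecutive regular convergents are dropped, and then the singularization picture (and the well-definedness of the $\Sc$-expansion) collapses.

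Second, your verification of condition (ii) — "a direct check shows this occurs only at the $T$-fixed point $(\beta,\beta)$" — is asserted, not proved, and for a general strongly symmetric norm it is not a computation you can do directly, since $\Sc$ is defined implicitly through $F$. The paper's route is to prove $\Sc\subseteq\Sc_1$ (Lemma~\ref{lem:1-norm}), where $\Sc_1=\{(u,v)\in[\tfrac12,1)\times[0,1]:\,v\leq 2-\tfrac1u\}$ is explicit; this uses a genuinely geometric convexity argument comparing the stretched $F$-ball through $P_{m-1},P_m$ with the $1$-norm along the chord joining them, and then (i), (ii) follow from the explicit description of $\Sc_1$. Some structural input of this sort (either $\ell\in\{0,1\}$ or $\Sc\subseteq\Sc_1$) is indispensable, and your proposal contains neither. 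Finally, your claim that $\omega(\partial\Sc)=0$ because $\partial\Sc$ is "cut out by real-analytic equations derived from the unit sphere of $F$" is false for general strongly symmetric norms: the unit sphere need not be analytic or even smooth (e.g.\ the octagonal norms treated in the paper), so the measure-zero statement needs a different justification (monotonicity/convexity of the defining inequality in one variable, or the containment in $\Sc_1$ together with a description of the boundary as in Lemma~\ref{lem:bound}).
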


\begin{figure}[h]\label{ps}
    \centering
    \begin{overpic}[width=0.35\textwidth]{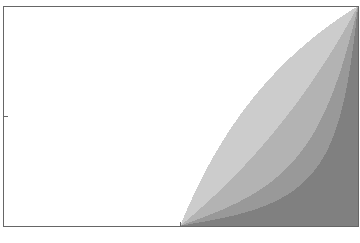}
    \tiny
    \put(48.5,-1){$\frac 12$}
    \put(98,-1){$1$}
    \put(0,-1){$0$}
    \put(-3,3){$0$}
    \put(-3,34){$\frac 12$}
    \put(-3,64){$1$}
    \end{overpic}
    \caption{The $\mathcal \Sc_p$ regions for $p=1,2,3,4$.}
    \label{fig:S}
\end{figure}

\noindent
{\it Remarks:} 
At the beginning of the paper
\cite{Mink1},  Minkowski states without proof several of  the main properties of the $p$-continued fraction for any $p$,
including the best approximation property.  
Our proof of Theorem \ref{tcf},  which  allows for $F$ to be any strongly symmetric norm, was strongly influenced by his ideas.
As previously mentioned, Minkowski \cite{Mink1.5}  also gave the  remarkable result  that  for the 1-continued fraction the necessary condition (\ref{convm}) is also sufficient.
Unsurprisingly, this also follows from our arguments.
In addition, the 2-continued fraction has actually been studied since the time of Hermite \cite{Her0}, especially by Humbert \cite{Hum1,Hum2}.
It is also closely connected to the improper modular billiards studied in \cite{AD}.
It was shown in  \cite{Kra1}
that the 1-continued fraction (known as Minkowski's diagonal continued fraction) is an $\Sc$-expansion.
For related work on the 1-continued fraction see \cite{Mosh}.
That the $p$-continued fraction for $p\neq 1, \infty$  is also an $\Sc$-expansion seems to be new.

\bigskip
In the next section we will review some basic facts from the geometry of numbers
in the case we need, namely  in two dimensions. Seven sections, each with the proof of one of our theorems, follow afterward.
The theorems will be proven in the following order:
\[\ref{genl}\rightarrow \ref{tcf} \rightarrow \ref{t6}\rightarrow \ref{sexp}\rightarrow \ref{genl2}\rightarrow \ref{new2} \rightarrow \ref{t4}.\]
Some concluding remarks are then given.
 Finally, an appendix  contains a number of technical lemmas and their proofs that we will refer to as needed
in the main body of the paper.

\section{Geometry of numbers}\label{GN}

As above let  $F$ be a fixed norm on $\R^2$.
This means that   for $P,P'\in \R^2$ we have
\begin{enumerate}[label=(\roman*)]
\item $F(P)\geq 0$ and $F(P)=0$ if and only if $P=(0,0)$
\item $F(tP)=|t|F(P)$ for $t \in \R$
\item $F(P+P')\leq F(P)+F(P')$.
\end{enumerate}
The unit ball of the norm is 
\[
\B=\{P\in \R^2;F(P)< 1\}.
\]
 This $\B$ is open, bounded, convex and symmetric around 0 and every such body arises as the unit ball of some norm
(see e.g. \cite{Sie}). 
Denote by $\mathrm{area}(\B)$  the  Lebesgue measure of $\B$ on $\R^2$.
It is convenient to define the stretched ball for $t>0$
\[
\B_t=\{(x,y)\in \R^2; F_t(x,y)< 1\}.
\]

Let $L\subset \R^2$ be a (full) lattice.
By the determinant of $L$, denoted $\det{L}$, we mean
$|\det{g}|$ for any $g\in \GL(2,\R)$ whose rows give a $\Z$-basis for $L$. 
The lattice $L$  is {\it admissible} for $\B$ if $\B$ contains no other points of $L$ than $(0,0)$.
The following result is fundamental \cite{Mink2}:
\begin{theorem3*}
If $L$ is  admissible for $\B$ then
 \[\mathrm{area}\,\B\leq4\det{L}.\]
\end{theorem3*}
The {\it critical determinant} of $\B$, denoted $\D(\B)$ or simply $\D$, is the infimum of all determinants of lattices admissible for $\B$. Building on work of Minkowski \cite{Mink1.7,Mink3}, 
Mahler \cite{Mah0} proved that lattices with determinant $\D$ actually exist, and these are called {\it critical lattices.}
Minkowski's first convex body theorem implies that
\begin{equation}\label{mi2}
\D\geq \tfrac{1}{4}(\mathrm{area}\,\B).
\end{equation}
This is sharp for the 1-norm and the sup-norm.

Apparently,  if we wish to evaluate $\d_F(\a)$ exactly we must also know $\D$ exactly.
Finding the critical determinant of a given $\B$
is the  main problem of the geometry of numbers in $\R^2.$
   Although the $n$-dimensional version of this problem is apparently intractable in general, here it is approachable.
  For a given critical lattice $L$ for $\B$ the boundary  of $\B$ must contain a $\Z$-basis $\{P,P'\}$  for $L$ as well as their sum $P+P'$.
 Furthermore, the lattice generated by  any pair of points $P,P'$ with   $P,P',P+P'$ on the boundary of $\B$ is admissible for $\B$ 
 (see \cite[Thm XI p.~160]{Cas}).
Therefore, as Minkowski already knew,  computing $\D$ amounts to solving  the (generally quite difficult) calculus problem of minimizing the area of a parallelogram with one vertex at the origin and the three others on the boundary of $\B$.  
This justifies our definition of $\D$ in the statement of the Minkowski approximation theorem.

Next we review what is known about the value of $\D_p$ for all $p$. Let
\[
\Delta_p^{(0)}=(1-2^{-p})^{\frac{1}{p}}\;\;\;\;\mathrm{and}\;\;\;\;\Delta_p^{(1)}=2^{-\frac{2}{p}}\,\tfrac{1+\t_p}{1-\t_p},
\]
where $0<\t_p<\frac{1}{2}$ satisfies $\t_p^p+1=2(1-\t_p)^p$.
A modification of a conjecture of
Minkowski \cite[p.~51--58]{Mink3} made by Davis \cite{Dav} states that
\begin{equation}\label{pn1}
\D_p=\min \{\Delta_p^{(0)},\Delta_p^{(1)}\}.
\end{equation}
Furthermore, there is a unique value $2.57<\rho<2.58$ so that $\D_p=\Delta_p^{(0)}$ when $2\leq p\leq \rho$, while otherwise $\D_p=\Delta_p^{(1)}$.
 Many  mathematicians obtained partial results,  among them Mordell  \cite{Mor2},   Davis \cite{Dav}, Cohn \cite{Co}, Watson \cite{Wa1,Wa2} and Malyshev \cite{Mal}.
Building on their work,   the proof  of the full conjecture was finally completed by  Glazunov,  Golovanov and Malyshev \cite{GGM}.
 \begin{figure}[h]
    \centering
    \begin{overpic}[width=0.4\textwidth]{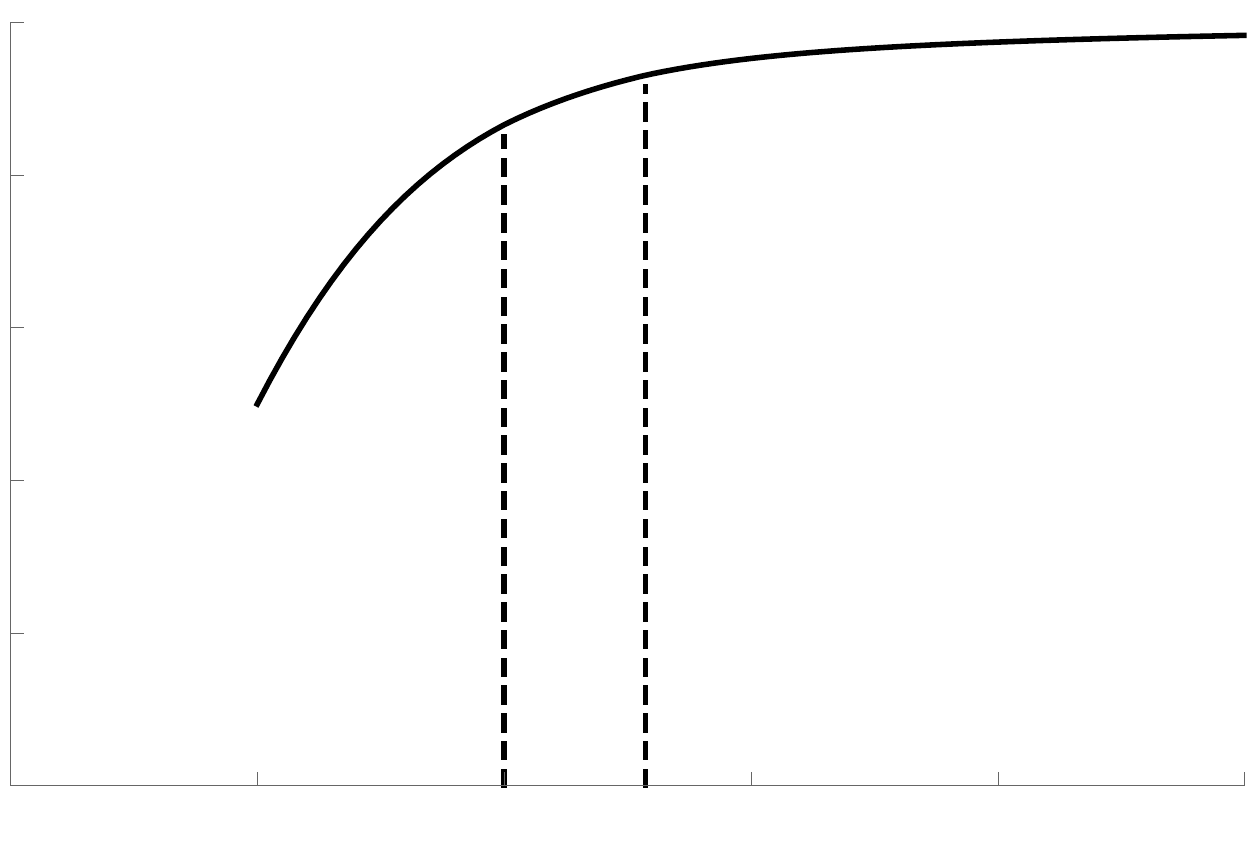}
    \tiny
    \put(0,0){$0$}
    \put(19,0){$1$}
    \put(39,0){$2$}
    \put(50,0.5){$\rho$}
    \put(59,0){$3$}
    \put(78,0){$4$}
    \put(98,0){$5$}
    \put(-7,15.5){$0.2$}
    \put(-7,27.5){$0.4$}
    \put(-7,40){$0.6$}
    \put(-7,52){$0.8$}
    \put(-3,64){$1$}
    \end{overpic}
    \caption{$\D_p$ for $1\leq p\leq 5$}
\end{figure}

In the case of the $p$-norm parallelograms that minimize the area may be given explicitly. 
For $2\leq p\leq \rho$ we may take the parallelogram with vertices at $0,P,P',P+P'$ where
\begin{equation}\label{par1}
P=(1,0)\;\;\mathrm{and}\;\; P'=\big(\tfrac{1}{2},\tfrac{1}{2}(2^p-1)^{\frac{1}{p}}\big).
\end{equation}
For $1\leq p\leq 2$ or $\rho\leq p\leq\infty$ we may take
\begin{equation}\label{par2}
P=\big(2^{-\frac{1}{p}}(1-\t_p)^{-1},-2^{-\frac{1}{p}}\t_p(1-\t_p)^{-1}\big)\;\;\mathrm{and}\;\;P'=\big(2^{-\frac{1}{p}},2^{-\frac{1}{p}}\big)
\end{equation}
where again $0<\t_p<\frac{1}{2}$ solves $\t_p^p+1=2(1-\t_p)^p.$
Except when  $p=1,2$ or $ \infty$ these parallelograms are unique up to obvious symmetries. 
When $p=1,2$ or $\infty$  there are infinitely many essentially different minimizing parallelograms.
They are easily parameterized. 
For example, when $p=2$ all are obtained by rotating the standard hexagonal lattice coming from (\ref{par1}).

\bigskip
 Minkowski's method can be restated as saying that $3\Delta(\B)$ is the minimal area of an affinely regular symmetric  hexagon inscribed in $\B$.
 A useful alternative due to Reinhardt \cite{Rei} is that  $4\Delta(\B)$ is the minimum area of a symmetric convex circumscribed hexagon  (see also \cite[p.~239]{Cas} or \cite[Thm 2 p.~243]{GL}).
Using this fact, that he also found independently,  Mahler  \cite{Mah1}  computed $\Delta(\B^{\mathrm{oct_1}})=\sqrt{2}-\half$ for the regular octagon from Figure \ref{normfig}. 
Thus we also have $\Delta(\B^{\mathrm{oct_2}})=\frac{1}{8} \left(3 \sqrt{2}+2\right)$, obtained by scaling.

%%%%%%%%%%%%%%%%%%%%%%%%%%%%%%%%%%%%

\section{ A Minkowski-type algorithm}
In this section we will prove Theorem   \ref{genl}. First we give a needed definition.
 A {\it minimal basis} for  a lattice $L\subset \R^2$ with respect to a norm $F$ is a $\Z$-basis $\{P,P'\}$  for $L$  with the property that  \[F(P)=F(P')=\min_{\substack{P_0\in L\setminus\{0\}}}F(P_0).\]   
 
 For $\a\in\R$  let 
\begin{equation}\label{lal}
 L_\a=(1,-\a)\Z+(0,1)\Z.
 \end{equation}
Obviously $L_\a$ has determinant one. 
 
 To prove Theorems \ref{genl} and \ref{tcf} we require an algorithm that constructs a sequence of points $P_n\in L_\a$  and positive numbers $t_m$ 
 such that $\{P_{m-1},P_m\}$ gives a minimal basis for $L_\a$ with respect to $F_{t_m}.$ We also want $P_{m-1}$ to have the smallest norm $\|P_{m-1}\|_{t}$ among non-zero points in $L_\a$  for any $t\in (t_{m-1},t_{m})$.
 We will start with $P_{-1}=(0,1)$ and $P_{0}=(1,-\a)$.
Roughly speaking,  given $P_{m-1}$, to find the new point $P_m$ and the associated  $t_m$, we simultaneously expand $\B$ in the $x$-direction while shrinking in the $y$-direction
in a such a way that $P_{m-1}$ remains on its boundary until we encounter $P_m$.  
We then repeat this procedure starting with $P_m$ (see Figure~{\ref{proced}).
Our algorithm will produce pairs of lattice points in $L_\a$ that are linearly independent over $\R$ and on a ball for which $L_\a$ is admissible. First we need to know that they give a basis for $L_\a.$

 \begin{lemma}\label{lb}
  Let $\a\in \R$ be irrational and $F$ be a fixed strongly symmetric norm. 
Suppose that  $P,P'\in L_\a$ lie on the boundary of  $\B_t$  for some $t>0$ and are linearly independent over $\R$. If $L_\a$ is admissible for $\B_t$   then $\{P,P'\}$ 
gives a $\Z$-basis for $L_\a$.
  \end{lemma}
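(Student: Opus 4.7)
The plan is to let $L' = \Z P + \Z P'$ and show that $L' = L_\a$ by proving that the index $k = [L_\a : L']$, which equals $|\det(P,P')|$ since $\det L_\a = 1$, is $1$.

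First I will bound $k$ via an area argument. Because $\bar \B_t$ is convex and contains $\pm P,\pm P' \in \partial \B_t$, it contains the parallelogram $R$ with vertices $\pm P, \pm P'$, whose area is $2|\det(P,P')| = 2k$. On the other hand, admissibility of $L_\a$ for $\B_t$ together with Minkowski's First Convex Body Theorem gives $\mathrm{area}(\B_t) \leq 4\det L_\a = 4$. Combining yields $2k \leq 4$, so $k \in \{1, 2\}$.

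To rule out $k = 2$, I will use strong symmetry together with the irrationality of $\a$. When $k = 2$, equality must hold throughout in the previous step, which forces $R = \bar \B_t$; so $\bar \B_t$ is a parallelogram with vertex set $V = \{\pm P, \pm P'\}$. Strong symmetry of $F$ makes the reflection $\sigma\colon (x,y) \mapsto (x,-y)$ preserve $\bar \B_t$, hence permute $V$. A case analysis on the value of $\sigma(P) \in V$---combined with the parametrization $L_\a = \{(m,-m\a+n) : m,n \in \Z\}$ and the fact that both $P, P' \in L_\a$---produces in each case a relation of the form $cP_1\a \in \Z$ with $c \in \{1,2\}$ a nonzero integer, where $P = (P_1, P_2)$. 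Irrationality of $\a$ forces $P_1 = 0$; applying the same reasoning to $P'$ (whose first coordinate is nonzero by linear independence from $P = (0, P_2)$) makes its first coordinate vanish as well, contradicting linear independence. The subcases $\sigma(P) = \pm P$ are handled analogously by first extracting $P_2 = 0$ or $P_1 = 0$ and then using irrationality.

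The main obstacle is keeping the case analysis in the final step exhaustive and clean; the area bound itself is a direct application of Minkowski's theorem together with the standard formula for the area of a parallelogram with vertices $\pm P, \pm P'$.
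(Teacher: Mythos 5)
Your proof is correct, and its first half---bounding the index $k=[L_\a:\Z P+\Z P']=|\det(P,P')|$ by $2$ using the parallelogram $R$ with vertices $\pm P,\pm P'$ inside $\overline{\B_t}$ together with Minkowski's first convex body theorem---is the same as the paper's (the paper states the index bound tersely, but this is the argument behind it). Where you genuinely diverge is in ruling out $k=2$. The paper writes $P,P'$ in a $\Z$-basis of $L_\a$ and runs a parity argument to show that $\tfrac12(P+P')$ and $\tfrac12(P-P')$ are themselves points of $L_\a$; admissibility and convexity then force these midpoints onto the boundary, so $\overline{\B_t}$ is a parallelogram, which strong symmetry identifies as a stretched $1$-norm or sup-norm ball whose corners and side midpoints are lattice points, producing a lattice point on the $x$-axis and hence the contradiction with irrationality. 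You instead note that $k=2$ saturates the area bound ($\mathrm{area}(R)=4=4\det L_\a$), so $R=\overline{\B_t}$ follows at once with no parity bookkeeping, and you finish by letting the reflection $\sigma(x,y)=(x,-y)$ permute the four vertices and reading off relations of the form $m\a\in\Z$ or $2m\a\in\Z$ from the coordinates $(m,\,n-m\a)$ of $P$ and $P'$. Both endgames are sound; yours is leaner in that it never needs the auxiliary basis or the half-lattice points, while the paper's parity step buys the extra structural fact (lattice points at the side midpoints) that makes its final identification immediate. When you write out your case analysis, be explicit that $\sigma(P)=\pm P'$ forces $m=0$ and hence $P'=\mp P$ (contradicting independence), that $\sigma(P)=P$ forces $P=0$, and that $\sigma(P)=-P$ only gives $P_1=0$, after which the same trichotomy must be run on $P'$; as you anticipate, this is routine but must be stated for exhaustiveness.
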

\begin{proof}
Consider the sublattice $P\Z+P'\Z$ of $L_\a$ generated by these lattice points.  By Minkowski's first convex body theorem its index in $L_\a$ can only be 1 or 2.
In the latter case suppose that  $P=aQ+bQ'$ and $P'=cQ+dQ'$ 
where  $L_\a=Q\Z+Q'\Z$, so
 $|ad-bc|=2.$ If $a$ were even and $c$ odd we would have that $b$ is even and so $\tfrac{1}{2} P=(\frac{a}{2})Q+(\frac{b}{2})Q'$ would be a non-zero point in $ \B_t\cap L_\a$.  A similar argument  disallows $c$ being even and $a$ odd. Thus $a$ and $c$ are either both even or both odd.
Similarly $b$ and $d$ are either both even or both odd. In any case 
\[\tfrac{1}{2}(P+P')=(\tfrac{a+c}{2})Q+(\tfrac{b+d}{2})Q'\;\;\;\mathrm{and}\;\;\; \tfrac{1}{2}(P-P')=(\tfrac{a-c}{2})Q+(\tfrac{b-d}{2})Q'\]
are  distinct points of $L_\a$. As $\B_t$ is convex they must lie on the boundary of $\B_t$. It follows that $\B_t$ must be a
parallelogram and strong symmetry implies it is a stretched ball for either the 1-norm or the sup norm.  As the corners and midpoints of the sides are lattice points 
we would have to have that $L_\a$ contains points of the $x$-axis, i.e. $\a$ would be rational.
\end{proof}

In the next lemma we make the whole process precise.
Clearly to represent any $L_\a$ we may assume that $\a\in (-\frac 12,\frac 12].$
  \begin{figure}[h]\label{proced}
    \centering
    \begin{overpic}[width=0.7\textwidth]{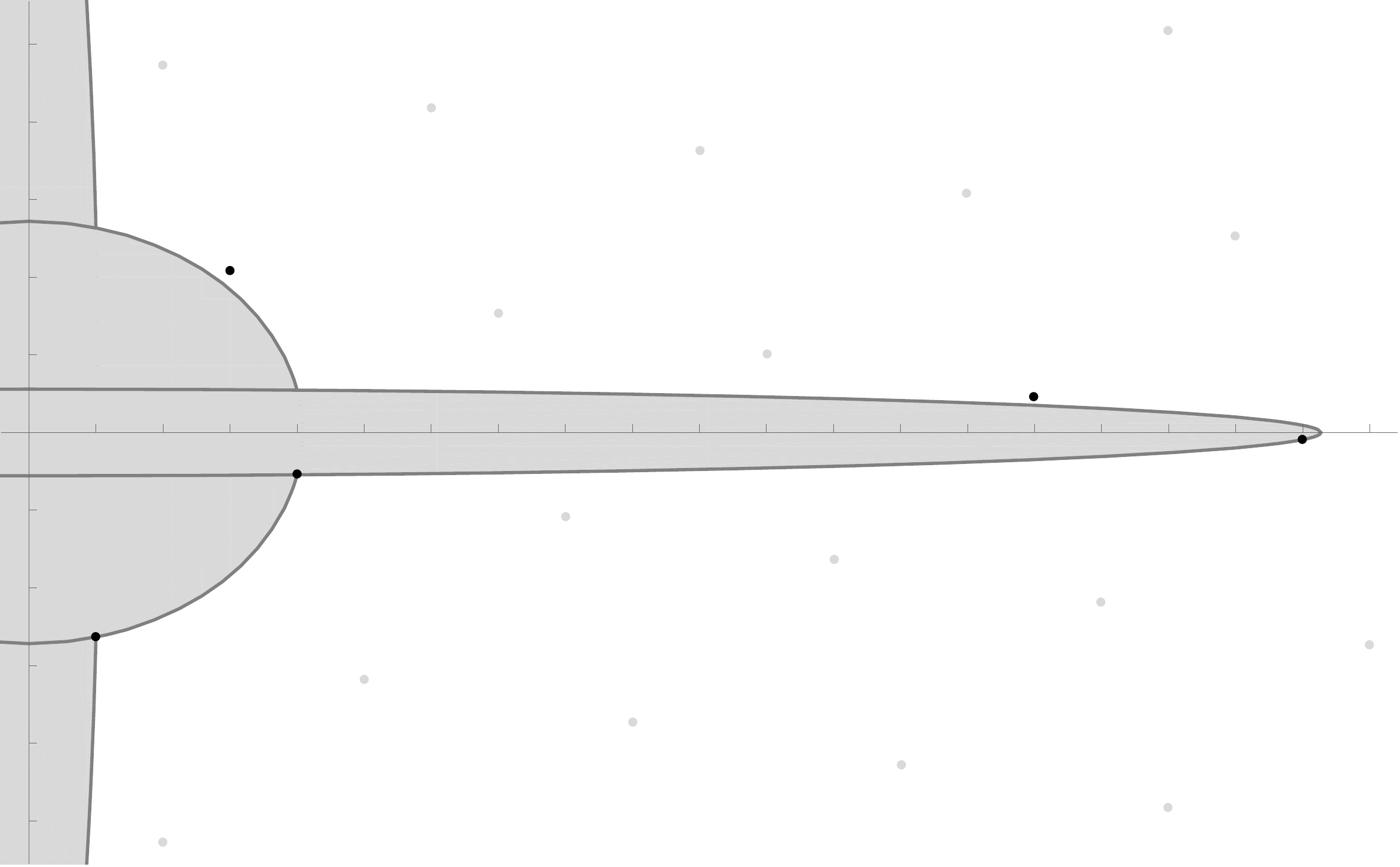}
    \tiny
    \put(25.4,28.7){$5$}
    \put(49,28.7){$10$}
    \put(72.9,28.7){$15$}
    \put(96.6,28.7){$20$}
    \put(0.5,28.9){$0$}
    \put(-2,41.5){$0.2$}
    \put(-2,52.5){$0.4$}
    \put(-4,19.2){$-0.2$}
    \put(-4,8.2){$-0.4$}
    \scriptsize
    \put(8,13){$P_0$}
    \put(22,24){$P_1$}
    \put(93,27){$P_2$}
    \end{overpic}
    \caption{The lattice $L_\alpha$ for $\alpha = \frac{1}{1+}\,\frac{1}{2+}\,\frac{1}{1+}\,\frac{1}{3+}\,\frac{1}{1+}\,\frac{1}{4+}\cdots$. Dark lattice points correspond to the regular convergents of $\alpha$. The points $P_0=(1,-\alpha), P_1 = (4,1-4\alpha)$, and $P_2 = (19, 5-19\alpha)$ give best approximations for the $2$-norm.}
    \label{fig:alg}
\end{figure}

\begin{lemma}\label{alg1}
Fix a strongly symmetric norm $F$
and an  irrational  $\a\in (-\frac 12, \frac 12)$. There is a  sequence $1=t_{-1}\leq t_0< t_1< t_2< \ldots$ tending to $\infty$ and for each $m=-1,0,1,\dots $  there is a $P_m =(x_m,y_m)\in L_\a$ 
with the following properties. For each $m\geq 0$
\begin{enumerate}[label=(\roman*)]
\item $ x_{m}>x_{m-1}$ and $ |y_m|<|y_{m-1}|$,
\item  $\{P_{m-1},P_m\}$ gives a minimal basis  for $L_\a$ with respect to $F_{t_m}$,
	\item 	for any $t\in (t_{m-1},t_{m}) $  there is  no $P'=(x',y')\in L_\a$ different from $P_{m-1}$ with $x'>0$  and
		\[
		F_t(P') \leq F_t(P_{m-1}).
		\]

	\end{enumerate}
 \end{lemma}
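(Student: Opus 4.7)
The plan is to construct the sequence $(P_m, t_m)$ inductively. Setting $t_{-1} = 1$, $P_{-1} = (0,1)$, and $P_0 = (1, -\a)$, I take $t_0 \geq 1$ to be the smallest parameter at which $F_{t_0}(P_0) \leq F_{t_0}(P_{-1}) = t_0$; existence of $t_0$ follows from $F_t(1,-\a) = F(1/t, t|\a|) \sim t|\a|$ as $t \to \infty$, and the base case $m = 0$ is then verified directly using $|\a|<\tfrac{1}{2}$ and monotonicity of $F$ in each coordinate. For $m \geq 1$, I define
\begin{equation*}
t_m = \inf\bigl\{\, t > t_{m-1} :\ \exists\, P' = (x',y') \in L_\a,\ P' \neq P_{m-1},\ x' > 0,\ F_t(P') \leq F_t(P_{m-1}) \,\bigr\}
\end{equation*}
and take $P_m$ to be a witnessing lattice point.

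First I would show that $t_m$ is finite and attained. Finiteness follows from the Minkowski approximation theorem applied to $\a$, which yields lattice points $(q, p - \a q) \in L_\a$ whose $F_t$-norm becomes smaller than $F_t(P_{m-1})$ for $t$ sufficiently large. Attainment follows from the discreteness of $L_\a$: for $t$ in a compact neighbourhood of $t_{m-1}$, only finitely many lattice points can satisfy $F_t(P') \leq F_t(P_{m-1})$. Property (iii) is immediate from the infimum definition. For (ii), the minimality of $F_{t_m}(P_{m-1}) = F_{t_m}(P_m)$ over all of $L_\a \setminus \{0\}$ reduces, via $F_t(P') = F_t(-P')$ (strong symmetry), to the $x' > 0$ case (controlled by the infimum) together with the $y$-axis points $(0, b)$, for which $F_t(0,b) = t|b|$; these are ruled out by inductively tracking the inequality $F_{t_m}(P_{m-1}) \leq t_m$, which begins with the base case equality $F_{t_0}(P_0) = t_0$. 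The spanning property then follows from Lemma~\ref{lb} once we know $P_{m-1}, P_m$ are linearly independent, which in turn will follow from (i).

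The main obstacle is verifying (i), the coordinate monotonicity $x_m > x_{m-1}$ and $|y_m| < |y_{m-1}|$. Since $\{P_{m-2}, P_{m-1}\}$ and $\{P_{m-1}, P_m\}$ are both $\Z$-bases of $L_\a$, we may write $P_m = \ep P_{m-2} + b P_{m-1}$ with $\ep = \pm 1$ and $b \in \Z$. The inductive bound $0 < x_{m-2} < x_{m-1}$ forces $x_m > x_{m-1}$ in all but a short list of borderline cases where $P_m \in \{\pm P_{m-2},\, P_{m-1} - P_{m-2}\}$; each of these candidates is excluded because the property (iii) of the \emph{previous} step guarantees $F_{t}(P_{m-2}) > F_{t}(P_{m-1})$ for $t > t_{m-1}$, so $F_{t_m}$ of the candidate strictly exceeds $F_{t_m}(P_{m-1})$, contradicting the definition of $t_m$. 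The inequality $|y_m| < |y_{m-1}|$ is then forced by monotonicity of $F$ in each coordinate absolute value (a standard consequence of $F(x,y) = F(|x|,|y|)$, proved by writing $(a,c) = \lambda(b,c) + (1-\lambda)(-b,c)$ for $0 \leq a \leq b$): were $|y_m| \geq |y_{m-1}|$, combined with $x_m > x_{m-1}$ we would have $F_{t_m}(P_m) \geq F_{t_m}(P_{m-1})$, with strict inequality in the generic case contradicting equality at $t_m$; the residual degenerate situation (when a flat face of $\B$ passes through both $P_{m-1}$ and $P_m$) can be handled by perturbing $t$ slightly past $t_m$ and using (iii) of the next step to rule out the tie.

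Finally, $t_m \to \infty$ follows from the strict monotonicity established in (i): any finite accumulation point of $(t_m)$ would force infinitely many distinct lattice points $P_m$ inside a bounded region, contradicting the discreteness of $L_\a$.
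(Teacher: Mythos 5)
Your construction is, in outline, the paper's: $t_m$ is the first time after $t_{m-1}$ at which some other lattice point with positive first coordinate catches up to $P_{m-1}$ in $F_t$-norm (the paper phrases this as the maximal $t$ for which $L_\a$ is admissible for $\B(P_{m-1},t)$ and gets finiteness from Minkowski's first convex body theorem rather than the approximation theorem, a cosmetic difference), the basis property comes from Lemma~\ref{lb}, and $t_m\to\infty$ follows from the growth of $x_m$. Your inductive tracking of $F_{t_m}(P_m)\le t_m$ to dispose of the $y$-axis points is a reasonable substitute for the paper's admissibility formulation.

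The genuine gap is in the choice of $P_m$, and with it the proofs of (i) and of the strict inequality $t_{m+1}>t_m$. You take $P_m$ to be ``a witnessing lattice point,'' but several lattice points can lie on the critical sphere $\{F_{t_m}=F_{t_m}(P_{m-1})\}$ simultaneously, and an arbitrary choice can stall the construction. The base case already illustrates the phenomenon: for the sup-norm and $\a\in(0,\tfrac12)$, both $(1,-\a)$ and $(1,1-\a)$ tie with $P_{-1}=(0,1)$ at $t_0=1$, and had one chosen $(1,1-\a)$, then $F_t(1,-\a)\le F_t(1,1-\a)$ for \emph{all} $t$ by Lemma~\ref{l2}, so the next infimum equals $t_0$ and the induction cannot continue. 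You fix $P_0=(1,-\a)$ by fiat (as does the paper), but for $m\ge 1$ you give no selection rule, and your two patches do not close the step: the exclusion of the borderline candidates $\pm P_{m-2}$ and $P_{m-1}-P_{m-2}$ invokes property (iii), which at the endpoint $t=t_m$ yields only the non-strict inequality $F_{t_m}(P_{m-2})\ge F_{t_m}(P_{m-1})$ --- precisely the case you must rule out --- and the ``degenerate flat-face situation'' is dispatched by appealing to (iii) \emph{of the next step}, which is circular, since that property only becomes available after $P_m$ has been correctly chosen and $t_{m+1}>t_m$ established. (There is a smaller circularity of the same kind between (i) and (ii): you obtain the basis property from Lemma~\ref{lb} via linear independence ``which will follow from (i),'' and then prove (i) using that $\{P_{m-1},P_m\}$ is a basis; independence should instead be derived directly from $F_{t_m}(P_m)=F_{t_m}(P_{m-1})$ together with $x_m,x_{m-1}>0$.) The paper supplies exactly the missing device: among the lattice points on the critical sphere at $t_m$ it always selects the one of maximal $x$-coordinate (well defined since $\a$ is irrational), and it is this choice that simultaneously yields $x_m>x_{m-1}$, $|y_m|<|y_{m-1}|$ and guarantees $t_{m+1}>t_m$. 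Without such a rule, or an argument that every witness has these properties, the inductive step does not close.
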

\begin{proof}

Consider for $P=(x,y)  \in L_\a$ the ball
\begin{equation}\label{BB}
	\B(P,t)\defeq\{P'\in \R^2 ; F_t(P')< F_t(P)\},
\end{equation}
for which
$\mathrm{area}\,\B(P,t)=F^2_t(P)\,\mathrm{area}\,\B.$
Suppose that $L_\a$ is admissible for $\B(P,t)$.
Now by Lemma \ref{l2}
\[
F_t(P)=F(t^{-1}x,ty)\geq F(0,ty)=t|y|.
\]
Thus, as long as $y\neq 0$, by Minkowski's first convex body theorem there will be a maximal $t'\geq t$ for which $L_\a$ is admissible for  $\B(P,t')$.
For any of the resulting $P'\neq -P$ with $F_{t'}(P')=F_{t'}(P)$, we have by Lemma \ref{lb} that $\{P,P'\}$ gives a minimal basis for $L_\a$ with respect to $F_{t'}.$

Let $P_{-1}=(0,1)$. Then  $L_\a$ is admissible for $\B(P_{-1},1)$. Let $t_0\geq 1$ be  maximal for which $L_\a$ is admissible for  $\B(P_{-1},t_0)$.
Our assumption that $\a\in (-\frac 12,\frac 12)$ implies that 
we can take $P_0 = (1,-\alpha)$ as a solution to $F_{t_0}(P_0)=F_{t_0}(P_{-1})$.

Now $L_\a$ is admissible for $\B(P_{0},t_0)$ and so we find $t_1>t_0$ maximal so that $L_\a$ is admissible for  $\B(P_{0},t_1)$.
That $t_1>t_0$ with strict inequality is assured by our choice of $P_0$. 
Among the finitely many $P'=(x',y')\in L_\a$ with $F_{t_1}(P')=F_{t_1}(P_0)$ there will be unique one   with maximal $x'$ since $\a$ is irrational.
We let  $P_1=(x_1,y_1)$ be this point.  Clearly $x_1>x_0$ and $|y_1|<|y_0|$.

We continue this process to construct $t_m$ and $P_m$.
That we have $t_m>t_{m-1}$ is guaranteed by choosing among the new points on the boundary the one with maximal $x$-coordinate.
From the form of $L_\a$, where $\a$ is irrational, it follows  that $x_{m}>x_{m-1}$ and $ |y_{m-1}|>|y_{m}|>0$ for each $m \geq 0$
  and that this process never terminates.

We will have all the stated properties of $t_m$ and $P_m$ once  we show  that $t_m\rightarrow \infty$. 
We have by Lemma \ref{l2} and Minkowski's first convex body theorem again that for each $m \geq 0$
\[
x_{m}t_m^{-1} =F(x_{m}t_m^{-1},0)\leq  F(x_{m}t_m^{-1},y_{m}t_m)=F_{t_m}(P_m)\leq 2(\mathrm{area}\,\B)^{-\frac 12}.
\]
Thus $t_m\gg x_m \rightarrow \infty$ as $x_m>x_{m-1}$ are integers.
\end{proof}

\subsection*{Proof of Theorem \ref{genl}}

Observe that $\B(P_m,t_m)$ as defined by (\ref{BB}), with $P_m$ and $t_m$ from Lemma \ref{alg1},  contains a parallelogram of area 2
since $\{P_{m-1},P_m\}$ is a minimal basis for $L_\a$ with respect to $F_{t_m}.$
Therefore
\[
\Delta F_{t_m}^2(P_m) \geq 2\Delta(\mathrm{area}\,\B)^{-1}\geq \tfrac{1}{2},
\]
where to get the second inequality we have  applied Minkowski's bound 
(\ref{mi2}). Since $t_m \rightarrow \infty$ as $n \rightarrow \infty$ 
we have that
\begin{equation}\label{dellim}
 \delta_F(\a)\geq \Delta \limsup_{m \rightarrow \infty}F^2_{t_m}( P_m) \geq \tfrac{1}{2},
 \end{equation}
thus proving Theorem \ref{genl}.
\qed

\section{The continued fraction}\label{CF}

Next we relate to each other  the two definitions of best approximation given in and below Definition \ref{def1}.
\begin{lemma}\label{bap}
If the fraction $\frac pq $ with $q>0$ is a best approximation of an irrational $\a$ with respect to a strongly symmetric norm $F$
 then it is a best approximation in the usual sense. Conversely,  if $\frac pq $ with $q>0$ is a best approximation of an irrational $\a$ 
 in the usual sense it is a best approximation with respect to the sup-norm. 
\end{lemma}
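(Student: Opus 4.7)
The plan is to exploit the coordinate-monotonicity that strong symmetry of $F$ confers on the first quadrant, then handle each direction as a short contradiction argument. Preliminarily I would record the fact that whenever $F$ is strongly symmetric and $|x_1| \leq |x_2|$, $|y_1| \leq |y_2|$, one has $F(x_1, y_1) \leq F(x_2, y_2)$. Assuming by (\ref{SS}) that all coordinates are nonnegative, this follows by writing
\begin{equation*}
(x_1, y_1) = \sum_{\eta_1, \eta_2 \in \{\pm 1\}} \tfrac{(1+\eta_1 x_1/x_2)(1+\eta_2 y_1/y_2)}{4}\, (\eta_1 x_2, \eta_2 y_2)
\end{equation*}
as a convex combination, using $F(\pm x_2, \pm y_2) = F(x_2, y_2)$ from strong symmetry, and applying the triangle inequality.

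For the forward direction, let $t > 1$ be the witness from Definition \ref{def1} attached to $\frac{p}{q}$. Given any $\frac{r}{s} \neq \frac{p}{q}$ with $0 < s \leq q$, suppose for contradiction that $|r - \alpha s| \leq |p - \alpha q|$. Then $t^{-1} s \leq t^{-1} q$ and $t|r - \alpha s| \leq t|p - \alpha q|$, so coordinate-monotonicity yields $F_t(s, r - \alpha s) \leq F_t(q, p - \alpha q)$, contradicting the defining strict inequality. Hence $|p - \alpha q| < |r - \alpha s|$, which is the usual best approximation property.

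For the converse, I would assume $\frac{p}{q}$ is a usual best approximation to $\alpha$ and set $t_0 = \sqrt{q/|p - \alpha q|}$, chosen so that $t_0^{-1} q = t_0|p - \alpha q| = \sqrt{q|p - \alpha q|}$, which is the sup-norm $F_{t_0}(q, p - \alpha q)$. A short preliminary check confirms $t_0 > 1$: either $q \geq 2$, in which case comparing against the nearest-integer fraction $(p_1, 1) \neq (p, q)$ gives $|p - \alpha q| \leq \tfrac{1}{2} < 1 \leq q$, or $q = 1$ and $p$ itself is the nearest integer, so $|p - \alpha| \leq \tfrac{1}{2} < 1 = q$. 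For any $\frac{r}{s} \neq \frac{p}{q}$ with $s > 0$: if $s > q$ then $t_0^{-1} s > t_0^{-1} q = \sqrt{q|p - \alpha q|}$, and if $s \leq q$ then usual best approximation gives $|r - \alpha s| > |p - \alpha q|$ and hence $t_0|r - \alpha s| > \sqrt{q|p - \alpha q|}$. In either case $F_{t_0}(s, r - \alpha s)$ strictly exceeds $F_{t_0}(q, p - \alpha q)$.

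The only mild obstacle is the verification $t_0 > 1$ in the converse; once that is in place, each direction collapses to a one-step contradiction argument built on coordinate-monotonicity of strongly symmetric norms.
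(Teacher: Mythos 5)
Your proof is correct and follows essentially the same route as the paper: the forward direction is the coordinate-monotonicity argument (the paper's Lemma~\ref{l2}, which you reprove via your convex-combination identity), and the converse uses the same choice of $t$ balancing $t^{-1}q = t|p-\alpha q|$ with the same case split $s\leq q$ versus $s>q$. Your explicit verification that $t>1$ is a detail the paper merely asserts, so no issues there.
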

\begin{proof}
Suppose that there is a $t> 1$ such that $\frac rs \neq \frac pq$ with $s>0$ implies
\[
F_t(q,p-\a q) < F_t(s,r-\a s).
\]
If $s\leq q$ then $|r-\a s|> |p-\a q|$ by Lemma \ref{l2}.

Conversely, suppose that $\frac rs\neq \frac pq$ and $0<s\leq q$ implies that $|p-\a q|< |r-\a s|$.  Choose $t$ such that $t^{-1}q=t|p-\a q|$
and note that such a $t > 1.$

If $0<s\leq q$ and $\frac rs\neq \frac pq$   then 
\[
\sup(q t^{-1},|p-\a q |t)=t|p-\a q|< t|r-\a s|\leq \sup(st^{-1},|r-\a s|t).
\]
If $s> q$ then
\[
\sup(q t^{-1},|p-\a q|t)=t^{-1}q<t^{-1}s\leq \sup(st^{-1},|r-\a s|t).
\]
This finishes the proof.
\end{proof}

\subsection*{ Proof of Theorem \ref{tcf}}

 For any  $\a\in \R$ write $\a =\a'+a_0$, where $a_0 \in \Z$ and $\a'\in (-\frac12 , \frac12].$
Suppose that $\a$ is irrational. In the notation of Lemma \ref{alg1} (taking there $\a=\a'$)
for $m \geq -1$  write $P_m=(x_m,y_m)$.
For $m\geq 0$ define \[g_m=\pMatrix{x_m}{y_m}{x_{m-1}}{y_{m-1}}\]
and set $g_{-1}=\pmatrix{0}{1}{1}{-\a}$.
We know by Lemma \ref{alg1} that for each $m\geq 1$ there is a positive integer $a_m$ and $\ep_n=\pm 1$ so that
\begin{equation}\label{gn1}
g_m= \pMatrix{a_m}{\ep_m}{1}{0}g_{m-1}.
\end{equation}
This also holds for $m=0$ if we set $\ep_0=1.$ Clearly for $m \geq 0$
\begin{equation}\label{lam}
	\g_m\defeq\det{g_m}=(-1)^{m}\ep_1\cdots\ep_m.
\end{equation}

The numerator $p_m$ and denominator $q_m$ of the convergents 
\[
\frac{p_m}{q_m}=a_0+\frac{\ep_1}{a_1+}\,\frac{\ep_2}{a_2+}\cdots\frac{\ep_m}{a_m}
\]
of our continued fraction  are determined recursively for $m \geq 0$ through
\begin{alignat}{3}\label{r1}
p_m&=a_m p_{m-1}+\ep_mp_{m-2}, \qquad & p_{-1}&=1, \quad & p_{-2}&=0,
\\ \label{r2}q_m&=a_m q_{m-1}+\ep_m q_{m-2}, & q_{-1}&=0, & q_{-2}&=1.
\end{alignat}
It is easy to see that for $m \geq 0$
\begin{equation}\label{cf1}
 \pMatrix{p_{m}}{q_{m}}{p_{m-1}}{q_{m-1}}=\pMatrix{a_m}{\ep_m}{1}{0}\pMatrix{a_{m-1}}{\ep_{m-1}}{1}{0}\cdots \pMatrix{a_0}{1}{1}{0}.
\end{equation}
By  (\ref{gn1}) and (\ref{cf1}) for each $m\geq 0$
we get that
\begin{align}\label{gn}
g_m=\pMatrix{x_m}{y_m}{x_{m-1}}{y_{m-1}}=\pMatrix{q_m}{p_m-\a q_{m}}{q_{m-1}}{p_{m-1}-\a q_{m-1}}.
\end{align}

By Lemma \ref{alg1}  the basis of rows of $g_m$ is minimal for the norm $F_{t_m}$. 
Choose any $t\in(t_m,t_{m+1})$. By (iii) of Lemma \ref{alg1}
 for any $\frac rs \neq \frac{p_m}{q_m}$ we have
\begin{equation}\label{deteq}
F_t(q_m,p_m-\a q_m)
<
F_t(s,r-\a s). 
\end{equation}
Conversely, suppose that $\frac rs$ is a best approximation to $\alpha$ with respect to $F$, and write $Q=(s,r-\alpha s)$.
Then for some $t> 1$, we have $F_t(Q)\leq F_t(P_m)$ for all $m\geq 0$. 
By  Lemma~\ref{alg1} it cannot happen that $t\in (1,t_0)$ since in that case we would have to have $Q=(0,1)$ and so $s=0$. Also we cannot have that $t=t_m$ for any $m\geq 0.$
On the other hand, if $m$ is such that $t\in (t_m, t_{m+1})$, then by Lemma~\ref{alg1} we have $Q=P_m$.
It follows that the convergents are precisely the best approximations to $\alpha$ with respect to $F$.

That  the continued fraction converges to $\a$ now follows from the first statement of  Lemma~\ref{bap} and Lagrange's theorem mentioned below Theorem \ref{tcf},  since they imply 
 that each convergent of our continued fraction is a convergent of the regular continued fraction.
It remains to show that it is semi-regular. Since  $\a$ is irrational  we need only show that 
$\ep_{m+1}+ a_{m}\geq 1$ for all $m\geq1$.
This will follow once we relate the $\mu_m,\nu_m$ from (\ref{unvn}) to the points $P_m=(x_m,y_m)$,
which is also needed 
to prove our generalization of  (\ref{Del}). 
 \begin{lemma}\label{l1}
For $x_m,y_m$ from (\ref{gn}) and $\mu_m,\nu_m$ from (\ref{unvn})  we have for $m\geq 0$ that
\begin{equation}\label{thi}
\mu_m=-\frac{y_m}{y_{m-1}}\;\;\;\mathrm{and}\;\;\;\;\nu_m=\frac{x_{m-1}}{x_m}.
\end{equation}
\end{lemma}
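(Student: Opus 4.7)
The plan is to translate the matrix recursion (\ref{gn1}) directly into three–term recurrences for the coordinates $x_m, y_m$ of $P_m$, and then verify that $x_{m-1}/x_m$ and $-y_m/y_{m-1}$ satisfy the same recurrences (and initial conditions) as $\nu_m$ and $\mu_m$ respectively. Concretely, reading off the top row of $g_m = \pMatrix{a_m}{\epsilon_m}{1}{0} g_{m-1}$, or equivalently combining (\ref{r1}), (\ref{r2}) with the identification $(x_m,y_m) = (q_m, p_m - \alpha q_m)$ from (\ref{gn}), yields
\begin{equation*}
x_m = a_m x_{m-1} + \epsilon_m x_{m-2}, \qquad y_m = a_m y_{m-1} + \epsilon_m y_{m-2}.
\end{equation*}

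For the claim about $\nu_m$, I would set $\tilde\nu_m := x_{m-1}/x_m$ and divide the $x$–recurrence by $x_{m-1}$ to obtain
\begin{equation*}
\tilde\nu_m = \frac{x_{m-1}}{a_m x_{m-1} + \epsilon_m x_{m-2}} = \frac{1}{a_m + \epsilon_m\,\tilde\nu_{m-1}},
\end{equation*}
which is exactly the recurrence defining $\nu_m$ in (\ref{unvn}). The base case $\tilde\nu_0 = x_{-1}/x_0 = 0 = \nu_0$ follows from $q_{-1} = 0$, $q_0 = 1$, and then a straightforward induction on $m$ yields $\tilde\nu_m = \nu_m$ for all $m \geq 0$.

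For $\mu_m$, I would use the standard tail identity for the semi–regular continued fraction, namely
\begin{equation*}
\alpha = \frac{(a_m + \mu_m)\,p_{m-1} + \epsilon_m\, p_{m-2}}{(a_m + \mu_m)\,q_{m-1} + \epsilon_m\, q_{m-2}},
\end{equation*}
which is just (\ref{cf1}) with the last factor $\pMatrix{a_m}{\epsilon_m}{1}{0}$ replaced by $\pMatrix{a_m + \mu_m}{\epsilon_m}{1}{0}$, together with the convergence of the continued fraction to $\alpha$ proved in Theorem~\ref{tcf}. Cross–multiplying and collecting the $\alpha q_j - p_j = -y_j$ terms gives
\begin{equation*}
(a_m + \mu_m)\,y_{m-1} = -\epsilon_m\, y_{m-2},
\end{equation*}
and substituting the recurrence $y_m = a_m y_{m-1} + \epsilon_m y_{m-2}$ yields $\mu_m = -y_m/y_{m-1}$.

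The only genuine obstacle is bookkeeping at $m = 0$, where the definition of $\mu_0$ must be read in the same spirit as $u_0 = \alpha - a_0$ in (\ref{unvnp}) (so that the identity matches $-y_0/y_{-1} = \alpha - a_0$ with $y_{-1} = 1$, $y_0 = a_0 - \alpha$). Otherwise the lemma is a routine consequence of the matrix recursion and the tail identity, and no further input is needed.
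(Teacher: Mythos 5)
Your argument is correct and essentially the paper's own: the $\nu_m$ part is proved there by the identical recurrence-and-induction on $q_{m-1}/q_m$, and your tail identity is precisely the paper's (\ref{cq})--(\ref{alp}) (the complete quotient $\alpha_m=a_m+\mu_m$ substituted into (\ref{cf1})), which the paper solves to get (\ref{anp}) rather than cross-multiplying against the three-term recurrence for $y_m$. Your $m=0$ bookkeeping, reading $\mu_0$ as $\alpha-a_0$, also matches the convention in the paper's proof of Theorem \ref{tcf}, where the algorithm is applied to $\alpha'=\alpha-a_0$.
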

\begin{proof}
The proof is an adaptation to more general continued fractions of standard arguments used for regular continued fractions (see \cite{Sch1}).

To start with, by  (\ref{gn}) 
\begin{equation}\label{xs}
-\frac{y_m}{y_{m-1}}=\frac{-p_m+\a q_m}{p_{m-1}-\a q_{m-1}}.
\end{equation}
By (\ref{cf1}) and (\ref{lam}) we have
\begin{equation}\label{det}
q_{m+1}p_{m}-p_{m+1} q_{m}=\g_{m+1}.
\end{equation}

Together with (\ref{r1}) and (\ref{r2}), this yields the following formal identity between rational functions with variables $a_1,\dots, a_{m+1}$:
\begin{equation}\label{lehe}
p_m-q_m\, \frac{p_{m+1}}{q_{m+1}}= \frac{\g_{m+1}}{a_{m+1}q_m+\ep_{m+1}q_{m-1}}\;\;\;\;\mathrm{where}\;\;\;\;\frac{p_{m+1}}{q_{m+1}}=a_0+\frac{\ep_1}{a_1+}\,\frac{\ep_2}{a_2+}\cdots\frac{\ep_{m+1}}{a_{m+1}}.
\end{equation}
The $m^{th}$ {\it complete quotient} $\a_m$ of the expansion $\a=\frac{\ep_1}{a_1+}\,\frac{\ep_2}{a_2+}\cdots
$
is defined recursively  by $\a_0=\a$ and for $m \geq 0$ through
\begin{equation*}
\a_{m+1}=\frac{\ep_{m+1}}{\a_m-a_m}.
\end{equation*}
It follows that for $m \geq 0$ we have
\begin{equation}\label{cq}
\a=a_0+\frac{\ep_1}{a_1+}\,\frac{\ep_2}{a_2+}\cdots\frac{\ep_{m+1}}{\a_{m+1}}.
\end{equation}
By (\ref{lehe}) upon setting the variable $a_{m+1}=\a_{m+1}$ and using (\ref{cq}) we derive that
\[
p_m-q_m\a=\frac{\g_{m+1}}{\a_{m+1}q_m+\ep_{m+1}q_{m-1}}.
\]
Next solve this equation for $\a_{m+1}$ and use (\ref{det}) with $m$ in place of $m+1$ to get
\begin{equation}\label{anp}
\a_{m+1}=\frac{\ep_{m+1}(-p_{m-1}+q_{m-1}\a)}{p_m-q_m\a}.
\end{equation}
From  (\ref{cq})  we have 
\begin{equation}\label{alp}
\a_{m+1}= a_{m+1}+\frac{\ep_{m+2}}{a_{m+2}+}\,\frac{\ep_{m+3}}{a_{m+3}+}\cdots.
\end{equation}
so by (\ref{unvn}) 
\begin{equation}\label{umal}
\mu_m=\frac{\ep_{m+1}}{\a_{m+1}}.
\end{equation}
The first formula of (\ref{thi}) now follows from  (\ref{xs}) and (\ref{anp}).

To prove the second formula of (\ref{thi}) start with $\frac{q_{m-1}}{q_m}=\frac{x_{m-1}}{x_m}$ from (\ref{gn}).
By (\ref{unvn}) we have that $v_0=0$ while for $m \geq 0$
\[
\nu_{m+1}=\frac{1}{a_{m+1}+\ep_{m+1}\nu_m}.
\]
Using (\ref{r2}) we see that $\frac{q_{m-1}}{q_m}$ satisfies the same recurrence.
\end{proof}

We now finish the proof of Theorem \ref{tcf} by showing that our expansion is semi-regular.
Suppose that we had $\ep_{m+1}=-1$ and $a_{m}=1$ for some $m \geq 1.$
We would then have from (\ref{alp}) that $\a_{m}<1$
and so from (\ref{umal}) that 
\[|\mu_{m-1}|=|\tfrac{y_{m-1}}{y_{m-2}}|>1,\]
which is impossible.
This completes the proof of Theorem \ref{tcf}.\qed

\section{A formula for $\d_F(\a)$}\label{form}
We will deduce Theorem \ref{t6} from a formula for $\d_F(\a)$ for any strongly symmetric norm $F$ given in terms of the quantities $\mu_m,\nu_m$.
As usual, we may identify the space of all  lattices of determinant one with $ \G\backslash G$ where $G=\SL(2,\R)$ and $\G=\SL(2,\Z)$
by means of 
\begin{equation}\label{lofq}
g=\pmatrix{x}{y}{x'}{y'}\mapsto L(g)\defeq(x,y)\Z+(x',y')\Z.
\end{equation}
%Clearly we have that
%\begin{equation}\label{mud}
%\Delta^{-1}=\sup \{c>0; L(g) \;\;\text{is admissible for}\;\; c\B\;\;\text{for all}\;\; g\in G\}.
%\end{equation}
Let $\mathcal{D}$ be the set of $g=\pmatrix{x}{y}{x'}{y'}\in G$  such that
\begin{align}
&F(x,y)=F(x',y')\;\;\mathrm{and}\\
&0\leq x'<x \;\;\mathrm{and}\;\; |y|<y'.
\end{align}
For $g\in \mathcal{D}$ let $F(g)=F(x,y).$
\begin{lemma}\label{cont}
 The map $\Phi:\mathcal{D}\rightarrow (-1,1)\times[0,1)$  given by
\[
\Phi \pmatrix{x}{y}{x'}{y'} =(-\tfrac{y}{y'},\tfrac{x'}{x})
\]  is a continuous bijection.
\end{lemma}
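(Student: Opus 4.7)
I would verify continuity, show that $\Phi$ maps into $(-1,1)\times[0,1)$, and then construct an explicit continuous inverse.

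Continuity of $\Phi$ and the image constraint are both immediate: the defining inequalities $0\leq x'<x$ and $|y|<y'$ force $x>0$ and $y'>0$, so the formulas for $\Phi$ have no singularities, while $|{-y/y'}|=|y|/y'<1$ and $x'/x\in[0,1)$ give the required target constraints.

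To construct the inverse, fix $(u,v)\in(-1,1)\times[0,1)$ and look for $g\in\mathcal{D}$ with $\Phi(g)=(u,v)$. Any such $g$ has $y=-uy'$ and $x'=vx$; combined with $\det g=1$ this gives $xy'(1+uv)=1$, so $xy'=1/(1+uv)$ (note $1+uv>0$ since $|uv|<1$). Using strong symmetry, the norm equality $F(x,y)=F(x',y')$ becomes $F(x,|u|y')=F(vx,y')$; dividing both sides by $y'$ and setting $r:=x/y'$, this reduces to
\[
F(r,|u|)=F(vr,1). \qquad (*)
\]
Any positive $r$ solving $(*)$ yields $x=\sqrt{r/(1+uv)}$, $y'=1/\sqrt{r(1+uv)}$, $y=-uy'$, and $x'=vx$, with all inequalities defining $\mathcal{D}$ following automatically from $|u|<1$ and $v\in[0,1)$. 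So the proof reduces to showing $(*)$ has a unique positive solution.

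Existence is immediate by IVT applied to $\psi(r):=F(r,|u|)-F(vr,1)$: one has $\psi(0)=|u|-1<0$, while $\psi(r)/r\to 1-v>0$ as $r\to\infty$ (using $F(r,|u|)/r\to F(1,0)=1$ and $F(vr,1)/r\to F(v,0)=v$). Uniqueness rests on the following geometric fact deducible from strong symmetry and the normalization $F(0,\pm 1)=F(\pm 1,0)=1$: the boundary $\partial\B$ contains no horizontal segment at any height $b$ with $|b|<1$ and no vertical segment at any width $a$ with $|a|<1$, since either would make the corresponding coordinate line a supporting hyperplane of $\B$, contradicting the presence of $(0,\pm 1)$ or $(\pm 1,0)$ on the opposite side. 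Consequently, for $b\in[0,1)$ the map $r\mapsto F(r,b)$ is strictly increasing wherever $F(r,b)>b$, with the analogous statement for the second variable. Since any solution of $(*)$ has common value $c:=F(r,|u|)=F(vr,1)\geq F(0,1)=1>|u|$, both strict monotonicities apply at level $c$; a short case analysis on whether two putative solutions have $c_1=c_2$ or $c_1<c_2$ then shows that a second solution would force a forbidden horizontal segment of $\partial\B$ in the scaled picture, giving the desired contradiction. Continuous dependence of $r$ on $(u,v)$ via strict monotonicity of $\psi$ then yields continuity of $\Phi^{-1}$.

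The main obstacle will be the uniqueness step, whose heart is the no-segment property of $\partial\B$; existence, the reduction to $(*)$, and continuity are essentially formal.
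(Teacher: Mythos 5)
Your construction of the inverse is exactly the paper's: your parameter $r$ is $t^{-2}$ in the paper's formula (\ref{inver}), your equation $(*)$ is the paper's condition $F_t(1,-u)=F_t(v,1)$ after clearing homogeneity, and your IVT existence argument is essentially the existence half of the paper's Lemma \ref{lll}. Continuity, the image constraints, the reduction via $\det g=1$, and existence are all fine.

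The gap is in the uniqueness step, which is also where the paper puts the real content (the uniqueness half of Lemma \ref{lll}). Your strict monotonicity of $r\mapsto F(r,|u|)$ at levels above $|u|$ does give $c_1<c_2$ for two roots $r_1<r_2$, but the remaining case $c_1<c_2$ cannot be reduced to ``a forbidden horizontal segment of $\partial\B$'': no contradiction follows from the no-segment property together with the two strict monotonicities. To see why, note that every ingredient you invoke --- homogeneity, strong symmetry, the monotonicity of Lemma \ref{l2}, the normalization, and the absence of horizontal (resp.\ vertical) segments of the level curves at heights (resp.\ widths) strictly below the level --- already holds for the gauge of any star body $\{(x,y): |y|\leq 1,\ |x|\leq w(|y|)\}$ with $w$ continuous, strictly decreasing, $w(0)=1$, whether or not $w$ is concave. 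For such a gauge the roots of $(*)$ with value $c>1$ are parametrized by the solutions of $w(1/c)/w(|u|/c)=v$, and if $w$ is not concave this ratio need not be monotone in $c$, so it can hit the same $v<1$ twice (e.g.\ $w=e^{-g}$ with $g$ increasing, $g(\tfrac14)=0$, $g(\tfrac12)=1$, $g(1)=\tfrac32$, $|u|=v=\tfrac12$ gives two distinct roots). Hence no argument using only your listed facts can prove uniqueness; convexity of $\B$ must enter across levels, not merely through the no-segment property. A correct route is to use concavity of the width function $w(y)=\max\{x: F(x,y)\leq 1\}$ (equivalently convexity of $\B$): then $\log w$ is concave, so $s\mapsto w(s)/w(|u|s)$ is non-increasing and can repeat a value $<1$ at no two points, which is exactly what kills the case $c_1<c_2$; alternatively one can simply prove the statement the paper isolates in Lemma \ref{lll}, namely uniqueness of the $t$ with $F_t(1,-u)=F_t(v,1)$.
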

\begin{proof}
The  inverse of $\Phi$ is given by 
\begin{equation}\label{inver}
(u,v)\mapsto \tfrac{1}{\sqrt{1+uv}}\pmatrix{t^{-1}}{-ut}{t^{-1}v}{t}.
\end{equation}
By Lemma \ref{lll} we see that  $t=t(u,v)>0$ exists and is uniquely determined by the condition $F_t(1,-u)=F_t(v,1).$
\end{proof}
The function 
\begin{equation}\label{Duv}
D_F(u,v)\defeq F^2(\Phi^{-1}(u,v))
\end{equation}
 is easily seen to be continuous on $(-1,1)\times[0,1)$.

The following is our generalization of the formula (\ref{Del}).
\begin{lemma}\label{tl6}
Fix a $F$ strongly symmetric norm.
For any irrational $\a$ whose continued fraction associated to the norm 
is (\ref{srcf1}) we have that
\[
\d_F(\a)=\limsup_{ m\rightarrow \infty}\D\,D_F\big(\mu_m,\nu_m \big),\]
where $\mu_m,\nu_m$ are given above in (\ref{unvn}). 
\end{lemma}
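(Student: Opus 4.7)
The plan is to identify $\delta_F(\alpha)$ with the limit superior of
\[
m(t) \defeq \Delta \inf_{\substack{p,q \in \Z \\ q > 0}} F_t^2(q, p-\alpha q)
\]
as $t \to \infty$, and then to read off this limsup from the algorithm of Lemma \ref{alg1}. Unwinding the definition of $\delta_F$ gives $\delta_F(\alpha) = \limsup_{t \to \infty} m(t)$ directly. Translating so that $\alpha \in (-\tfrac12, \tfrac12)$ and applying Lemma \ref{alg1}, property (iii) implies that for $t \in (t_m, t_{m+1})$ the infimum is attained uniquely at $P_m=(x_m,y_m)$, so by continuity $m(t) = \Delta F_t^2(P_m)$ throughout $[t_m, t_{m+1}]$.

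Next I would identify $F_{t_m}^2(P_m)$ with $D_F(\mu_m, \nu_m)$. Scale the rows of $g_m$ by $(t_m^{-1}, t_m)$ to form
\[
\tilde g_m = \begin{pmatrix} t_m^{-1} x_m & t_m y_m \\ t_m^{-1} x_{m-1} & t_m y_{m-1} \end{pmatrix},
\]
and then use strong symmetry (which lets us negate any column or row without affecting $F$-values) to arrange that the lower-right entry is positive and $\det \tilde g_m = 1$. Property (ii) of Lemma \ref{alg1} gives the $F$-equality of the two rows; property (i) together with the irrationality of $\alpha$ gives $0 \leq t_m^{-1} x_{m-1} < t_m^{-1} x_m$ and $|t_m y_m| < t_m |y_{m-1}|$, so $\tilde g_m \in \mathcal{D}$. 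By Lemma \ref{l1},
\[
\Phi(\tilde g_m) = \Bigl(-\tfrac{y_m}{y_{m-1}},\; \tfrac{x_{m-1}}{x_m}\Bigr) = (\mu_m, \nu_m),
\]
and therefore $F_{t_m}^2(P_m) = F^2(\tilde g_m) = D_F(\mu_m, \nu_m)$.

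Combined with the previous paragraph, this shows $m(t_m) = \Delta D_F(\mu_m, \nu_m)$, giving the easy direction $\limsup_t m(t) \geq \Delta \limsup_m D_F(\mu_m, \nu_m)$. For the reverse inequality it is enough to establish, for each $m \geq 0$ and $t \in [t_m, t_{m+1}]$,
\[
F_t^2(P_m) \leq \max\bigl(F_{t_m}^2(P_m),\; F_{t_{m+1}}^2(P_m)\bigr),
\]
since $F_{t_{m+1}}^2(P_m) = F_{t_{m+1}}^2(P_{m+1}) = D_F(\mu_{m+1}, \nu_{m+1})$ by the minimal basis property at $t_{m+1}$ together with the identification just established. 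The displayed inequality is the main technical step: by strong symmetry $F_t(P_m) = F(t^{-1} x_m, t|y_m|)$, and for $a, b > 0$ the function $t \mapsto F(a/t, bt)$ is quasi-convex on $(0, \infty)$. To see this, note that as $t$ varies over $[t_1, t_2]$ the point $(a/t, bt)$ traces a convex arc of the hyperbola $xy = ab$ that lies componentwise below the chord joining its endpoints. Monotonicity of $F$ in each coordinate (a consequence of strong symmetry together with the triangle inequality) and then convexity of the norm bound the value of $F$ on the hyperbola by the maximum of its two endpoint values. Taking $\limsup$ over $m$ then completes the proof.

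The main obstacle is this quasi-convexity step: it is what bridges the discrete parameters $t_m$, where everything is pinned down by a minimal basis and the bijection $\Phi$, and the continuous parameter $t$ appearing in the definition of $\delta_F$.
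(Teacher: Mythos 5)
Your proposal is correct and follows essentially the same route as the paper: the identification $F_{t_m}^2(P_m)=D_F(\mu_m,\nu_m)$ via $\Phi$, Lemma \ref{l1} and the minimal-basis property, the lower bound from the values at the times $t_m$, and the interval bound $F_t(P_m)\leq\max\bigl(F_{t_m}(P_m),F_{t_{m+1}}(P_m)\bigr)$ together with $F_{t_{m+1}}(P_m)=F_{t_{m+1}}(P_{m+1})$. Your quasi-convexity argument for $t\mapsto F(a/t,bt)$ is just a geometric restatement of the paper's Lemma \ref{ll2} (convexity of $t\mapsto F_t(P)$, proved from Lemma \ref{l2} and convexity of $1/t$), so it is the same key step in different language.
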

\begin{proof}
Fix an $m$ and write as before $P_m=(x_m,y_m)$.   Let
\[
\Phi^{-1}(\mu_m,\nu_m)=\pmatrix{x}{y}{x'}{y'}
\]
where $0\leq x'<x$ and $  |y|<y'$.
Recall that by (i) of Lemma \ref{alg1} we know that
\[
0\leq x_{m-1}<x_{m}\quad \text{ and } \quad |y_m|<|y_{m-1}|.
\]
 Lemmas  \ref{cont} and \ref{l1} now imply that
\[x=t_m^{-1}x_m,\;\;x'=t_m^{-1}x_{m-1}, \;\; y=\g_m t_m y_m, \;\;y'=\g_mt_m y_{m-1},\]
where $\g_m=\pm 1$ was defined in (\ref{lam}).  Note that in this case   $\g_m=\sgn{y_{m-1}}.$
  By strong symmetry of the norm we have  
 $F_{t_m}(x,y)=F_{t_m}(x',y')=F_{t_m}(P_m)$.
Hence
\[
F^2_{t_m}(P_m)=F^2(\Phi^{-1}(\mu_m,\nu_m))=D_F(\mu_m,\nu_m).
\]

Now we need to show that
\begin{equation}\label{dell}
 \delta_F(\a)= \Delta \limsup_{m \rightarrow \infty} F^2_{t_m}(P_m).
 \end{equation}
For $t\geq 1$ let $m(t)$ be such that $t_{m(t)}\leq t \leq t_{m(t)+1}$.
By Lemma \ref{alg1} we have that 
\begin{equation*}
 \delta_F(\a)\leq  \Delta \limsup_{t \rightarrow \infty} \|F_{m(t)}\|^2_{t},
 \end{equation*}
and by Lemma~\ref{ll2} we have that
\begin{equation*}
	F_t(P_m) \leq \max\left( F_{t_m}(P_m),F_{t_{m+1}}( P_m) \right) \quad \text{ if } \quad t_m\leq t\leq t_{m+1}.
\end{equation*}
Now apply the first inequality in (\ref{dellim}) to establish (\ref{dell}) and therefore
finish the proof of Lemma \ref{tl6}.
\end{proof}

\subsection*{Proof of Theorem \ref{t6}}

To conclude formula (\ref{delta}) from Lemma \ref{tl6},
first observe that for the $p$-norm with  $1\leq p<\infty$ we have from (\ref{inver}) that for $(u,v)\in (-1,1)\times[0,1)$ the value of $t$ that makes   the rows of 
$\Phi^{-1}(u,v)$ have the same norm $F_t$ is given by 
\[
t=\Big(\frac{1-v^p}{1-|u|^p}\Big)^{\frac{1}{2p}}.
\]
The corresponding value of $ D_{F^{\langle p\rangle}}(u,v)$ from (\ref{Duv}) is 
\begin{align*}
D_{F^{\langle p\rangle}}(u,v)=(1+uv)^{-1}\Big(\big(\tfrac{1-v^p}{1-|u|^p}\big)^{-\frac12}+|u|^p\big(\tfrac{1-v^p}{1-|u|^p}\big)^{\frac12}\Big)^{\frac2p}\\
=(1+uv)^{-1}\Big( \tfrac{(1-|u|^pv^p)^2}{(1-|u|^p)(1-v^p)}\Big)^{\frac1p}=D_p(u,v),
\end{align*}
giving  (\ref{delta}).  The case $p=\infty$ is immediate.
This completes the proof of Theorem \ref{t6}.\qed

\section{$\mathcal S$-expansions} \label{sec:s-exp}

To prove Theorem \ref{sexp} we want to characterize in terms of the norm those convergents of the regular continued fraction 
of an irrational $\a$ that are also convergents of the continued fraction of $\a$ associated to a strongly symmetric norm $F$. We will use the notation and results of  Lemma \ref{alg1}.
Write $P_m=(q_m,p_m-\a q_m )$ for points coming from this norm  with corresponding $t_m$ and let $Q_n=(s_n,r_n-\a s_n )$ be the points coming from the convergents  of the regular continued fraction of $\a$.
Furthermore, the partial quotient $b_n$ is associated to $Q_n$ while $a_m$  is associated to $P_m$.

\begin{lemma}\label{char0}
For a fixed $n\geq 0$ there are  integers $c_\ell$ and $d_\ell$ with $c_\ell>0$ and $d_\ell\geq 0$ so that for each $\ell\geq 0$ 
\begin{equation*}
Q_{n+\ell}=c_\ell Q_n+d_\ell Q_{n-1}
\end{equation*}
where $c_\ell\geq d_\ell $ for all $\ell\geq 0$, while for $\ell \geq 2$ we have
\[
c_\ell\geq d_\ell +1.
\]

 \end{lemma}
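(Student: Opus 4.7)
The plan is to induct on $\ell$ using the standard regular continued fraction recurrence. Recall that the regular convergents satisfy $r_{n+1} = b_{n+1} r_n + r_{n-1}$ and $s_{n+1} = b_{n+1} s_n + s_{n-1}$, so by linearity the lattice points $Q_n = (s_n, r_n - \alpha s_n)$ obey
\begin{equation*}
Q_{n+\ell} = b_{n+\ell}\, Q_{n+\ell-1} + Q_{n+\ell-2} \qquad (\ell \geq 1).
\end{equation*}
Writing $Q_{n+\ell} = c_\ell Q_n + d_\ell Q_{n-1}$ (well defined since $\{Q_n,Q_{n-1}\}$ is $\R$-linearly independent for irrational $\alpha$), this recurrence translates into the pair of scalar recurrences
\begin{equation*}
c_\ell = b_{n+\ell} c_{\ell-1} + c_{\ell-2}, \qquad d_\ell = b_{n+\ell} d_{\ell-1} + d_{\ell-2},
\end{equation*}
with initial data $c_0 = 1$, $d_0 = 0$, $c_1 = b_{n+1}$, $d_1 = 1$.

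First I would verify $c_\ell > 0$ and $d_\ell \geq 0$: an immediate induction, since $b_{n+\ell} \geq 1$ and the base data are nonnegative with $c_0, c_1 > 0$. Next, to handle the inequalities, I would study the difference $e_\ell \defeq c_\ell - d_\ell$, which satisfies the same recurrence $e_\ell = b_{n+\ell} e_{\ell-1} + e_{\ell-2}$ with $e_0 = 1$ and $e_1 = b_{n+1} - 1 \geq 0$. Thus $e_\ell \geq 0$ for all $\ell$, proving $c_\ell \geq d_\ell$.

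For the strict inequality $c_\ell \geq d_\ell + 1$ when $\ell \geq 2$, the base case is $e_2 = b_{n+2}(b_{n+1} - 1) + 1 \geq 1$. For $\ell \geq 3$, the recurrence gives
\begin{equation*}
e_\ell = b_{n+\ell} e_{\ell-1} + e_{\ell-2} \geq e_{\ell-2} \geq 1,
\end{equation*}
where the last step uses the inductive hypothesis when $\ell - 2 \geq 2$ and the already established $e_1 \geq 0$ combined with $b_{n+\ell} e_{\ell-1} \geq 1 \cdot e_2 \geq 1$ when $\ell = 3$. Either way the conclusion follows.

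There is no real obstacle here; the lemma is essentially a bookkeeping exercise about the regular continued fraction recurrence applied to the vectors $Q_n$. The only subtlety is ensuring the base cases of the induction cover $\ell = 0, 1, 2$ separately because the inequality strengthens at $\ell = 2$, and handling $\ell = 3$ carefully since $e_1$ can vanish when $b_{n+1} = 1$.
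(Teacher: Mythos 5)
Your proposal is correct and follows essentially the same route as the paper: derive the recurrences $c_\ell = b_{n+\ell}c_{\ell-1}+c_{\ell-2}$, $d_\ell = b_{n+\ell}d_{\ell-1}+d_{\ell-2}$ with the stated initial data from the regular convergent recurrence, then induct. The paper leaves the induction as a "straightforward inductive argument," and your bookkeeping with $e_\ell = c_\ell - d_\ell$ (including the careful $\ell=2,3$ base cases when $b_{n+1}=1$) supplies exactly those details.
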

\begin{proof}
The integers $r_n,s_n$ are determined recursively for $n\geq 0$ by
\begin{alignat}{3}
\label{r00}
r_n&=b_n r_{n-1}+r_{n-2}, \qquad &r_{-2}&=0, \quad &r_{-1}&=1,
\\
\label{r0}
s_n&=b_n s_{n-1}+s_{n-2}, \qquad &s_{-2}&=1,\quad &s_{-1}&=0.
\end{alignat}

It is easy to check using (\ref{r00}) and (\ref{r0}) that $c_\ell$ and $d_\ell$ satisfy  for fixed $n$ and $\ell\geq 1$ the recurrence relations
\begin{alignat}{3}
\label{eq:cl-rec}
c_{\ell+1}&=b_{n+\ell+1}  c_{\ell}+c_{\ell-1},\qquad &c_{1}&=b_{n+1},\quad &c_{0}&=1,
\\ 
\label{eq:dl-rec}
d_{\ell+1}&=b_{n+\ell+1} d_{\ell}+d_{\ell-1},\qquad &d_{1}&=1,\quad & d_{0}&=0.
\end{alignat}
The claim of the lemma follows from a straightforward  inductive argument.
  \end{proof}

The following result will be used to characterize those convergents of the regular continued fraction that 
occur as convergents in the continued fraction associated to the norm.
\begin{lemma} \label{lem:ell=1}
For $m \geq 1$ let $n$ and $\ell$ be such that $P_{m-1}=Q_{n-1}$  and $P_{m}=Q_{n+\ell}$.
Then 
\begin{enumerate}[label=(\roman*)]
	\item $\ell\in \{0,1\}$.
	\item There is a unique  $t\geq 1$ such that
		$F_t(Q_n)=F_t(Q_{n-1})$,
	and $\ell=1$ if and only if 
	\[
		F_t(Q_{n}+Q_{n-1})\leq F_t(Q_{n}).
	\]
	If this holds we have that $b_{n+1}=1.$
	\item  $Q_0=P_0$ if and only if $a_0=b_0.$
\end{enumerate}
\end{lemma}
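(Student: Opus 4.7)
The hypothesis $P_{m-1}=Q_{n-1}$, $P_m=Q_{n+\ell}$ with $\ell\geq 0$ is automatic: Lemma \ref{bap} shows that every $F$-best approximation is a best approximation in the usual sense, and by Lagrange's theorem it is a regular convergent. For part (i) I plan to rule out $\ell\geq 2$ using Lemma \ref{char0}: for $\ell\geq 2$ one has $Q_{n+\ell}=c_\ell Q_n+d_\ell Q_{n-1}$ with $c_\ell\geq d_\ell+1\geq 2$, so $c_\ell Q_n=P_m-d_\ell P_{m-1}$. By Lemma \ref{alg1}(ii) together with Lemma \ref{lb}, $\{P_{m-1},P_m\}$ is a $\Z$-basis of $L_\alpha$. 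Writing $Q_n=aP_{m-1}+bP_m$ with $a,b\in\Z$ and equating coefficients with the previous identity forces $c_\ell b=1$, contradicting $c_\ell\geq 2$; hence $\ell\in\{0,1\}$.

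For part (ii), existence and uniqueness of $t^\ast\geq 1$ with $F_{t^\ast}(Q_n)=F_{t^\ast}(Q_{n-1})$ will follow from the same intermediate-value and monotonicity argument underlying Lemma \ref{lll}, using $s_n>s_{n-1}$ and $|y_n|<|y_{n-1}|$. Since $F_{t^\ast}(Q_n)=F_{t^\ast}(Q_{n-1})$, the balls $\B(Q_n,t^\ast)$ and $\B(Q_{n-1},t^\ast)$ coincide, so by Lemma \ref{lb}, $\{Q_{n-1},Q_n\}$ is an $F_{t^\ast}$-minimal basis iff $L_\alpha$ is admissible for this common ball. I then run the dichotomy of the algorithm of Lemma \ref{alg1} starting at $P_{m-1}=Q_{n-1}$: admissibility at $t^\ast$ forces $P_m=Q_n$ and $\ell=0$, while its failure forces $t_m<t^\ast$ and, by the Lagrange constraint noted above, $P_m=Q_{n+1}$ and $\ell=1$. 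A case analysis over lattice combinations $aQ_{n-1}+bQ_n$---using strong symmetry to reduce to $a,b\geq 0$, the sign alternation of $y_{n-1},y_n$, and the monotonicity of $F$ from Lemma \ref{l2}---identifies $\pm(Q_n+Q_{n-1})$ as the only candidate for a non-basis lattice point that can lie inside $\B(Q_n,t^\ast)$, so admissibility fails iff $F_{t^\ast}(Q_n+Q_{n-1})\leq F_{t^\ast}(Q_n)$. Finally, $\ell=1$ forces $b_{n+1}=1$: the regular recurrence $Q_{n+1}=b_{n+1}Q_n+Q_{n-1}$ gives $Q_n=\tfrac{1}{b_{n+1}}(P_m-P_{m-1})$, and since $Q_n$ has integer coordinates in the $\Z$-basis $\{P_{m-1},P_m\}$, we must have $b_{n+1}=1$.

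Part (iii) is immediate from the construction in Lemma \ref{alg1}: writing $\alpha=a_0+\alpha'$ with $\alpha'\in(-\tfrac12,\tfrac12]$, one has $P_0=(1,a_0-\alpha)$, while $Q_0=(s_0,r_0-\alpha s_0)=(1,b_0-\alpha)$, so $Q_0=P_0$ iff $a_0=b_0$. The main technical obstacle is the lattice-enumeration step in part (ii): establishing that, among the nontrivial combinations $aQ_{n-1}+bQ_n$ other than $\pm Q_{n-1}$ and $\pm Q_n$, the minimum of $F_{t^\ast}$ is attained at $\pm(Q_n+Q_{n-1})$. This requires carefully exploiting the specific sign and magnitude pattern of $(s_{n-1},y_{n-1})$ and $(s_n,y_n)$ together with strong symmetry and the monotonicity lemmas in the appendix; everything else reduces either to algebra in the two bases or to direct invocations of results established earlier.
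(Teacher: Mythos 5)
Your parts (i) and (iii) are correct, and your argument for (i) is actually a clean alternative to the paper's: you write $Q_n=aP_{m-1}+bP_m$ in the $\Z$-basis $\{P_{m-1},P_m\}$ (Lemma~\ref{alg1}(ii)) and read off $c_\ell b=1$, whereas the paper instead bounds $F_{t_m}(P_m-d_\ell P_{m-1})$ using Lemma~\ref{equal} to get $c_\ell<d_\ell+1$; both give $\ell\in\{0,1\}$ and, in the case $\ell=1$, $b_{n+1}=c_1=1$. (For the existence of the crossing parameter, note that your cited facts $s_n>s_{n-1}$, $|y_n|<|y_{n-1}|$ are not quite enough by themselves; you also need the normalization (\ref{normal}) to see $F(Q_n)\geq s_n\geq s_{n-1}+|y_{n-1}|\geq F(Q_{n-1})$ at $t=1$, or, as the paper does, apply Lemma~\ref{lll} to $F_{t_m}$ using minimality of the basis.)

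The genuine gap is in part (ii), precisely at the ``lattice-enumeration step'' you flag but do not carry out. Your criterion --- admissibility of $L_\a$ for the common ball at $t^\ast$ fails if and only if $F_{t^\ast}(Q_n+Q_{n-1})\leq F_{t^\ast}(Q_n)$, because $\pm(Q_n+Q_{n-1})$ is the only candidate non-basis point inside the ball --- cannot be established from the sign/magnitude pattern of $Q_{n-1},Q_n$, strong symmetry and the appendix lemmas alone. When $b_n=1$ the point $Q_n-Q_{n-1}=Q_{n-2}$ has \emph{smaller} $x$-coordinate and larger $|y|$ than $Q_{n-1}$, so Lemma~\ref{l2} gives nothing; and already for the $2$-norm, two points of equal $F_{t^\ast}$-norm with exactly the stated sign/magnitude pattern and determinant $\pm1$ can subtend an acute angle, in which case $F_{t^\ast}(Q_n-Q_{n-1})<F_{t^\ast}(Q_n)<F_{t^\ast}(Q_n+Q_{n-1})$. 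Excluding such configurations requires the additional dynamical input that $Q_{n-1}=P_{m-1}$ is the strict lattice minimum for all $t\in(t_{m-1},t_m)$ (Lemma~\ref{alg1}(iii)) combined with a single-crossing argument in $t$, i.e.\ genuinely more than you invoke; and general combinations $aQ_{n-1}+bQ_n$ are not reducible to $a,b\geq0$ by strong symmetry, since sign changes in $(a,b)$ alter the coordinate magnitudes. A second, sharper problem is the equality case: if $F_{t^\ast}(Q_n+Q_{n-1})=F_{t^\ast}(Q_n)$ then the open ball is still admissible, so your dichotomy outputs $\ell=0$, while the lemma asserts $\ell=1$ (the algorithm breaks the tie by choosing the boundary point of maximal $x$-coordinate). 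The paper avoids enumeration altogether: for $\ell=1$ it uses $Q_{n+1}=Q_n+Q_{n-1}=P_m$ together with minimality of the basis at $t_m$ (and the crossing uniqueness to pass from $t_m$ to $t$), and for $\ell=0$ it uses the maximal-$x$ choice in Lemma~\ref{alg1} to force the strict inequality $F_{t_m}(P_m+P_{m-1})>F_{t_m}(P_m)$. As written, your plan for (ii) does not close.
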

\begin{proof}

We know that $P_{-1}=Q_{-1}$ and that for each $m \geq 1$ we have $P_{m-1}=Q_{n-1}$ for some $n$ and $P_{m}=Q_{n+\ell}$ for some $\ell\geq 0.$ We can check directly that $Q_0=P_0$ if and only if $a_0=b_0.$

By Lemma \ref{char0} for $\ell \geq 0$ we have
\begin{equation}\label{inn}
c_\ell Q_n =P_{m}-d_\ell P_{m-1}.
\end{equation}
By Lemma \ref{alg1}
we have $F_{t_m}(Q_n) \geq F_{t_m}( P_m)=F_{t_m}(P_{m-1})$ and hence
\begin{equation}\label{ini}
c_\ell F_{t_m}(P_m) \leq F_{t_m}(P_{m}-d_\ell P_{m-1})< F_{t_m}(P_{m})+d_\ell F_{t_m}(P_{m-1}).
\end{equation}
 by Lemma \ref{equal}.
Thus  we have 
\begin{equation} \label{eq:cldl}
c_\ell < d_\ell+1.
\end{equation}
so by Lemma \ref{char0} we  have 
 that either $\ell=0$ or $\ell=1.$

Now by Lemma \ref{lll} applied to the norm $F_{t_m}$ and using that
\[
F_{t_m}(Q_n)\geq F_{t_m}(Q_{n-1}),
\]
there is a $t\geq 1$ (indeed $t\geq t_m$) so that
\[
 F_t(Q_n)=F_t(Q_{n-1}).
\]

In case $\ell=1$ we have $b_{n+1}=1$ by \eqref{eq:cldl} and \eqref{eq:cl-rec}--\eqref{eq:dl-rec}. 
By (\ref{inn}) we have that
\[
 Q_n+Q_{n-1} =P_{m}=Q_{n+1},
\]
so by Lemma \ref{alg1}
we must have
\[
F_t(Q_n+Q_{n-1})=F_t(Q_{n+1})=F_t(P_m) \leq  F_t(Q_n).
\]

If $\ell=0$ we have $Q_{n-1}=P_{m-1}$ and $Q_{n}=P_m$ so that $t=t_m$ and 
\[
F_t(Q_n+Q_{n-1})=F_{t_m}(P_m+P_{m+1})>F_{t_m}(P_m)=F_t(Q_{n}),
\]
at least when $m\geq 0$, since then the $x$-coordinate of $P_m+P_{m+1}$ is strictly larger than that of
$P_m$ and so by Lemma \ref{alg1} strict inequality must hold.
\end{proof}

\subsection*{Proof of Theorem \ref{sexp}}

Lemma~\ref{lem:ell=1} gives instructions for obtaining the sequence of convergents ${p_m}/{q_m}$ of $\alpha$ associated to the norm $F$ from the sequence of regular convergents ${r_n}/{s_n}$ of $\alpha$, namely
\begin{equation} \label{eq:sing-rule}
	\text{omit the regular convergent }\mfrac{r_n}{s_n} \ (n\geq 1) \iff F_t(Q_n+Q_{n-1}) \leq F_t(Q_n),
\end{equation}
where $t\geq 1$ is such that $F_t(Q_n)= F_t(Q_{n-1})$, and
\begin{equation} \label{eq:sing-rule-0}
	\text{omit }\mfrac{r_0}{s_0} \iff \lfloor \alpha\rfloor \text{ is not the nearest integer to $\alpha$} \iff \alpha\in [\tfrac 12,1) +\Z.
\end{equation}
We must define a singularization area that encodes both of these instructions.
Let $\mathcal D$ and $\Phi$ be as in Section~\ref{form}.
For each $g=\pmatrix{x}{y}{x'}{y'} \in \mathcal D$ we write
\begin{equation*}
	P = (x,y) \quad \text{ and } \quad P'=(x',y'),
\end{equation*}
and we define
\begin{equation}
	\mathcal S = \Phi\left( \{g\in \mathcal D : F(P + P') \leq F(P)\} \right) \cup \left(\left[\tfrac 12, 1\right)\times \{0\}\right). \label{eq:S-def}
\end{equation}
The portion of $\mathcal S$ that lies on the $u$-axis encodes the rule \eqref{eq:sing-rule-0}.
Suppose that $n\geq 1$ and let $\pmatrix{x}{y}{x'}{y'} = \Phi^{-1}( u_n,v_n)$.
Then Lemmas~\ref{cont} and \ref{l1} imply that $Q_n=(tx,t^{-1}y)$ and $Q_{n-1}=(tx',t^{-1}y')$, with $t$ defined by $F_t(Q_n) = F_t(Q_{n-1})$.
So the condition on the right-hand side of \eqref{eq:sing-rule} is equivalent to $F(P+P')\leq F(P)$.
It follows that \eqref{eq:sing-rule}--\eqref{eq:sing-rule-0} are encoded by the rule
\begin{equation} \label{eq:sing-rule-S}
	\text{omit the regular convergent }\mfrac{r_n}{s_n} \iff ( u_n, v_n) \in \mathcal S.
\end{equation}

It is helpful to have some more concrete information about the set $\mathcal S$.
For a generic norm it is difficult to describe $\mathcal S$ explicitly, so we will relate $\mathcal S$ to the set $\mathcal S_1$, which is easy to describe.
As usual, we denote $\mathcal S$ by $\mathcal S_p$ when $F$ is the $p$-norm.
We have
\begin{equation} \label{eq:S1-def}
	\mathcal S_1 = \left\{ (u,v)\in [\tfrac 12,1)\times [0,1] ;\, v\leq 2-\tfrac 1u \right\},
\end{equation}
as one can see by reducing the system of equations and inequalities
\begin{equation*}
	|x+x'|+|y+y'|\leq |x|+|y| = |x'|+|y'|, \quad x>x'\geq 0, \quad y'>|y|, \quad xy'-yx'=1
\end{equation*}
defining $\mathcal S_1$.
The interior of the set \eqref{eq:S1-def} agrees with the $S$-region given in \cite{Kra2} for Minkowski's diagonal continued fraction.

\begin{lemma} \label{lem:1-norm}
For any strongly symmetric norm $F$ we have $\mathcal S\subseteq \mathcal S_1$.
\end{lemma}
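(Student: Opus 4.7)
The plan is to verify the two pieces of $\mathcal{S}$ separately. The piece $[\tfrac 12,1)\times\{0\}$ already lies in $\mathcal{S}_1$, since at $v=0$ the condition $v\le 2-\tfrac 1u$ is just $u\ge\tfrac 12$. The main work is to show that if $(u,v)=\Phi(g)$ comes from some $g=\pmatrix{x}{y}{x'}{y'}\in\mathcal{D}$ with $F(P+P')\leq F(P)$, then $u\in[\tfrac 12,1)$ and $v\leq 2-\tfrac 1u$; the containment $v\in[0,1)$ is automatic from $\Phi(\mathcal{D})\subseteq(-1,1)\times[0,1)$.

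First I would prove $u\geq\tfrac 12$. Writing $P=(x,-uy')$, $P'=(vx,y')$ (so $x,y'>0$) and $P+P'=((1+v)x,(1-u)y')$, if $u<\tfrac 12$ then $1-u>|u|$ forces $(1-u)y'>|u|y'=|y|$, while $(1+v)x\geq x$. The basic monotonicity of a strongly symmetric norm --- namely, that $|a'|\geq|a|$ and $|b'|\geq|b|$ implies $F(a',b')\geq F(a,b)$, a direct consequence of strong symmetry together with the convexity of $\B$ --- then gives $F(P+P')>F(P)$, contradicting the hypothesis.

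To obtain $v\leq 2-\tfrac 1u$, I would use the concavity of the first-quadrant boundary of $c\B$, where $c=F(P)=F(P')$. Define $f\colon[0,c]\to[0,c]$ by $f(a)=\sup\{b\geq 0:F(a,b)\leq c\}$; convexity of $c\B$ restricted to the first quadrant makes $f$ concave and nonincreasing, and in the generic strictly convex situation strong symmetry places the two points $(x,uy')$ and $(vx,y')$ on the graph of $f$, so that $f(x)=uy'$ and $f(vx)=y'$. The hypothesis $F(P+P')\leq c$ first forces $(1+v)x\leq c$ (since $F((1+v)x,0)=(1+v)x\leq F(P+P')\leq c$) and then $(1-u)y'\leq f((1+v)x)$. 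Because $(1+v)x$ sits strictly to the right of both chord endpoints $vx<x$, concavity of $f$ gives the extrapolation bound
\[
f\bigl((1+v)x\bigr)\leq y'+\bigl((1+v)x-vx\bigr)\cdot\frac{uy'-y'}{x-vx}=\frac{y'(u-v)}{1-v}.
\]
Combining this with $(1-u)y'\leq f((1+v)x)$ and cancelling $y'>0$ yields $(1-u)(1-v)\leq u-v$, which rearranges to $v\leq 2-\tfrac 1u$.

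The step requiring the most care is the concavity extrapolation: the fact that $(1+v)x$ lies \emph{outside} the chord's $a$-interval $[vx,x]$ is exactly what allows the concave function $f$ to be bounded \emph{above} by the extended chord, giving the inequality in the direction we want. A secondary subtlety is handling strongly symmetric norms whose boundary has flat portions (the sup-norm being the extreme case), where $(x,uy')$ need not coincide with $(x,f(x))$; one accommodates this by replacing the equality $f(x)=uy'$ with the inequality $f(x)\geq uy'$ and working with a supporting line at $(x,uy')$ in place of the concavity step. Otherwise the argument uses only convexity and monotonicity features shared by every strongly symmetric norm, with no input about the particular shape of $\B$.
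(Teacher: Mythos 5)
Your strategy---verifying the two inequalities in the explicit description \eqref{eq:S1-def} directly from the definition \eqref{eq:S-def} by convexity of the unit ball---is genuinely different from the paper's proof (which passes to a dense subset of $\mathcal S$, realizes each such point as a pair $(u_n,v_n)$ of an actual irrational, and compares $F$-balls with $1$-norm balls along the algorithm of Lemma~\ref{alg1}), and it can be pushed through; but as written it has a real gap, located exactly at the norms the paper cares most about (the $1$-norm, the sup-norm and the octagonal norms, none of which is strictly convex). In Step 1 you quote the monotonicity of a strongly symmetric norm in its correct non-strict form and then conclude the strict inequality $F(P+P')>F(P)$; non-strict monotonicity (Lemma~\ref{l2}) only yields $F(P+P')\geq F(P)$, which is compatible with the hypothesis $F(P+P')\leq F(P)$, so no contradiction results. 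The strictness genuinely fails in equality cases: for the sup-norm the matrix $g=\pmatrix{1}{-1/3}{0}{1}$ lies in $\mathcal D$, has $F(P+P')=F(1,\tfrac 23)=1=F(P)$, and $\Phi(g)=(\tfrac 13,0)$, an equality point with $u<\tfrac 12$. Such points sit on the segment $v=0$, where the paper's convention \eqref{eq:sing-rule-0} and the explicit piece $[\tfrac 12,1)\times\{0\}$ are what matter, so they must be excluded or treated separately; and for $v>0$, where your conclusion is correct, strictness still does not come from monotonicity alone---one needs an extra argument (e.g.\ that when both coordinates strictly increase, $P$ lies in the interior of the box with corners $(\pm(x+x'),\pm|y+y'|)$, so scaling along the ray through $P$ gives $F(P)<F(P+P')$).

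The second gap is in your treatment of flat boundaries in Step 2, which is the heart of the lemma rather than a side case. Replacing $f(x)=uy'$ and $f(vx)=y'$ by the inequalities $f(x)\geq uy'$ and $f(vx)\geq y'$ does not repair the chord extrapolation: in the bound $f((1+v)x)\leq \frac{f(x)-v f(vx)}{1-v}$ the term $f(x)$ enters with the wrong sign, so a lower bound on it gives nothing, and the desired inequality $(1-u)(1-v)\leq u-v$ no longer follows. The supporting-line idea can be made to work, but not as a drop-in substitute for the concavity step: take a supporting line $\lambda s+\mu t\leq \lambda x+\mu\,|u|y'$ of the ball $\{F\leq F(P)\}$ at the boundary point $(x,|u|y')$, with $\lambda,\mu\geq 0$ (possible by strong symmetry), and evaluate it at the two points $P'=(vx,y')$ and $P+P'$, both of which lie in that ball; for $v>0$ the two resulting linear inequalities combine (after first disposing of $u<0$, which the same evaluations rule out) to $1+uv\leq 2u$, i.e.\ $u\geq\tfrac 12$ and $v\leq 2-\tfrac 1u$, the degenerate vertical-support case with $v=0$ being precisely the exceptional points above. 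Until the strictness step and this flat-boundary argument are actually carried out, the proof is incomplete exactly where the work lies.
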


\begin{proof}
Since $\mathcal S$ and $\mathcal S_1$ are closed sets in the induced topology on $[\frac 12,1)\times [0,1]$, it suffices to show that a dense subset of $\mathcal S$ is contained in $\mathcal S_1$.
Suppose that $(u,v)\in \mathcal S$ with $u\notin \Q$ and $v\in \Q$, and write
\begin{equation}
u=\frac{1}{b_{n+1}+}\;\frac{1}{b_{n+2}+}\cdots\;\;\mathrm{and}\;\;v=\frac{1}{b_{n}+}\;\frac{1}{b_{n-1}+}\;\frac{1}{b_{n-2}+}\cdots \frac{1}{b_1}
\end{equation}
for the regular continued fractions of $u$ and $v$.
If we define
\begin{equation*}
	\alpha = \frac{1}{b_{1}+}\;\frac{1}{b_{2}+}\;\frac{1}{b_{3}+}\cdots
\end{equation*}
then $(u,v)=(u_n,v_n)$ for $\alpha$.
Since $(u,v)\in \mathcal S$ we have $Q_{n+1} = Q_{n-1}+Q_n$ in the notation of Section~\ref{sec:s-exp}, and for some $m$ we have $P_{m-1}=Q_{n-1}$ and $P_m = Q_{n+1}$.
Thus
\begin{equation*}
	F_{t_m}(Q_{n-1})= F_{t_m}(Q_{n+1}) \leq F_{t_m}(Q_n).
\end{equation*}
% Without loss of generality, assume that both $Q_{n-1}$ and $Q_{n+1}$ are in the first quadrant.
Let $t$ be such that $F^{\langle 1\rangle}_t(P_{m-1})=F_t^{\langle 1\rangle}(P_{m})$, where $F^{\langle 1\rangle}$ denotes the $1$-norm.
By convexity of $F$, the closed stretched ball $\overline{\mathcal B(P_m,t_m)}$ contains the line segment connecting the points $P_{m-1}$ and $P_m$.
This line segment comprises all of the points $P$ in the same quadrant as $Q_{n-1}, Q_{n+1}$ with $x$-coordinate between $x_{m-1}$ and $x_m$, and with  $F^{\langle 1\rangle}_t(P) = F^{\langle 1\rangle}_t(P_m)$.
Since the $x$-coordinate of $Q_n$ is between $x_{m-1}$ and $x_m$ and $Q_n$ is outside the ball $\mathcal B(P_m,t_m)$, we have 
\[ F_t^{\langle 1\rangle}(Q_n)\geq F_t^{\langle 1\rangle}(P_m)= F_t^{\langle 1\rangle}(Q_{n}+Q_{n-1}). \]
By \eqref{eq:sing-rule} and \eqref{eq:sing-rule-S} it follows that $(u,v)\in \mathcal S_1$.
\end{proof}

Lemma~\ref{lem:1-norm}, together with the explicit description \eqref{eq:S1-def}, shows that the set $\mathcal S$ is a singularization area as defined above Theorem \ref{sexp}.
This fact and \eqref{eq:sing-rule-S} together prove Theorem~\ref{sexp}.\qed

We also immediately obtain the following lemma, which we will use several times in the coming sections.

\begin{lemma}\label{line}
For every strongly symmetric norm $F$, there is a neighborhood of the line segment $u=v$ with $u,v\in (0,1)$ that does not intersect $\mathcal S$.
\end{lemma}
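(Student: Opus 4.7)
The plan is to invoke Lemma \ref{lem:1-norm}, which gives $\mathcal S \subseteq \mathcal S_1$ for every strongly symmetric norm $F$, and reduce the problem to checking that the open segment $D = \{(u,u) : u \in (0,1)\}$ admits an open neighborhood disjoint from $\mathcal S_1$. The advantage is that $\mathcal S_1$ has the explicit description \eqref{eq:S1-def}, so the whole question collapses to an elementary one-variable comparison.

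First I would verify $D \cap \mathcal S_1 = \emptyset$. For $u \in (0,\tfrac 12)$ this is immediate from $\mathcal S_1 \subseteq [\tfrac 12, 1) \times [0,1]$. For $u \in [\tfrac 12, 1)$, the identity
\[
u - \bigl(2 - \tfrac 1u\bigr) = \frac{(u-1)^2}{u}
\]
shows $u > 2 - 1/u$ strictly on $(0,1)$, so $(u,u)$ fails the inequality $v \leq 2 - 1/u$ cutting out $\mathcal S_1$.

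Next I would produce the neighborhood by noting that $\mathcal S_1$ is a relatively closed subset of the open square $(0,1)^2$: it is cut out there by the closed conditions $u \geq \tfrac 12$ and $v \leq 2 - 1/u$, the latter being closed in $(0,1)^2$ since $1/u$ is continuous away from $u=0$. Hence the complement $N = (0,1)^2 \setminus \mathcal S_1$ is an open subset of $(0,1)^2$ which, by the previous paragraph, contains $D$; and it is disjoint from $\mathcal S \subseteq \mathcal S_1$. If one prefers an explicit neighborhood, one can take $N = \{(u,v) \in (0,1)^2 : u < \tfrac 12\} \cup \{(u,v) \in (0,1)^2 : v > 2 - 1/u\}$, which is manifestly open.

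There is no real obstacle here; the lemma is essentially a corollary of Lemma \ref{lem:1-norm} and the clean formula for $\mathcal S_1$. Geometrically, the hyperbolic arc $v = 2 - 1/u$ bounding $\mathcal S_1$ lies strictly below the diagonal on $(0,1)$ and is tangent to it only at the excluded corner $(1,1)$. Consequently the separation between $D$ and $\mathcal S_1$ shrinks to zero as $u \to 1^-$, so one cannot hope for a uniform-width tubular neighborhood; but the statement only demands an open neighborhood, which the varying-width set $N$ supplies.
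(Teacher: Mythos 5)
Your proposal is correct and follows the paper's own route: the paper derives this lemma immediately from Lemma~\ref{lem:1-norm} together with the explicit description \eqref{eq:S1-def} of $\mathcal S_1$, which is exactly what you do, just with the diagonal computation $(u-1)^2/u>0$ and the openness of the complement spelled out. Your closing remark that the gap closes as $u\to 1^-$ (so no uniform-width neighborhood exists) is a correct and harmless observation.
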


We finish this section with a quick proof of our claim \eqref{d2} that
\begin{equation*}
	\delta_2\left(\tfrac 12(-1+\sqrt 3)\right) = 1.
\end{equation*}
The regular continued fraction expansion of $\alpha = \tfrac 12(-1+\sqrt 3)$ is
\begin{equation}
	\alpha = \frac{1}{2+} \; \frac{1}{1+} \; \frac{1}{2+} \; \frac{1}{1+}  \cdots,
\end{equation}
from which it follows that
\begin{equation*}
	u_n = 
	\begin{cases}
		\alpha & \text{ if $n$ is even}, \\
		2\alpha & \text{ if $n$ is odd},
	\end{cases}
\end{equation*}
while $v_{2n} \to 2\alpha$ from below and $v_{2n+1}\to \alpha$ from above.
The region $\mathcal S_2$ comprises those points $(u,v)$ for which $u (2 + v) > 1 + 2 v$.
The points $(u_n,v_n)$ are all outside $\mathcal S_2$, so
the $2$-continued fraction expansion of $\alpha$ is the same as the regular continued fraction and thus $(\mu_n,\nu_n) = (u_n,v_n)$.
Since $D_2(u,v)=D_2(v,u)$, we have
\begin{equation*}
	\delta_2(\alpha) = \Delta_2 \max_{k\in \{0,1\}} \lim_{n\to\infty} D(\mu_{2n+k},\nu_{2n+k}) =  \Delta_2 D(\alpha,2\alpha) = 1.
\end{equation*}

\section{Values of $\d_F(\a)$ for well approximable numbers}\label{bad}

We now prove Theorem \ref{genl2}, which gives the smallest value of $\d_F(\a)$ for  $F$ any strongly symmetric norm
and $\a$ well approximable. 
%\subsection*{Proof of Theorem \ref{genl2}}

\begin{lemma}\label{wa}
Suppose that $\a$ is well approximable.
Then $\d_F(\a)\geq \D.$
\end{lemma}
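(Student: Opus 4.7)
My plan is to derive Lemma \ref{wa} from the formula $\delta_F(\alpha) = \limsup_{m}\Delta\, D_F(\mu_m,\nu_m)$ of Lemma \ref{tl6}, by producing a subsequence $m_k$ along which $|\mu_{m_k}|\to 0$ and invoking the fact that $D_F(0,v)\geq 1$ for every $v\in[0,1)$.

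The subsequence comes from the well-approximability of $\alpha$. The regular partial quotients $b_n$ are unbounded, so there exist $n_k$ with $b_{n_k+1}\to\infty$, whence $u_{n_k}\leq 1/b_{n_k+1}\to 0$. For $k$ large enough we have $u_{n_k}<\tfrac12$, so $(u_{n_k},v_{n_k})\notin \mathcal S\subseteq [\tfrac12,1)\times[0,1]$ and Theorem \ref{sexp} gives $Q_{n_k}=P_{m_k}$ for some $m_k$. By Lemma \ref{lem:ell=1} the previous $F$-convergent $P_{m_k-1}$ is either $Q_{n_k-1}$ or $Q_{n_k-2}$, and the formula $\mu_m=-y_m/y_{m-1}$ from Lemma \ref{l1} together with $u_n = -(r_n-\alpha s_n)/(r_{n-1}-\alpha s_{n-1})$ gives $|\mu_{m_k}|\in\{u_{n_k},\,u_{n_k}u_{n_k-1}\}$. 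In either case $|\mu_{m_k}|\leq u_{n_k}\to 0$.

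The key ingredient $D_F(0,v)\geq 1$ is read off from Lemma \ref{cont}: the defining relation $F_t(1,0)=F_t(v,1)$ at $u=0$ simplifies to $F(v,t^2)=1$. Writing $f(v)\in [0,1]$ for the function with $F(v,f(v))=1$ describing the upper boundary of $\mathcal B$ in the first quadrant, this yields $t^2=f(v)$ and
\[
D_F(0,v) \;=\; F^2\bigl(\Phi^{-1}(0,v)\bigr) \;=\; t^{-2} \;=\; \tfrac{1}{f(v)}.
\]
The normalisation \eqref{normal} puts $\mathcal B$ inside the strip $|y|\leq 1$, so $f(v)\leq 1$ and $D_F(0,v)\geq 1$.

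To conclude, pass to a further subsequence with $\nu_{m_k}\to v^*\in[0,1]$. If $v^*<1$, continuity of $D_F$ on $(-1,1)\times[0,1)$ gives $D_F(\mu_{m_k},\nu_{m_k})\to 1/f(v^*)\geq 1$. The edge case $v^*=1$ is handled by the universal bound
\[
D_F(\mu_m,\nu_m) \;=\; F^2_{t_m}(P_m) \;\leq\; \tfrac{1}{\Delta},
\]
which holds because the rescaled lattice $F_{t_m}(P_m)^{-1}L_\alpha$ is admissible for $\mathcal B$ and so has determinant $1/F^2_{t_m}(P_m)\geq\Delta$: if $f(1)=0$ then $D_F(0,v)\to\infty$ as $v\to 1^-$, which is incompatible with the bounded sequence and forces $v^*<1$; if $f(1)>0$ then the bound $D_F(0,v)\geq 1$ persists under the continuous extension to $v=1$. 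In every case $\limsup D_F(\mu_m,\nu_m)\geq 1$, whence $\delta_F(\alpha)\geq \Delta$. I expect the principal obstacle to be the clean treatment of $\nu_m\to 1$, which the universal upper bound $D_F\leq 1/\Delta$ is perfectly tailored to resolve via admissibility of the rescaled lattice.
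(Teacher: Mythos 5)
Your route is genuinely different from the paper's. The paper proves Lemma \ref{wa} directly from the definition of $\delta_F$, with no continued-fraction machinery: given $\varepsilon>0$, pick $q$ with $|p/q-\alpha|<\varepsilon/q^2$, set $t=q$, and use strong symmetry (Lemma \ref{l2}) plus $\gcd$ considerations to show that every lattice point $(s,r-\alpha s)$ with $s>0$ has $F_t$-norm at least $1-\varepsilon'$; since such $t$ are arbitrarily large, $\delta_F(\alpha)\geq\Delta$. Your plan via Lemma \ref{tl6} and Theorem \ref{sexp} is workable, and its main steps do check out: $u_{n_k}\leq 1/b_{n_k+1}\to 0$ forces $(u_{n_k},v_{n_k})\notin\mathcal S\subseteq[\tfrac12,1)\times[0,1]$, hence $Q_{n_k}=P_{m_k}$; Lemma \ref{lem:ell=1} gives $P_{m_k-1}\in\{Q_{n_k-1},Q_{n_k-2}\}$, so $|\mu_{m_k}|\in\{u_{n_k},u_{n_k}u_{n_k-1}\}\to 0$; and at $u=0$ the defining relation reduces to $F(v,t^2)=1$, so $D_F(0,v)=t^{-2}\geq 1$ because the unit ball lies in $|y|\leq 1$.

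The genuine gap is in your treatment of the edge case $\nu_{m_k}\to 1$. The paper only asserts continuity of $D_F$ on $(-1,1)\times[0,1)$, and the corner $(0,1)$ is excluded; your argument when $f(1)=0$ only shows divergence of the one-variable section $D_F(0,v)$ as $v\to 1^-$, which does not by itself contradict boundedness of $D_F(\mu_{m_k},\nu_{m_k})$, since $\mu_{m_k}\neq 0$; likewise ``the bound persists under the continuous extension to $v=1$'' when $f(1)>0$ is asserted, not proved. What is needed is a two-variable statement at $(0,1)$. It is true and can be supplied: with $t=t(u,v)$ defined by $F_t(1,-u)=F_t(v,1)$ one has $D_F(u,v)=\frac{1}{1+uv}F^2(t^{-1},-ut)\geq\frac{\max(t^2,t^{-2})}{1+uv}$ (using $F(t^{-1},-ut)\geq t^{-1}$ and $F(t^{-1},-ut)=F(t^{-1}v,t)\geq t$), and any limit point $t_*\in(0,\infty)$ of $t$ along $(u,v)\to(0,1)$ satisfies $F(1,t_*^2)=1$, hence $t_*^2\leq f(1)\leq 1$. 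This yields $\liminf D_F\geq 1$ along your subsequence in all cases (when $f(1)=0$ there is no such $t_*$ and $D_F\to\infty$), so the universal bound $D_F\leq 1/\Delta$ is not even needed to close the case; but as written that step is not established. A smaller slip: it is the stretched lattice $\mathrm{diag}(t_m^{-1},t_m)L_\alpha$, rescaled by $F_{t_m}(P_m)^{-1}$, that is admissible for $\mathcal B$, not $F_{t_m}(P_m)^{-1}L_\alpha$ itself (the determinant computation is unaffected). Altogether: a correct strategy, considerably heavier than the paper's two-line direct argument, with one rigor gap at the corner that needs the extra argument above.
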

\begin{proof}
By definition, for any $\ep>0$ 
there are arbitrarily large $q>0$ so that for some $p\in \Z$
\[
|\tfrac{p}{q}-\a |<\tfrac{\ep}{q^2}.
\]
For such a $q$ let $t=q$ and note that for any $r,s\in \Z$ with $s>0$
\begin{align*}
F_t(s,r-\a s )=F(t^{-1}s,t(s\a -r))=F(\tfrac{s}{q},q(s\a -r))\\
=F(\tfrac{s}{q},q(s\tfrac{p}{q}-r+\tfrac{\sigma s}{q^2}))=F(\tfrac{s}{q},sp-rq+\tfrac{\sigma s}{q})
\end{align*}
for some $\sigma$ with $|\sigma|\leq \ep.$
By Lemma \ref{l2} if $s\geq q$ we have  \[F(\tfrac{s}{q},sp-rq+\tfrac{\sigma s}{q})\geq F(1,0)=1,\]
while for $0<s<q$  we have $F(\tfrac{s}{q},s p-rq+\tfrac{\sigma s}{q})\geq F(0,1-\ep),$
since  $q\nmid s$.
By the continuity of $F$, for any $\ep'>0$ there is an $\ep>0$
so that $F(0,1-\ep)\geq 1-\ep'$.
It follows that $F_t(s,r-\a s )\geq 1$ and hence that $\d_F(\a)\geq \D.$
\end{proof}

To finish the proof  of Theorem \ref{genl2}, we need to find well approximable $\a$ for which $\d_F(\a)=\D.$
 \begin{lemma}\label{strict}
Suppose that the partial quotients $b_n$ of the regular continued fraction expansion of $\a$ are eventually strictly increasing
with  $n$.
Then 
\[
\d_F(\a)=\D.
\]
 \end{lemma}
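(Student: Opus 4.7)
The plan is to sandwich $\d_F(\a)$ between two equal values. Since the partial quotients $b_n$ are positive integers that are eventually strictly increasing, they are unbounded, so $\a$ is well approximable. Lemma \ref{wa} therefore gives the lower bound $\d_F(\a)\geq \D$, and the remaining work is to establish $\d_F(\a)\leq \D$.

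For the upper bound, I would invoke Lemma \ref{tl6}, which reduces the problem to showing that
\[
\limsup_{m\to\infty} D_F(\mu_m,\nu_m) \leq 1.
\]
In fact I expect to prove the stronger statement that $(\mu_m,\nu_m)\to (0,0)$ and that $D_F(0,0)=1$; by continuity of $D_F$ on $(-1,1)\times[0,1)$ this gives $D_F(\mu_m,\nu_m)\to 1$, hence equality with $\D$.

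The first task is to show $(\mu_m,\nu_m)\to(0,0)$. By Theorem \ref{sexp}, the $F$-continued fraction is the $\Sc$-expansion of $\a$ for some singularization area $\Sc\subseteq[\tfrac12,1)\times[0,1]$. Condition (i) in the definition of a singularization area forces $u_n\geq\tfrac12$ whenever $(u_n,v_n)\in\Sc$, and this in turn forces $b_{n+1}=1$. Because the $b_n$ are eventually strictly increasing positive integers, we have $b_n\geq 2$ for all sufficiently large $n$, so singularization occurs at only finitely many indices. Consequently, past some threshold the convergents of the $F$-continued fraction coincide with those of the regular continued fraction, and there is an index offset $k$ such that $(\mu_m,\nu_m)=(u_{m+k},v_{m+k})$ for all large $m$. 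The trivial bounds $u_n\leq 1/b_{n+1}$ and $v_n\leq 1/b_n$ together with $b_n\to\infty$ then yield $\mu_m,\nu_m\to 0$.

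The second task is to verify $D_F(0,0)=1$. From the formula (\ref{inver}) for $\Phi^{-1}$, at $(u,v)=(0,0)$ the $t$ must satisfy $F_t(1,0)=F_t(0,1)$, i.e.\ $F(t^{-1},0)=F(0,t)$, and the normalization (\ref{normal}) forces $t=1$. Thus $\Phi^{-1}(0,0)$ is the identity matrix and $D_F(0,0)=F^2(1,0)=1$. Combining this with the previous paragraph and Lemma \ref{tl6} gives $\d_F(\a)\leq \D$, completing the proof. The only mildly technical point is the index-tracking in the $\Sc$-expansion argument, but the finiteness of the singularizations makes this bookkeeping routine rather than an actual obstacle; the conceptual heart is simply that unboundedly large partial quotients drive $(u_n,v_n)$ to the corner $(0,0)$ where $D_F$ takes its minimum value $1$.
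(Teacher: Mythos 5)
Your proposal is correct and follows essentially the same route as the paper: show that singularization eventually stops so that $(\mu_m,\nu_m)$ tracks $(u_n,v_n)\to(0,0)$, then use $D_F(0,0)=F^2(1,0)=1$ together with the formula of Lemma \ref{tl6}. The only cosmetic difference is that you rule out eventual singularization via condition (i) of the singularization area (forcing $b_{n+1}=1$), whereas the paper notes that points near $(0,0)$ lie outside $\Sc$ (citing Lemma \ref{line}); your extra appeal to Lemma \ref{wa} is harmless but redundant, since the formula already gives equality.
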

\begin{proof}
If the regular partial quotients $b_n$ of $\alpha$ are eventually strictly increasing, then for any $\ep>0$ the points $(u_n,v_n)$ all eventually lie within $\ep$ of the point $(0,0)$.
So by Lemma~\ref{line}, the points $(u_n,v_n)$ are outside $\mathcal S$ for sufficiently large $n$.
Thus 
\begin{equation*}
  \lim_{n\to\infty} (\mu_n,\nu_n) = \lim_{n\to\infty} (u_n,v_n) = (0,0).
\end{equation*}
Finally, $D_F(0,0) = F^2 \left(\pmatrix 1001 \right) = 1$,
therefore $\delta_F(\alpha) = \Delta$.
 \end{proof}

\section{Values of $\d_p(\a)$  for any irrational $\a$}\label{small}

\subsection*{Proof of Theorem \ref{new2}}
Fix $p\in [1,\infty]$.
Throughout the proof let
\begin{equation} \label{eq:alpha-exp}
 \alpha = \frac{\ep_1}{a_1 + } \, \frac{\ep_2}{a_2 + } \, \frac{\ep_3}{a_3 +} \ldots
\end{equation}
denote the $p$-continued fraction expansion of $\alpha \in (0,1)$ and define $\mu_m$ and $\nu_m$ as in \eqref{unvn}.
By Theorem~\ref{t6} it suffices to show that for every $\alpha$ we have
\begin{equation} \label{eq:Dp-cases}
	\limsup_{m\to\infty} D_p(\mu_m,\nu_m) \geq 
	\begin{cases}
		1 & \text{ if } 1\leq p\leq 2, \\
		\tfrac {1}{10}\left(\sqrt 5+5\right) \left( \left(\tfrac 12(\sqrt 5-1)\right)^p+1 \right)^{2/p} & \text{ if }p> 2,
	\end{cases}
\end{equation}
and that there is at least one $\alpha$ for which equality holds.
In both cases the number on the right-hand side of \eqref{eq:Dp-cases} is $\leq 1$.
Since $(1-|u|^pv^p)^2 \geq (1-|u|^p)(1-v^p)$, we have
\begin{equation}
 D_p(u,v) \geq \frac{1}{1+uv}.
\end{equation}
It follows that $D_p(u,v)\geq 1$ for nonpositive $u$, so if $\mu_m\leq 0$ for infinitely many $m$, the inequality \eqref{eq:Dp-cases} holds trivially.
Thus we may assume that the $p$-continued fraction expansion of $\alpha$ has $\mu_m\geq 0$ for all sufficiently large $m$.

\begin{lemma} \label{lem:ineq}
If $0\leq u,v<1$ then
\begin{equation*}
 D_p(u,v) \geq D_p\left( \mfrac{u+v}{2}, \mfrac{u+v}{2} \right),
\end{equation*}
with equality only when $u=v$.
\end{lemma}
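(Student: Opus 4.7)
The plan is to derive the lemma from the stronger Schur-convexity statement
\[
(u-v)\bigl(\partial_u g - \partial_v g\bigr) \geq 0 \quad \text{on } [0,1)^2, \qquad g := \log D_p,
\]
with strict inequality off the diagonal. Granting this, parametrize $u = s+h$, $v = s-h$ with $s = (u+v)/2$. Since $g(u,v)=g(v,u)$ by symmetry, $h \mapsto g(s+h,s-h)$ is even in $h$, and its derivative equals $(\partial_u g - \partial_v g)|_{(s+h,s-h)}$, which has the sign of $2h$ by Schur-convexity. Hence the map is minimized at $h=0$ with equality only there, which is exactly the lemma.

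For the Schur-convexity computation (for $1 \leq p < \infty$; the case $p=\infty$ reduces to AM--GM $uv \leq ((u+v)/2)^2$ since $D_\infty(u,v)=(1+uv)^{-1}$), I would start from
\[
g(u,v) = -\log(1+uv) + \tfrac{2}{p}\log(1-u^pv^p) - \tfrac{1}{p}\log(1-u^p) - \tfrac{1}{p}\log(1-v^p),
\]
differentiate, and subtract to obtain
\[
\partial_u g - \partial_v g = \frac{u-v}{1+uv} + \frac{2u^{p-1}v^{p-1}(u-v)}{1-u^pv^p} + \frac{u^{p-1}(1-v^p)-v^{p-1}(1-u^p)}{(1-u^p)(1-v^p)}.
\]
The one nontrivial manipulation is the identity
\[
u^{p-1}(1-v^p)-v^{p-1}(1-u^p) = \bigl(u^{p-1}-v^{p-1}\bigr) + u^{p-1}v^{p-1}(u-v),
\]
which cleanly separates the last numerator into two summands each proportional to $(u-v)$: for $p \geq 1$ and $u,v \geq 0$ the factor $u^{p-1} - v^{p-1}$ has the same sign as $u-v$ (trivially for $p=1$, via the mean value theorem for $p > 1$), so one may write it as $(u-v)\phi(u,v)$ with $\phi \geq 0$. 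Pulling $(u-v)$ out of the full expression gives
\[
\partial_u g - \partial_v g = (u-v)\left[\frac{1}{1+uv} + \frac{2u^{p-1}v^{p-1}}{1-u^pv^p} + \frac{\phi(u,v)+u^{p-1}v^{p-1}}{(1-u^p)(1-v^p)}\right].
\]
Every term inside the brackets is non-negative on $[0,1)^2$ and $1/(1+uv)$ is strictly positive, so the bracketed factor is positive, yielding Schur-convexity with equality only when $u=v$.

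The only genuine obstacle is spotting the algebraic identity that splits the cross-term's numerator into two pieces each proportional to $(u-v)$; once that is in place, the Schur-convexity framework both produces the required inequality and pins down the equality case in a single stroke.
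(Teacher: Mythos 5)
Your proposal is correct, but it takes a genuinely different route from the paper's. The paper compares $D_p(u,v)$ with $D_p(s,s)$, $s=\tfrac{u+v}{2}$, factor by factor: the inequalities $1+uv \le 1+s^2$ and $1-(uv)^p \ge 1-s^{2p}$ both reduce to $(u-v)^2\ge 0$, and $(1-u^p)(1-v^p) \le (1-s^p)^2$ follows from the first of these together with $u^p+v^p \ge 2s^p$ (convexity of $x\mapsto x^p$, quoted via H\"older); this is a purely algebraic few-line argument, with the equality case coming from strictness of $1+uv\le 1+s^2$ when $u\ne v$. You instead prove that $\log D_p$ is strictly Schur-convex on $[0,1)^2$: I checked your formula for $\partial_u g-\partial_v g$ and the splitting identity $u^{p-1}(1-v^p)-v^{p-1}(1-u^p)=(u^{p-1}-v^{p-1})+u^{p-1}v^{p-1}(u-v)$, and both are right, as is the reduction of the case $p=\infty$ to AM--GM and the restriction to the segment of constant sum. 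Your route costs a derivative computation and a couple of small regularity remarks worth making explicit (that $x\mapsto x^p$ is $C^1$ at $0$ for $p\ge 1$, so $g$ is $C^1$ up to the boundary $u=0$ or $v=0$; and that $\phi$ may be unbounded near the diagonal at the origin when $1<p<2$ --- harmless, since only its sign at points with $u\ne v$ enters the mean-value/monotonicity step), but it buys a strictly stronger conclusion: monotonicity of $D_p$ under majorization on $[0,1)^2$, of which the lemma is the special comparison with the midpoint, whereas the paper's argument is shorter, calculus-free, and tailored exactly to what Theorem \ref{new2} needs.
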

\begin{proof}
The inequalities
\begin{align}
 1-(uv)^p \geq 1-\left(\mfrac{u+v}{2}\right)^{2p} \label{eq:ineq} \;\;\;\;\;\text{and}\;\;\;\;
 1+uv \leq 1+ \left(\mfrac{u+v}{2}\right)^{2}
\end{align}
both reduce to $(u-v)^2 \geq 0$.
It remains to show that
\begin{equation*}
 (1-u^p)(1-v^p) \leq \left(1 - \left(\mfrac{u+v}{2}\right)^{p}\right)^2.
\end{equation*}
This inequality is implied by the first inequality of \eqref{eq:ineq} and
\begin{equation*}
 u^p + v^p \geq 2 \left(\mfrac{u+v}{2}\right)^{p},
\end{equation*}
which follows immediately from H\"older's inequality.
\end{proof}

It is convenient to define
\begin{equation*}
 d_p(x) = D_p(x,x) = \frac{(1+x^p)^{2/p}}{1+x^2}.
\end{equation*}
Then
\begin{equation*}
 \frac{\partial}{\partial x} [d_p(x)]^p = \frac{2p(1+x^p)(x^p-x^2)}{x(x^2+1)^{p+1}}.
\end{equation*}
If $1\leq p<2$ then $d_p(x)$ is strictly increasing, so by Lemma~\ref{lem:ineq} we have
\begin{equation}
 \min_{u,v\in [0,1]} D_p(u,v) = \min_{x\in [0,1]}d_p(x) = d_p(0) = 1.
\end{equation}
If $p=2$ then $d_p(x)=1$ for all $x$.
In either case, we can use Lemma~\ref{strict} to find examples of $\alpha$ for which $\delta_p(\alpha)=\Delta_p$.

Suppose that $p>2$, and let $\beta = \frac12(\sqrt 5-1)$.
The sequence $(u_n, v_n)$ associated to the regular continued fraction
\begin{equation*}
	\beta = \frac{1}{1+} \, \frac{1}{1+} \, \frac{1}{1+}\ldots
\end{equation*}
approaches $(\beta,\beta)$ as $n\to\infty$.
By Lemma \ref{line} it follows that the sequence $(\mu_m,\nu_m)$ associated to the $p$-continued fraction of $\beta$ also converges to $(\beta,\beta)$.
Thus, for $p>2$ we have $\delta_p(\beta)=D_p(\beta,\beta)$, which is the number in (\ref{Dp}).

It remains to show that for every $\alpha$ with $\mu_m\geq 0$ for sufficiently large $m$, we have $D_p(\mu_m,\nu_m)\geq D_p(\beta,\beta)$ for infinitely many $m$.
Since $p>2$, the function $d_p(x)$ is strictly decreasing, so by Lemma~\ref{lem:ineq} it suffices to show that
\begin{equation*}
\mu_m+\nu_m \leq \sqrt 5-1 = 1.23606\ldots
\end{equation*}
for infinitely many $m$.

If there are infinitely many $m$ such that $a_{m+1}\geq 5$ then for such $m$ we have $\mu_m\leq \frac 15$ and therefore $\mu_m+\nu_m \leq 1.2$.
So we may suppose that $a_m \leq 4$ for all sufficiently large $m$.
The following lemma covers the remaining cases.

\begin{lemma}
Let $\ell\in \{2,3,4\}$
and suppose that $\ep_m=1$ and $a_m\leq \ell$ for sufficiently large $m$.
If $a_{m}=\ell$ for infinitely many $m$, then
\begin{equation}
	\mu_m+\nu_m < 1.18
\end{equation}
for infinitely many $m$.
\end{lemma}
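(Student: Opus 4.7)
The plan is to obtain for each $\ell\in\{2,3,4\}$ a uniform upper bound on $\mu_m+\nu_m$ that is valid at every sufficiently large $m$ with $a_m=\ell$ and that is strictly below $1.18$; for $\ell=3,4$ a single trivial estimate will do, while $\ell=2$ requires a short case split. The key preliminary fact I will use is that any infinite continued fraction $[0;c_1,c_2,\ldots]$ with $c_i\in\{1,\ldots,\ell\}$ and all sign-numerators equal to $+1$ is bounded above by
\[
\xi_\ell\defeq[0;1,\ell,1,\ell,\ldots]=\tfrac12\bigl(-\ell+\sqrt{\ell^2+4\ell}\bigr),
\]
which follows from the standard iterative maximization: to make $[0;c_1,c_2,\ldots]$ large take $c_1=1$; then one must make $[0;c_2,c_3,\ldots]$ small, forcing $c_2=\ell$; and so on by recursion. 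Applied to $\mu_m=[0;a_{m+1},a_{m+2},\ldots]$ (which has $\ep_i=1$ and $a_i\leq\ell$ for $m$ large), this gives $\mu_m\leq\xi_\ell$. Since $\alpha$ is irrational, $\nu_{m-1}>0$, so whenever $a_m=\ell$ we also have $\nu_m=1/(\ell+\nu_{m-1})<1/\ell$.

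For $\ell=3$ the combined estimate yields at any sufficiently large $m$ with $a_m=3$
\[
\mu_m+\nu_m<\xi_3+\tfrac13=\tfrac{1}{6}(-7+3\sqrt{21})\approx 1.125<1.18,
\]
and for $\ell=4$
\[
\mu_m+\nu_m<\xi_4+\tfrac14=-\tfrac74+2\sqrt2\approx 1.078<1.18.
\]
Since $a_m=\ell$ for infinitely many $m$, these are the required infinitely many indices, proving the lemma for $\ell\in\{3,4\}$ at once.

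For $\ell=2$ the corresponding trivial bound $\xi_2+\tfrac12=\sqrt{3}-\tfrac12\approx 1.232$ is too large, so I split into two subcases. If there are infinitely many $m$ with $a_m=a_{m+1}=2$, then at each such $m$ both $\mu_m\leq\tfrac12$ and $\nu_m\leq\tfrac12$, giving $\mu_m+\nu_m\leq1<1.18$. Otherwise there exists $N'$ past which $a_m=2$ forces $a_{m+1}=1$, and since $a_m=2$ still occurs infinitely often beyond $N'+1$, for each such $m$ we must also have $a_{m-1}=1$ (else $a_{m-1}=2$ would force $a_m=1$, contradicting $a_m=2$). Using the general bound $\nu_{m-2}\in[0,1)$ for semi-regular expansions (this is part of Lemma~\ref{cont}, whose map $\Phi$ has image $(-1,1)\times[0,1)$), we obtain $\nu_{m-1}=1/(1+\nu_{m-2})>\tfrac12$ and hence $\nu_m<\tfrac25$. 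Combined with $\mu_m\leq\xi_2=\sqrt{3}-1$, this gives
\[
\mu_m+\nu_m<(\sqrt{3}-1)+\tfrac25=\sqrt{3}-\tfrac35<1.18,
\]
the last inequality following from $\sqrt{3}<1.78$ (i.e.\ $3<1.78^2=3.1684$).

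The only real obstacle is $\ell=2$: the naive bound $\xi_2+\tfrac12$ is insufficient, and one must exploit the structural observation that two consecutive $2$'s occur only finitely often in the difficult subcase. After that structural input, every remaining step reduces to an explicit numerical inequality.
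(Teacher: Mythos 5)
Your proof is correct, and it takes a genuinely (if mildly) different route from the paper's. The paper evaluates at indices $m$ with $a_{m+1}=\ell$, so that the \emph{forward} tail is the small one, $\mu_m\le \frac{1}{\ell+}\,\frac{1}{\ell+}\,\frac{1}{1+}\,\frac{1}{\ell+}\cdots$, while $\nu_m$ is bounded only by the generic worst case $\frac{1}{1+}\,\frac{1}{\ell+1}=\frac{\ell+1}{\ell+2}$; these sum to at most roughly $1.173$ (the worst value, at $\ell=2$), so one uniform two-line estimate settles $\ell=2,3,4$ with no case analysis. You instead stay at the indices with $a_m=\ell$, which makes the \emph{backward} tail small, $\nu_m<1/\ell$, but forces the worst-case bound $\mu_m\le \frac{1}{1+}\,\frac{1}{\ell+}\,\frac{1}{1+}\,\frac{1}{\ell+}\cdots=\tfrac12\bigl(-\ell+\sqrt{\ell^2+4\ell}\bigr)$; that suffices for $\ell=3,4$ but not for $\ell=2$ (where it gives $\approx 1.232$), and your case split — either $a_m=a_{m+1}=2$ infinitely often, giving $\mu_m+\nu_m\le 1$, or eventually every $2$ is isolated, forcing $a_{m-1}=1$, $\nu_{m-1}>\tfrac12$, $\nu_m<\tfrac25$ and the bound $\sqrt3-\tfrac35\approx 1.13$ — repairs it correctly. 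So the paper's index shift buys uniformity in $\ell$, while your variant is slightly longer but equally elementary, and all your numerical values check out. One bookkeeping remark: the facts $\nu_j\in[0,1)$ and $\nu_j>0$ for $j\ge 1$ are best quoted from Lemma~\ref{l1} together with Lemma~\ref{alg1}(i) (since $\nu_j=q_{j-1}/q_j$ with $0\le q_{j-1}<q_j$), rather than from Lemma~\ref{cont}; irrationality of $\alpha$ is not what makes $\nu_{m-1}$ positive there.
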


\begin{proof}
Suppose that $\ep_m=1$ and $a_m\leq \ell$ for $m\geq M$.
Then for any $m\geq M+3$ with $a_{m+1}=\ell$ we have
\begin{align*}
	\mu_m &\leq \frac{1}{\ell+} \, \frac{1}{\ell+} \, \frac{1}{1+} \, \frac{1}{\ell+} \, \frac{1}{1+} \, \frac{1}{\ell+} \, \cdots, \\
	\nu_m &\leq \frac{1}{1+} \, \frac{1}{\ell+1}.
\end{align*}
The lemma now follows from an easy computation.
\end{proof}

This completes the proof of Theorem  \ref{new2}.\qed

\bigskip
We remark that it is sometimes possible  to compute the minimum value of $\d_F(\a)$  for other norms as well.
The composition of strongly symmetric norms is,  up to scaling,
also strongly symmetric (see Lemma \ref{lcom}). 
For instance, the norms $F^{\mathrm{oct_1}}$ and $F^{\mathrm{oct_2}}$ with regular octagonal unit balls mentioned in \S \ref{intro} 
 can be given in terms of compositions of the 1-norm and the sup-norm. Explicitly,
\begin{equation}\label{octn}
F^{\mathrm{oct_1}}(P)= F^{\langle \infty\rangle}(Q) \;\;\;\text{and}\;\;\;\; F^{\mathrm{oct_2}}(P)=(2-\sqrt{2})F^{\langle 1\rangle}(Q)
\end{equation}
where $Q=(\tfrac{1}{\sqrt{2}}F^{\langle 1\rangle}(P),F^{\langle \infty\rangle}(P)).$
These formulas, together with Mahler's computation of the critical determinant of the regular octagon 
recalled at the end of \S \ref{GN} and Lemma \ref{line}, lead to the result referred to at the end of \S \ref{intro}.  The minimum of $\d_F(\a)$ for  $F=F^{\mathrm{oct_1}}$ is $\frac{1}{8} \left(3 \sqrt{2}+2\right), $ which is attained when
\[\a=\sqrt{2}-1=\frac{1}{2+} \, \frac{1}{2+} \, \frac{1}{2+}\ldots.\]
For $F=F^{\mathrm{oct_2}}$ the minimal value is also 
$\frac{1}{8} \left(3 \sqrt{2}+2\right)$, but now this is the value of $\D$ and is attained when $\a=\frac{e-1}{e+1}$,
for instance.

%%%%%%%%%%%%%%%%%%%%%%%%%%%%%%%%%%%%%%%%%%%%%%%%%%%%%%%%%%%%%%%%

%%%%%%%%%%%%%%%%%%%%%%%%%%%%%%%%%%%%%%%%%%%%%%%%%%%%%%%%%%%%%%%%

\section{The dynamical system}
The goal of this section is to prove Theorem~\ref{t4}.
We employ the notation of Section~\ref{sec:s-exp}.
Say that $g=\pmatrix{x}{y}{x'}{y'} \in G$ is {\it reduced with respect to the norm} $F$ if $g \in \mathcal{D}$ and
% \begin{equation*}
% 	\|aP+bP'\| > F(P) \quad \text{ for all }(a,b)\in \Z^2\setminus\{(0,0), (\pm 1, 0), (0, \pm 1)\}.
% \end{equation*}
\begin{equation}\label{reduce}
	\left(F(P)\overline {\mathcal B}\right) \cap L(g) = \{0, \pm P, \pm P'\},
\end{equation}
where the overline denotes the closure and $L(g)$ was defined in \eqref{lofq}.
Let $\mathcal{R}$ be the set of all $g$ that are reduced with respect to $F$ and define
$\Omega \subset (-1,1)\times[0,1]$ as
\begin{equation}\label{Om}
\Omega\defeq \Phi(\mathcal{R}) \cup \mathcal A,
\end{equation}
where
\begin{equation*}
  \mathcal A = \overline{\Phi(\mathcal{R})} \cap \left((-\tfrac 12,\tfrac 12)\times\{0\}\right).
\end{equation*}
We will show that $(\mu_n,\nu_n)\in \Omega$ for all $n\geq 0$.

We want to apply the ergodic theory of $\Sc$-expansions as developed in  \cite{Kra1}.
For that we  need to show that $\Omega$ defined by \eqref{Om} coincides with the set 
\begin{equation*}
 	\Omega_{\mathcal S} = \left([0,1)\times [0,1] \setminus (\mathcal S \cup T\mathcal S)\right) \cup (M\circ T) \mathcal S
\end{equation*}
defined in Section~5 of \cite{Kra1}, where
\begin{equation*}
	M(u,v) = \left( \mfrac{-u}{1+u}, 1-v \right), \quad (u,v)\in T\mathcal S.
\end{equation*}

The following equivalent description of reduced matrices is helpful.
\begin{lemma}
A matrix $g\in \mathcal D$ is reduced if and only if
\begin{equation} \label{eq:R-pm}
	\min\big(F(P+P'),F(P-P')\big) > F(P).
\end{equation}
\end{lemma}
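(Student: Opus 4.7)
The forward direction is essentially tautological. Suppose $g\in \mathcal D$ is reduced. Since $\{P,P'\}$ is a $\Z$-basis of $L(g)$, each of the four lattice points $\pm(P\pm P')$ is distinct from every element of $\{0,\pm P,\pm P'\}$ (the coordinates in the basis are $(\pm 1,\pm 1)$, which is not in the allowed set). The reducedness condition therefore forces $P\pm P'$ to lie outside $F(P)\overline{\mathcal B}$, giving $F(P\pm P')>F(P)$.

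For the converse assume the inequality \eqref{eq:R-pm}, and write $r\defeq F(P)=F(P')$. I plan to show that every $Q=aP+bP'$ with $(a,b)\in \Z^2\setminus \{(0,0),(\pm 1,0),(0,\pm 1)\}$ satisfies $F(Q)>r$; after replacing $Q$ by $-Q$ if necessary, I may take $a\geq 0$. The key ingredient is that $\varphi(s)\defeq F(P+sP')$ is a convex function on $\R$ with $\varphi(0)=r$ and $\varphi(\pm 1)>r$, which by a standard convexity argument forces $\varphi(s)>r$ for every $|s|\geq 1$.

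The case analysis is then short. When $a=0$ (so $|b|\geq 2$) or $b=0$ (so $a\geq 2$), one has $F(Q)\geq 2r$ from $F(P)=F(P')=r$. When $a=1$ and $b\notin\{0,\pm 1\}$, we simply have $F(Q)=\varphi(b)>r$. The remaining case, $a\geq 2$ with $b\neq 0$, is the heart of the proof. Supposing for contradiction that $F(Q)\leq r$, set $\epsilon=\mathrm{sgn}(b)$ and consider the line segment from $\epsilon P'$ to $Q$. Since both endpoints have $F$-norm at most $r$, convexity of $F$ confines the whole segment to $F(P)\overline{\mathcal B}$. A direct computation shows that the point at parameter $t=1/a$ has the form $P+sP'$ with
\[
s=\epsilon+\frac{b-\epsilon}{a};
\]
inspecting the two sign subcases ($b\geq 1$ with $\epsilon=+1$ gives $s\geq 1$, while $b\leq -1$ with $\epsilon=-1$ gives $s\leq -1$) yields $|s|\geq 1$, contradicting $\varphi(s)>r$.

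I expect the main technical point to be identifying the right segment endpoint in the final case so that the intermediate point lands at $|s|\geq 1$; once $\epsilon=\mathrm{sgn}(b)$ is seen to do the job, the remainder of the argument is routine bookkeeping.
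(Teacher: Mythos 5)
Your proof is correct, and in the converse direction it takes a genuinely different route from the paper's. Both arguments reduce the lemma to showing $F(aP+bP')>F(P)$ for every $(a,b)\in\Z^2$ outside $\{(0,0),(\pm1,0),(0,\pm1)\}$ (the forward direction being immediate in both), but the paper handles the nontrivial pairs by the reverse triangle inequality: if $|a|=|b|$ then $aP+bP'=\pm a(P\pm P')$; if $|a|-|b|\ge 2$ then $F(aP+bP')\ge(|a|-|b|)F(P)$; and in the delicate case $|a|=|b|+1$ it rewrites $aP+bP'=a(P\pm P')\mp P'$ and applies the reverse triangle inequality to that expression, using the symmetry between $P$ and $P'$ to assume $|a|>|b|$. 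You instead use convexity: $\varphi(s)=F(P+sP')$ is convex with $\varphi(0)=F(P)$ and $\varphi(\pm1)>F(P)$, hence $\varphi(s)>F(P)$ for all $|s|\ge1$, and for $a\ge2$, $b\ne0$ you observe that the point $P+sP'$ with $s=\epsilon+(b-\epsilon)/a$, $|s|\ge 1$, lies on the segment from $\epsilon P'$ to $Q$, so $F(Q)\le F(P)$ would force $\varphi(s)\le F(P)$ by convexity of the closed ball, a contradiction. Your version buys a uniform treatment of all $a\ge2$ at once, with no WLOG symmetry step and no $|a|=|b|+1$ rewriting trick; the paper's computation is shorter but more ad hoc. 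Two minor remarks: your case list omits $(a,b)=(1,\pm1)$, but that is literally the hypothesis $\varphi(\pm1)>F(P)$, so nothing is lost; and the contradiction in your last case can be avoided, since convexity along the segment gives $\varphi(s)\le\bigl(1-\tfrac1a\bigr)F(P')+\tfrac1a F(Q)$, i.e. $F(Q)\ge a\bigl(\varphi(s)-F(P)\bigr)+F(P)>F(P)$ directly.
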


\begin{proof}
Clearly a matrix $g$ satisfying \eqref{reduce} also satisfies \eqref{eq:R-pm}.
Suppose $g\in \mathcal D$ satisfies \eqref{eq:R-pm}.
We will show that $F(aP+bP')>F(P)$ for all $(a,b)\in \Z^2\setminus\{(0,0),(0,\pm 1),(\pm 1,0)\}$.
If $|a|=|b|$ then 
\begin{equation*}
	F(aP+bP') = |a|F(P\pm P') > F(P).
\end{equation*}
Otherwise, if $|a|>|b|$, say, then by the reverse triangle inequality
\begin{equation*}
	F(aP+bP') \geq |a|F(P) - |b|F(P') = (|a|-|b|) F(P).
\end{equation*}
This is strictly greater than $F(P)$ if $|a|-|b|\geq 2$.
If $|a|=|b|+1$ then
\begin{equation*}
	F(aP+bP') = F(a(P\pm P') \pm P') \geq |a|F(P+P') - F(P') > (|a|-1)F(P).
\end{equation*}
This completes the proof since $|b|\geq 1$ so $|a|\geq 2$.
\end{proof}

It follows that the set $(-1,1)\times[0,1]$ decomposes as $\Omega \sqcup \mathcal S \sqcup \mathcal S' \sqcup \mathcal S''$, where
\begin{align}
	\mathcal S' &= \Phi\left( \{g\in \mathcal D : F(P - P') \leq F(P) \text{ and } y < 0 \} \right), \notag \\
	\mathcal S'' &= \Phi\left( \{g\in \mathcal D : F(P - P') \leq F(P) \text{ and } y \geq 0  \} \right) \cup \left((-1,\tfrac 12)\setminus \mathcal A\right). \notag
\end{align}
See Figure~\ref{fig:omega} for the case $p=2$.

\begin{figure}[h]
   \centering
   \begin{overpic}[width=0.5\textwidth]{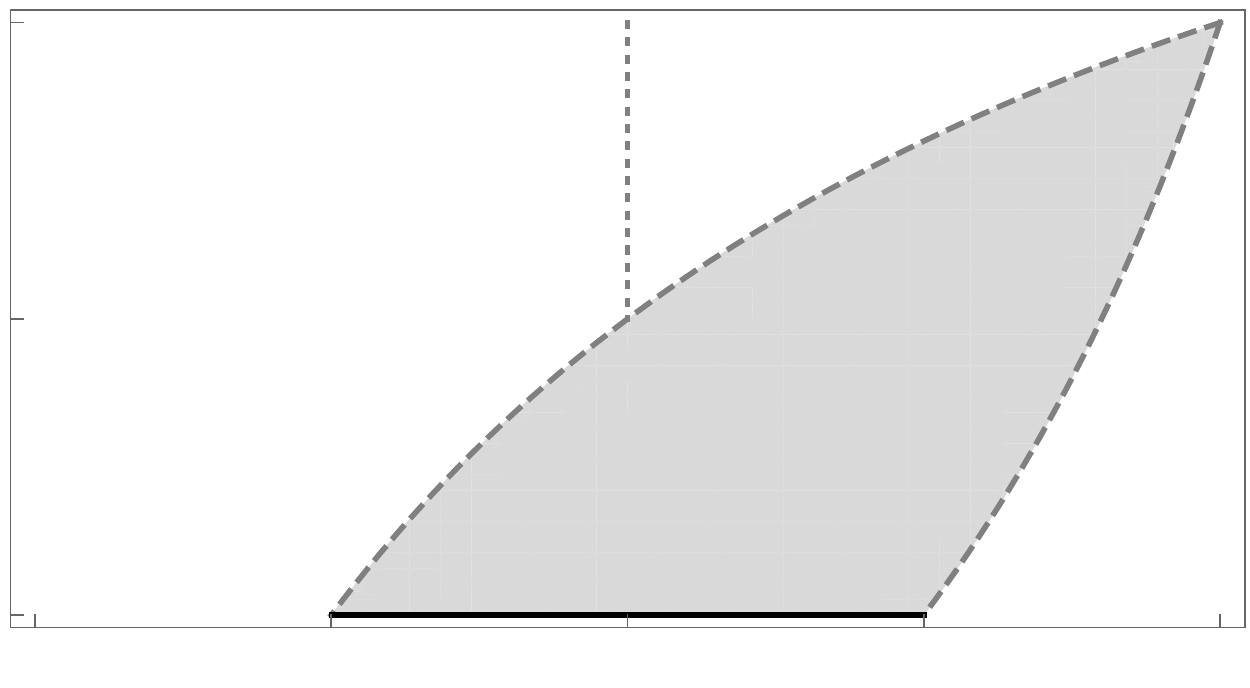}
    \put(62,20){$\Omega$}
    \scriptsize
    \put(90,20){$\mathcal S$}
    \put(62,45){$\mathcal S'$}
    \put(25,32){$\mathcal S''$}
    \tiny
    \put(0,1){$-1$}
    \put(23,1){$-\frac 12$}
    \put(49.5,1){$0$}
    \put(73,1){$\frac 12$}
    \put(97,1){$1$}
    \put(-2,5){$0$}
    \put(-2.5,28.6){$\frac 12$}
    \put(-2,52){$1$}
   \end{overpic}
   \caption{The sets $\Omega$, $\mathcal S$, $\mathcal S'$, and $\mathcal S''$ for $p=2$.}
   \label{fig:omega}
\end{figure}
Since the critical lattices for $F$ are among those for which two basis vectors and their sum all have equal norm, we will refer to the set
\begin{equation} \label{eq:crit}
	\left\{g\in \mathcal{D}; \min\big(F(P+P'),F(P-P')\big) = F(P)\right\}
\end{equation}
as the potentially critical matrices.
The next lemma describes the boundary of $\Omega$ in terms of
the distinguished subset 
\begin{equation*}
	\mathcal P = \left\{ g\in \mathcal D : F(P)=F(P+P') \right\}
\end{equation*}
of the potentially critical matrices.
% By Lemma~\ref{lm} we have $\mathcal P \subseteq \mathcal R$.

\begin{lemma} \label{lem:bound}
The part of the boundary of $\Omega$ that lies in $(-1,1)\times [0,1]$ is $\partial \cup \partial' \cup \partial'' \cup \mathcal A$, where
\begin{align*}
	\partial &= \Phi(\mathcal P), \\
	\partial' &= \left\{ \Phi\left(\pmatrix{x+x'}{y+y'}{x\vphantom{x'}}{y}\right) ;\, g\in \mathcal P \right\}, \\
	\partial'' &= \left\{ \Phi\left(\pmatrix{x+x'}{y+y'}{x'}{y'}\right) ;\, g\in \mathcal P \right\}.
\end{align*}
\end{lemma}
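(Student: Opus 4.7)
The plan is to exploit the disjoint decomposition
\[
(-1,1)\times[0,1] = \Omega \sqcup \mathcal{S} \sqcup \mathcal{S}' \sqcup \mathcal{S}''
\]
already established in the preceding text. The topological boundary of $\Omega$ in $\mathbb{R}^2$, restricted to this rectangle, is then the union of the three interfaces $\bar{\Omega}\cap\bar{\mathcal{S}}$, $\bar{\Omega}\cap\bar{\mathcal{S}'}$, $\bar{\Omega}\cap\bar{\mathcal{S}''}$, together with the $v=0$ portion of $\bar{\Omega}$, which is exactly $\mathcal{A}$ by the definition $\Omega = \Phi(\mathcal{R}) \cup \mathcal{A}$.

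The interface with $\mathcal{S}$ is immediate: the defining inequality $F(P+P')\leq F(P)$ for $\mathcal{S}$ is strict on its interior and reverses strictly on $\Phi(\mathcal{R})$, so the shared boundary points are precisely those parametrized by matrices in $\mathcal{D}$ with $F(P+P') = F(P)$, i.e.\ $\Phi(\mathcal{P}) = \partial$.

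For the interfaces with $\mathcal{S}'$ and $\mathcal{S}''$, I would argue that they correspond to matrices $h\in\mathcal{D}$ satisfying $F(P_h - P_h') = F(P_h)$. The crux is that whenever this holds, all three of $P_h$, $P_h'$, and $P_h - P_h'$ share a common norm, giving a \emph{critical triple} that can be moved into $\mathcal{P}$ by a change of basis: ordering the pair $\{P_h - P_h', P_h'\}$ (with signs chosen so the resulting matrix $g$ lies in $\mathcal{D}$) produces a $g \in \mathcal{P}$ whose row-sum is exactly $P_h$. The original $h$ is then recovered from $g$ as one of the two formal rewritings $\pmatrix{x+x'}{y+y'}{x}{y}$ or $\pmatrix{x+x'}{y+y'}{x'}{y'}$, the choice depending on which of $P_h - P_h'$ or $P_h'$ is placed in the second row of $g$. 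The reverse direction can be checked by direct computation in $(u,v)$-coordinates: one obtains $\Phi(\tilde{g}_1) = ((1-u)/u,\,1/(1+v))$ and $\Phi(\tilde{g}_2) = (u-1,\,v/(1+v))$ where $(u,v) = \Phi(g)$, verifying that these images lie inside the rectangle and land on the desired interface.

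The main obstacle is the case analysis required to carry out this change of basis: for each $h$ on the $F(P_h - P_h')=F(P_h)$ interface one must determine, from the signs of the row entries and the relative sizes of the $x$-coordinates, exactly which sign-flipped ordered pair from $\{P_h - P_h', P_h'\}$ lands in $\mathcal{D}$, and thus whether $h$ appears in $\partial'$ or in $\partial''$. One must also handle degenerate boundary positions (most importantly $x'=0$, which connects the interface to $\mathcal{A}$) and confirm, using the two explicit formulas above, that as $g$ ranges over $\mathcal{P}$ the families $\{\Phi(\tilde{g}_1)\}$ and $\{\Phi(\tilde{g}_2)\}$ together fill out the full $F(P-P')=F(P)$ interface.
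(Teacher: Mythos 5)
Your proposal follows essentially the same route as the paper: it identifies the boundary in the rectangle as the interface between $\Phi(\mathcal R)$ and $\mathcal S\cup\mathcal S'\cup\mathcal S''$ (i.e.\ the potentially critical matrices), takes $\partial=\Phi(\mathcal P)$ for the $F(P+P')=F(P)$ part, and treats the $F(P-P')=F(P)$ part by the same change of basis to an element of $\mathcal P$ whose row sum is the original first row, recovering the original matrix as one of the two rewritings. The case analysis you defer is settled in the paper in one line---the sign of the $y$-coordinate of the first row decides whether one lands in $\partial'$ or $\partial''$---and the $(u,v)$-formulas you quote are exactly the ones the paper uses (in the subsequent lemma) to check $T(\partial)=\partial'$ and $(M\circ T)(\partial)=\partial''$.
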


\begin{proof}
The boundary of $\Omega$ is $\mathcal A \cup \mathcal C$, where
\begin{equation*}
	\mathcal C = \Phi\left(\left\{g\in \mathcal{D};\, x'>0\text{ and } \min\big(F(P+P'),F(P-P')\big) = F(P)\right\}\right).
\end{equation*}
% If $g\in \mathcal R$ with $x'=0$ then $g = \pmatrix{x}{-u/x}{0}{1/x}$
% for some $|u|<1$.
% (\hl{Suppose that $h\leq \frac 12$ in the definition of admissible above}.)
% Since $F$ is strongly symmetric, the condition $g\in \mathcal R$ is equivalent to $1\pm u\geq |u|$, i.e.~$u\in [-\frac12,\frac 12]$.
% So these are the points in $\mathcal A$.
Clearly $\partial$ is the part of $\mathcal C$ adjacent to $\mathcal S$.
The remaining set, $\mathcal C\setminus \partial$, is the image of the set of $g'\in \mathcal D$ satisfying
\begin{equation*}
	F(Q)=F(Q-Q')<F(Q+Q'), \text{ where } g' = \left(\begin{smallmatrix}Q \\ Q'\end{smallmatrix}\right).
\end{equation*}
If the $y$-coordinate of $Q$ is negative, then $(P,P')=(Q',Q-Q')$ gives an element of $\mathcal P$, and
\begin{equation*}
	\Phi(g') = \Phi\pmatrix{x+x'}{y+y'}{x\vphantom{x'}}{y}.
\end{equation*}
Otherwise $(P,P')=(Q-Q',Q')$ yields an element of $\mathcal P$; in this case
\begin{equation*}
	\Phi(g') = \Phi\pmatrix{x+x'}{y+y'}{x'}{y'}.
\end{equation*}
This completes the proof.
\end{proof}

\begin{figure}[h]
  \centering
  \includegraphics[width=0.38\textwidth]{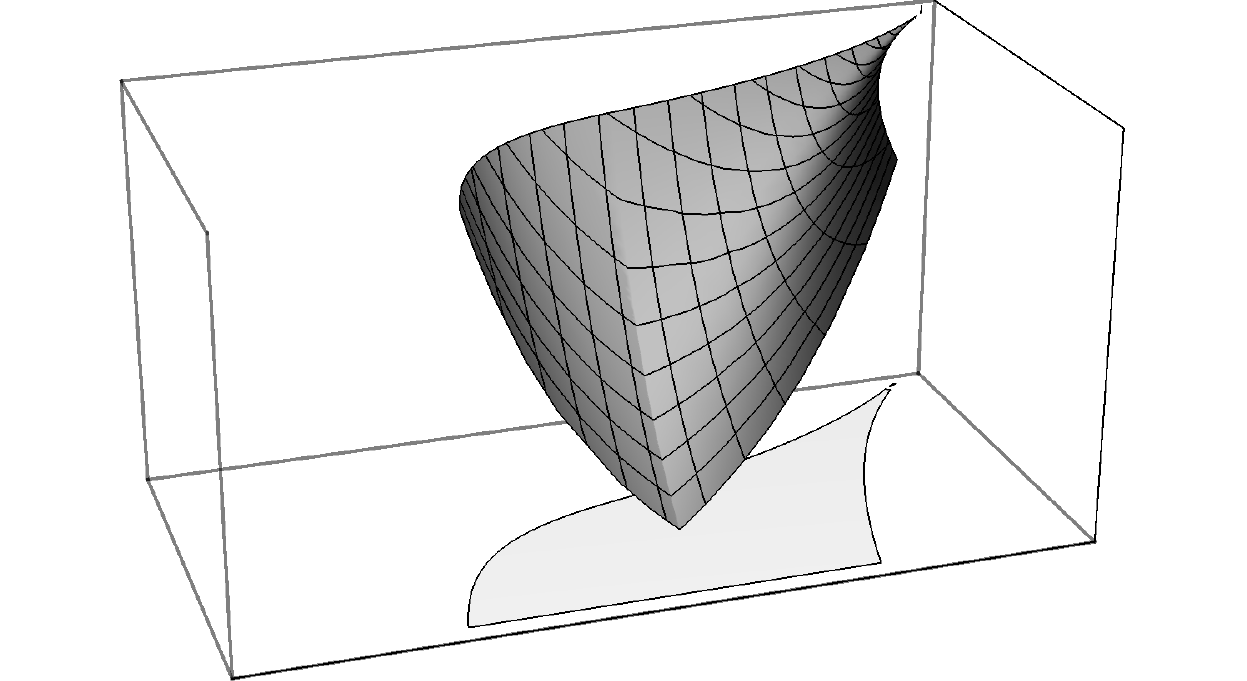}
  \qquad \quad
  \includegraphics[width=0.35\textwidth]{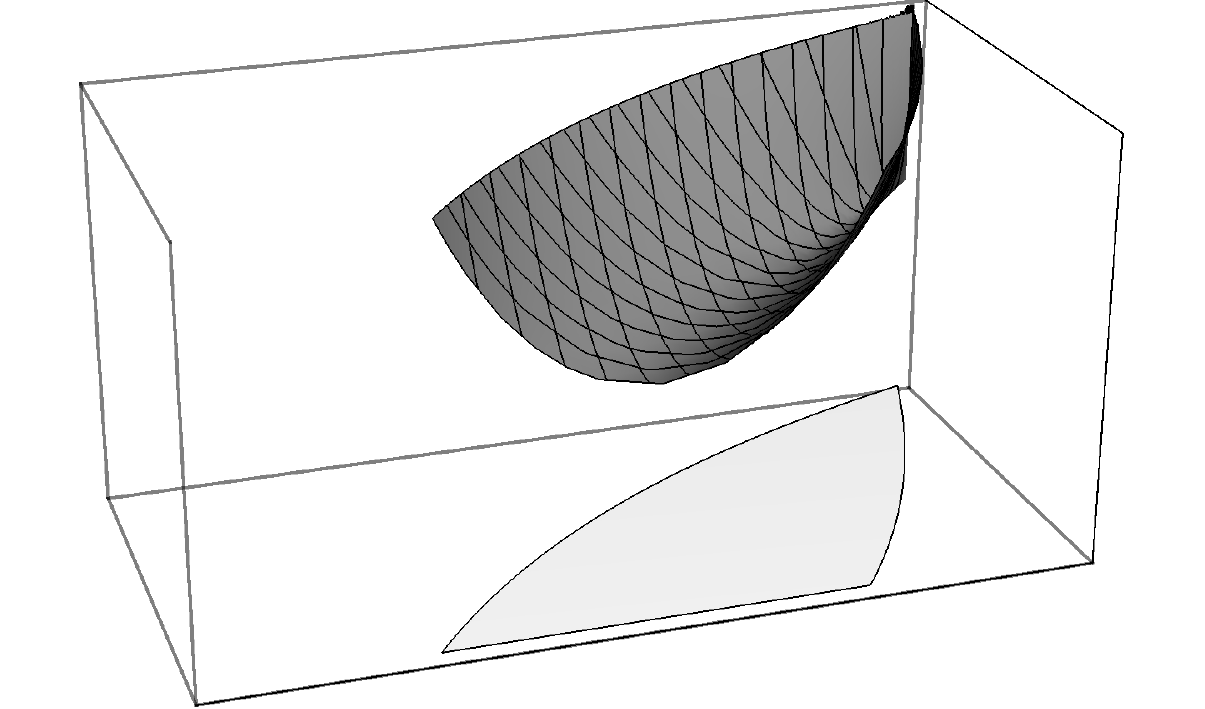}
  \qquad 
  \includegraphics[width=0.3\textwidth]{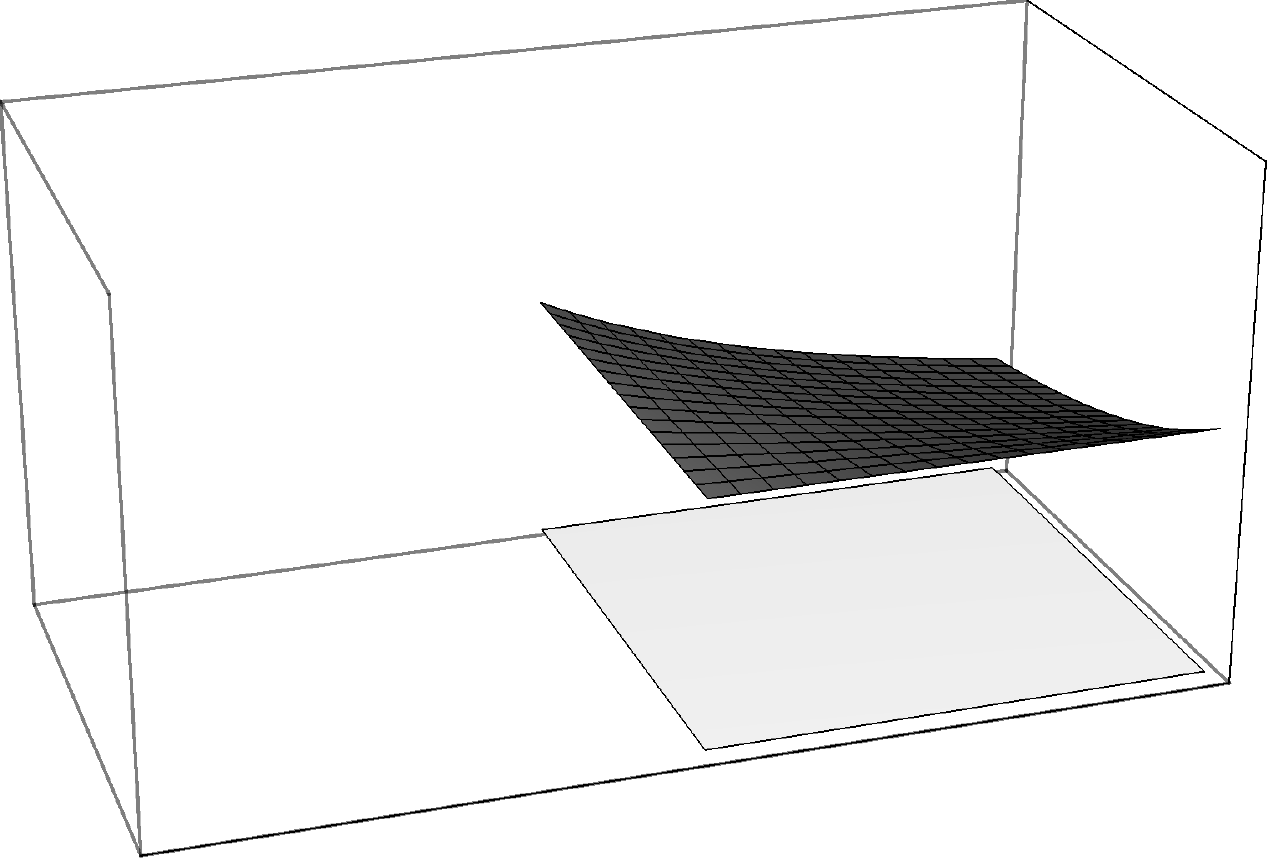}
  \caption{The functions $D_p(u,v)$ over $\Omega_p$ for $p=1,2,\infty$.}
\end{figure}

\begin{lemma} \label{lem:D-max}
  The function $D(u,v)$ is continuous on $\overline{\Omega}\setminus\{(1,1)\}$ and assumes its maximum value $1/\Delta$ on that set.
\end{lemma}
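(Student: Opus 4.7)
The plan is to establish three things: continuity of $D=D_F$ on $\overline\Omega\setminus\{(1,1)\}$, the upper bound $D\leq 1/\Delta$ on this set, and attainment of $1/\Delta$ at some point of it. Continuity is mostly inherited from the earlier observation just after \eqref{Duv} that $D_F$ is continuous on $(-1,1)\times[0,1)$, so it suffices to show $\overline\Omega\setminus\{(1,1)\}\subset(-1,1)\times[0,1)$. Using \eqref{inver}, Lemma~\ref{lll}, and strong symmetry, the only boundary points of $[-1,1]\times[0,1]$ that $\Phi(\mathcal D)$ can accumulate at are the two corners $(\pm 1,1)$. As $(u,v)\to(-1,1)$ the normalizing factor $(1+uv)^{-1/2}$ in \eqref{inver} forces both rows $P,P'$ of $\Phi^{-1}(u,v)$ to grow in norm while the difference $P-P'$ stays small, and a short estimate using \eqref{normal} yields $F(P-P')/F(P)\to 0$ uniformly in $t$. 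This contradicts the reducedness inequality $F(P-P')>F(P)$ required by \eqref{reduce}, so $(-1,1)\notin\overline\Omega$.

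For the upper bound, fix $g\in\mathcal R$ and set $c=F(P)$. By \eqref{reduce}, $L(g)$ meets the open ball $c\mathcal B$ only at the origin, so $L(g)/c$ is admissible for $\mathcal B$ with determinant $1/c^2$. The definition of the critical determinant then gives $1/c^2\geq\Delta$, hence $D(\Phi(g))=c^2\leq 1/\Delta$ for every $g\in\mathcal R$, and by continuity this bound extends to all of $\overline\Omega\setminus\{(1,1)\}$.

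To realize equality, apply Mahler's theorem (\S\ref{GN}) to obtain a critical lattice $L_0$ with $\det L_0=\Delta$ and a $\Z$-basis $\{Q,Q'\}$ with $Q,Q',Q+Q'\in\partial\mathcal B$. Rescaling by $1/\sqrt\Delta$ and then using strong symmetry to flip coordinate signs, and possibly swapping the two vectors or replacing one by their sum or difference, one assembles the resulting pair into a matrix $g_0\in\overline{\mathcal D}$ satisfying $F(P+P')=F(P)$, so $\Phi(g_0)\in\partial\cup\partial'\cup\partial''\cup\mathcal A\subset\overline\Omega$ by Lemma~\ref{lem:bound}. A suitable choice among the possible bases (for instance the parallelograms \eqref{par1}--\eqref{par2} for the $p$-norm give $\Phi(g_0)=(0,1/2)$ or $(\tau_p/(1-\tau_p),1-\tau_p)$) ensures $\Phi(g_0)\neq(1,1)$. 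Since $F(P)=1/\sqrt\Delta$ we obtain $D(\Phi(g_0))=1/\Delta$. The main obstacle is the exclusion of $(-1,1)$ in step one, since the parameter $t$ in \eqref{inver} may diverge and the bound on $F(P-P')/F(P)$ must be made uniform in $t$ via \eqref{normal}.
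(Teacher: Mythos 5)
Your overall strategy is the paper's: the upper bound via admissibility of the rescaled lattice (so $D=F(P)^2=(\det L)^{-1}\le 1/\Delta$ for reduced $g$) is exactly how the paper argues, and attainment via Mahler's critical lattices plus Lemma~\ref{lem:bound} is the right frame. But there are two genuine gaps. First, your continuity step rests on the claim that $\overline\Omega\setminus\{(1,1)\}\subset(-1,1)\times[0,1)$, i.e.\ that $\Phi(\mathcal R)$ can accumulate on the boundary of the square only at the corners $(\pm1,1)$. This is false in general: for the sup-norm one has $t=1$ and the matrix $\frac{1}{\sqrt{1+uv}}\pmatrix{1}{-u}{v}{1}$ satisfies $F(P)=F(P')=\frac{1}{\sqrt{1+uv}}$ while $F(P+P')=\frac{1+v}{\sqrt{1+uv}}$ and $F(P-P')=\frac{1+u}{\sqrt{1+uv}}$, so it is reduced for all $0<u,v<1$; hence $\overline\Omega$ contains the whole top edge, e.g.\ $(\tfrac12,1)$, which is not in $(-1,1)\times[0,1)$. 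So continuity on $\overline\Omega\setminus\{(1,1)\}$ cannot be reduced to the continuity of $D_F$ on the open rectangle; one must argue that $D$ extends continuously to the closure points that actually occur (for the sup-norm $D_\infty=(1+uv)^{-1}$ extends harmlessly, while for e.g.\ the $1$-norm $D_1$ blows up at $v=1$, forcing $\overline\Omega$ to avoid that edge), and your argument does not address this.

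Second, and more importantly, the attainment step is incomplete at precisely the point where the paper does its real work. The sentence ``a suitable choice among the possible bases ensures $\Phi(g_0)\neq(1,1)$'' is supported only by $p$-norm examples; for a general strongly symmetric norm the problematic case is when the critical triple is mirror-symmetric, i.e.\ (after stretching) $P=\tfrac1{\sqrt2}(t^{-1},-t)$, $P'=\tfrac1{\sqrt2}(t^{-1},t)$ with $F_t(P)=F_t(P')=F_t(P+P')$, and then the natural arrangement $\{P,P'\}$ lands exactly on $(1,1)$. The paper's proof is devoted to this case: it observes that the rearranged matrix $\tfrac1{\sqrt2}\pmatrix{2t^{-1}}{0}{t^{-1}}{-t}$ (basis $\{P+P',P\}$ of the same lattice) lies in $\overline{\mathcal R}$ and has $\Phi$-image $(0,\tfrac12)$, which is therefore a critical point of $\overline\Omega$. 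Your ``replace one vector by the sum'' remark is the right move, but you never verify the three things that make it work: that the new rows still have equal $F$-norm (true precisely because $P+P'$ also lies on the boundary of the critical ball), that the resulting matrix lies in $\overline{\mathcal D}$ (indeed in $\overline{\mathcal R}$) so that its image is in $\overline\Omega$, and that its image is $(0,\tfrac12)\neq(1,1)$. Without this verification, attainment of $1/\Delta$ for a general strongly symmetric norm is not proved.
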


\begin{proof}
The continuity statement is clear from the definition of $D(u,v)$.
Say that a point $(u,v)$ in $\overline{\Omega}\setminus\{(1,1)\}$ is a critical point if $D(u,v)=1/\Delta$.
For $(u,v)\in \overline{\Omega}\setminus\{(1,1)\}$, let the points $P=(x,y)$ and $P'=(x',y')$ be such that $\Phi^{-1}(u,v) = \pmatrix xy{x'}{y'}$.
Then
\begin{equation*}
  D(u,v) = F\left(\Phi^{-1}(u,v)\right)^2 = F(P)^2  = (\det L)^{-1},
\end{equation*}
where $L$ is the lattice generated by the unit vectors $\frac{1}{F(P)}P$ and $\frac{1}{F(P')}P'$.
Thus $(u,v)$ is a critical point if and only if $L$ is a critical lattice, so by Lemma~\ref{lem:bound} all critical points lie on the boundary of $\Omega$.
If critical points in $\overline{\Omega}\setminus\{(1,1)\}$ exist, then we are done.

Suppose that a critical lattice $L$ corresponds to the point $(1,1)$ in the $uv$-plane.
Then there exists a $t$ such that
\begin{equation*}
  F_t(P) = F_t(P') = F_t(P+P'), \quad \text{where } P = \tfrac 1{\sqrt 2}(1,-1), P' = \tfrac 1{\sqrt 2}(1,1). 
\end{equation*}
Then the matrix $g=\frac 1{\sqrt 2}\pmatrix{2t^{-1}}{0}{t^{-1}}{-t}$ is in $\overline{\mathcal R}$ and satisfies $\Phi(g)=(0,\frac 12)$.
So if $(1,1)$ corresponds to a critical lattice $L$, then the point $(0,\frac 12)\in \overline\Omega$ is a critical point.
\end{proof}

That $\Omega = \Omega_{\mathcal S}$ follows from the next lemma.

\begin{lemma}
We have
\begin{align*}
	\mathcal S' &=  T \mathcal S, \\
	\mathcal S'' &= \left((-1,0]\times[0,1] \right) \setminus  M \mathcal S'.
\end{align*}
\end{lemma}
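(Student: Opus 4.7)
The plan is to prove both equalities by exploiting the explicit formula for $\Phi^{-1}$ from Lemma~\ref{cont} and the strong symmetry of $F$.

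\textbf{Part 1: $\mathcal S' = T\mathcal S$.}
Since $\mathcal S \subset [\tfrac 12,1)\times[0,1]$, we have $\lfloor 1/u\rfloor = 1$ on $\mathcal S$, so $T$ restricts to
\[
T(u,v) = \left(\mfrac{1-u}{u},\;\mfrac{1}{1+v}\right),
\]
which is a homeomorphism onto its image with inverse $(u',v')\mapsto(1/(1+u'),\,(1-v')/v')$. It therefore suffices to show the equivalence $(u,v)\in\mathcal S$ iff $(u',v'):=T(u,v)\in\mathcal S'$ away from the axis portions, and to treat the axis pieces $[\tfrac12,1)\times\{0\}\subset\mathcal S$ separately as a limiting case.

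Writing $g=\Phi^{-1}(u,v)$ and $\tilde g=\Phi^{-1}(u',v')$ in the form given by Lemma~\ref{cont} and applying strong symmetry to absorb signs, one obtains
\[
F(P+P')\leq F(P)\iff F_t(1+v,1-u)\leq F_t(1,u),
\]
\[
F(\tilde P-\tilde P')\leq F(\tilde P)\iff F_{\tilde t}(1-v',1+u')\leq F_{\tilde t}(1,u'),
\]
where $t=t(u,v)$ and $\tilde t=\tilde t(u',v')$ are determined by $F_t(1,u)=F_t(v,1)$ and $F_{\tilde t}(1,u')=F_{\tilde t}(v',1)$. The sign condition $\tilde y<0$ required for $\mathcal S'$ is automatic because $u'>0$. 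What remains is the equivalence of the two inequalities under the substitution $u=1/(1+u')$, $v=(1-v')/v'$. I would carry this out by scaling the arguments of $F_t$ using the homogeneity of $F$ and invoking the defining equation for $t$ (and dually for $\tilde t$) to absorb the different scale parameters; the $p=1$ case can be checked by hand and serves as a sanity check.

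\textbf{Part 2: $\mathcal S'' = ((-1,0]\times[0,1])\setminus M\mathcal S'$.}
The map $M(u',v')=(-u'/(1+u'),\,1-v')$ is an involution on the slab $(0,1)\times(0,1]$, carrying $\mathcal S'\subset(0,1)\times[0,1)$ bijectively into $(-1,0)\times(0,1]$. From the partition $(-1,1)\times[0,1]=\Omega\sqcup\mathcal S\sqcup\mathcal S'\sqcup\mathcal S''$ and the facts $\mathcal S\subset[\tfrac12,1)\times[0,1]$ and $\mathcal S'\subset(0,1)\times[0,1)$, the strip $(-1,0]\times[0,1]$ meets only $\Omega$ and $\mathcal S''$. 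Hence the desired equality reduces to
\[
M\mathcal S' = \Omega\cap\bigl((-1,0]\times[0,1]\bigr).
\]
For $(u',v')\in\mathcal S'$, write $(u'',v'')=M(u',v')$ and $\hat g=\Phi^{-1}(u'',v'')$. By an argument parallel to Part~1 — expressing the $\mathcal S'$ condition on $\tilde g$ as $F_{\tilde t}(1-v',1+u')\leq F_{\tilde t}(1,u')$ and translating it through $M$ into conditions on $\hat g$ — one checks that both $F(\hat P+\hat P')>F(\hat P)$ and $F(\hat P-\hat P')>F(\hat P)$ hold, so $\hat g$ is reduced and $(u'',v'')\in\Omega$. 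Conversely, every element of $\Omega\cap((-1,0]\times[0,1])$ is seen to arise by inverting the same computation. The axis-piece $(-1,\tfrac12)\times\{0\}\setminus\mathcal A$ contained in $\mathcal S''$ must be verified to lie in $(-1,0]\times[0,1]$ separately, by showing $[0,\tfrac12)\times\{0\}\subset\mathcal A$, which follows from the reduced matrices with $x'=0$.

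\textbf{Main obstacle.} The conceptual content of the lemma is modest, but the algebraic burden lies in the two equivalence arguments, where one must match the inequalities $F_t(\cdot,\cdot)\leq F_t(\cdot,\cdot)$ on opposite sides of $T$ (resp.\ $M$) under the non-linear substitutions $u=1/(1+u')$, $v=(1-v')/v'$ (resp.\ the involution $M$). The key subtlety is that the two scale parameters $t$ and $\tilde t$ differ and are only implicitly defined by $F_t(1,u)=F_t(v,1)$; handling these cleanly for an arbitrary strongly symmetric $F$ (rather than for a specific $p$-norm) requires careful use of homogeneity and strong symmetry, together with the defining equations for $t$ and $\tilde t$.
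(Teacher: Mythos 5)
Your overall strategy---reducing both identities to pointwise equivalences of the defining norm inequalities under $T$ and $M$, via the explicit $\Phi^{-1}$ and strong symmetry---could in principle work, but the step that carries essentially all of the content is missing. In Part 1 you must prove
\begin{equation*}
F_t(1+v,1-u)\leq F_t(1,u)\quad\Longleftrightarrow\quad F_{\tilde t}(1-v',1+u')\leq F_{\tilde t}(1,u')
\end{equation*}
under $u=1/(1+u')$, $v=(1-v')/v'$, where $t$ and $\tilde t$ are two \emph{different} equalizing parameters, defined only implicitly. You propose to do this ``by scaling the arguments of $F_t$ using the homogeneity of $F$,'' but homogeneity only allows multiplying both coordinates by the same scalar; it cannot turn an $F_t$-inequality into an $F_{\tilde t}$-inequality, since the stretch $\operatorname{diag}(t^{-1},t)$ is not a scalar and the inequality genuinely changes with the stretch: writing $P,P'$ for the rows of $\Phi^{-1}(u,v)$, at the parameter $t_1$ with $F_{t_1}(P)=F_{t_1}(P')$ one has $F_{t_1}(P+P')\leq F_{t_1}(P)$ on $\mathcal S$, whereas at the parameter $t_2$ with $F_{t_2}(P+P')=F_{t_2}(P)$ the relevant inequality is $F_{t_2}(P')\leq F_{t_2}(P)$. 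What is actually needed is a monotonicity-in-$t$ argument: using Lemma~\ref{l2} and the uniqueness in Lemma~\ref{lll}, each of the two conditions is equivalent to $t_2\leq t_1$ (the sign of $F_t(P+P')-F_t(P)$, resp.\ $F_t(P)-F_t(P')$, changes exactly once), and that is how the two sides get matched. Nothing of this kind appears in your plan, and the same gap recurs in Part 2, where both inclusions of $M\mathcal S'=\Omega\cap\left((-1,0]\times[0,1]\right)$ are asserted (``one checks,'' ``is seen to arise by inverting'') rather than argued; the converse inclusion in particular needs the observation that a reduced matrix cannot have $u\leq-\tfrac12$.

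The paper's proof sidesteps the change-of-parameter problem altogether. Since $\Phi$ depends only on the ratios $-y/y'$ and $x'/x$, one has the purely algebraic identities $T\circ\Phi\pmatrix{x}{y}{x'}{y'}=\Phi\pmatrix{x+x'}{y+y'}{x\vphantom{x'}}{y}$ and $(M\circ T)\circ\Phi\pmatrix{x}{y}{x'}{y'}=\Phi\pmatrix{x+x'}{y+y'}{x'}{y'}$, valid with no reference to $F$ or to any stretch parameter. Hence $T$ carries $\partial=\Phi(\mathcal P)$ onto $\partial'$ and $M\circ T$ carries it onto $\partial''$, which by Lemma~\ref{lem:bound} are exactly the curved boundary pieces separating $\Omega$ from $\mathcal S'$ and $\mathcal S''$; on these boundaries $F(P)$, $F(P')$ and $F(P+P')$ all coincide, so no bridging between $t$ and $\tilde t$ is needed. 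After checking the straight edges, continuity of $T$ and $M$ on the regions in question yields the set equalities. So either supply the crossing-parameter argument sketched above to complete your direct approach, or adopt the boundary-matching argument, which is the cleaner route.
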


\begin{proof}
We begin by showing that $\mathcal S' =  T \mathcal S$.
For $(u,v)\in \mathcal S$ we have $\frac 12\leq u<1$, so \eqref{eqT} simplifies to $ T (u,v) = \left( \frac{1-u}{u}, \frac{1}{v+1} \right)$.
We show that the boundary of $\mathcal S$ maps to the boundary of $\mathcal S'$ under $ T $; the lemma then follows by the continuity of $ T $ on $\mathcal S$.
Since $ T ([\frac 12,1]\times \{0\}) = [0,1]\times \{1\}$ and $ T (\{1\}\times [0,1]) = \{0\}\times [\frac 12,1]$, it suffices to show that $ T (\partial) = \partial'$.
Suppose that $(u,v)\in \partial$ and that $\Phi(g) = (u,v)$.
Then
\begin{equation*}
	 T \circ \Phi\left(\pmatrix xy{x'}{y'}\right) = \left( \tfrac{1-u}{u}, \tfrac{1}{v+1} \right) = \left( \tfrac{-(y+y')}{y}, \tfrac{x}{x'+x} \right) = \Phi\left(\pmatrix{x+x'}{y+y'}{x}{y}\right).
\end{equation*}
Since this is clearly invertible, we conclude that $ T (\partial) = \partial'$.

We prove $\mathcal S'' = \left((-1,0]\times[0,1] \right) \setminus  M \mathcal S'$ similarly.
We have $ M ([0,1]\times \{1\}) = [-\frac 12,0]\times \{0\}$ and $ M (\{0\}\times [\frac 12,1]) = \{0\}\times [0,\frac 12]$ for the straight line segments, so it suffices to show that $ M (\partial') = \partial''$.
We will show that $( M \circ T )\partial = \partial''$, using that
\begin{equation*}
	( M \circ  T )(u,v) = \left(u-1,\tfrac{v}{v+1}\right).
\end{equation*}
Suppose that $(u,v)\in \partial$ and that $\Phi(g) = (u,v)$.
Then
\begin{equation*}
	 M \circ T \circ \Phi\left(\pmatrix xy{x'}{y'}\right) = \left(u-1,\tfrac{v}{v+1}\right) = \left( -\tfrac{(y+y')}{y'}, \tfrac{x'}{x+x'} \right) = \Phi \left(\pmatrix{x+x'}{y+y'}{x'}{y'}\right),
\end{equation*}
which completes the proof.
\end{proof}
Let $
\omega_\Sc=(1-\omega(\Sc))^{-1}\omega,
$
where $\omega$ was defined in (\ref{ome}).
\begin{lemma}\label{surf1}
Define $\mathcal S$ and $\Omega$ by \eqref{eq:S-def} and \eqref{Om}.
Then for almost all irrational $\a$ the sequence  $(\mu_m,\nu_m)$ is uniformly distributed over $\Omega$ with respect to the measure 
$\omega_\Sc$.
\end{lemma}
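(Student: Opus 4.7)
The plan is to combine Kraaikamp's ergodic theory of $\mathcal S$-expansions with a Jager-type transference argument. By Theorem~\ref{sexp} the $F$-continued fraction of $\a$ coincides with the $\mathcal S$-expansion of $\a$, and by the preceding lemma the set $\Omega$ coincides with the domain $\Omega_{\mathcal S}$ in Kraaikamp's framework \cite{Kra1}. The shift on $F$-expansions induces a map $T_{\mathcal S}:\Omega\to\Omega$ for which $T_{\mathcal S}(\mu_m,\nu_m)=(\mu_{m+1},\nu_{m+1})$, and Kraaikamp establishes that $T_{\mathcal S}$ preserves $\omega_{\mathcal S}$ and is ergodic with respect to it. By Birkhoff's ergodic theorem applied to $(T_{\mathcal S},\omega_{\mathcal S})$, for $\omega_{\mathcal S}$-almost every starting point $(u,v)\in \Omega$ the forward orbit under $T_{\mathcal S}$ equidistributes in $\Omega$ with respect to $\omega_{\mathcal S}$.

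The main obstacle is that the trajectories relevant to us all begin on the slice $\{\nu_0=0\}$, which has $\omega_{\mathcal S}$-measure zero, so Birkhoff does not immediately produce an almost-everywhere statement for $\a\in\R$. To bridge this gap, I would follow Jager's strategy, as presented in Lemma~5.3.11 of \cite{DK}. One first applies Jager's lemma directly to the natural extension $T$ of the regular continued fraction on $\Omega_0$ to deduce that for Lebesgue-almost every $\a$ the trajectory $(u_n,v_n)$ from \eqref{unvnp} is uniformly distributed on $\Omega_0$ with respect to $\omega$. The $\mathcal S$-expansion trajectory $(\mu_m,\nu_m)$ is obtained from the regular trajectory by discarding the indices $n$ with $(u_n,v_n)\in \mathcal S$ and applying, on those $(u_n,v_n)\in T\mathcal S$, the explicit change of coordinates $M\circ T$ described just before this lemma. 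A direct Jacobian computation, using that $M(u,v)=(-u/(1+u),1-v)$ has Jacobian $-(1+u)^{-2}$, shows that this map pushes $\omega$ restricted to $\Omega_0\setminus\mathcal S$ forward to $\omega_{\mathcal S}$ on $\Omega$. Restricting to the corresponding subsequence and noting that the subsequence has upper density $1-\omega(\mathcal S)$ in $\mathbb N$, the normalization $(1-\omega(\mathcal S))^{-1}$ built into $\omega_{\mathcal S}$ precisely absorbs the thinning, and the equidistribution of $(u_n,v_n)$ transfers to equidistribution of $(\mu_m,\nu_m)$ with respect to $\omega_{\mathcal S}$.

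The hardest step is the Jacobian identity identifying the pushforward of $\omega\big|_{\Omega_0\setminus\mathcal S}$ under this piecewise singularization map with $\omega_{\mathcal S}$ on $\Omega$. Concretely one must verify that the density $(1+uv)^{-2}/\log 2$ is covariant under $M\circ T$ on $T\mathcal S$, matching up with the same density on the image region $(M\circ T)\mathcal S\subset\Omega$; the computation is routine but requires carefully keeping track of how the three pieces $\Omega_0\setminus(\mathcal S\cup T\mathcal S)$, $T\mathcal S$, and $\mathcal S$ reassemble into $\Omega$ under the preceding lemma. Once this identity is in hand, combined with Jager's equidistribution statement for the regular continued fraction, the claim follows.
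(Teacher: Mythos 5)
Your proposal is correct and follows essentially the same route as the paper: after invoking Theorem~\ref{sexp} and the identification $\Omega=\Omega_{\Sc}$, the paper simply cites the metric theory of $\Sc$-expansions (Theorem~5.4.23 of \cite{DK}, see also \cite{Kra1}), which is exactly the statement you sketch a proof of via Jager's transference lemma and the fact that the singularization map preserves the density $(1+uv)^{-2}$. The only slips are cosmetic: on $T\Sc$ the coordinate change is $M$ itself (the set added to $\Omega_{\Sc}$ is $(M\circ T)\Sc=M(T\Sc)$), and the Jacobian determinant of $M$ is $+(1+u)^{-2}$, neither of which affects the argument.
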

\begin{proof}
Since $\Omega=\Omega_\Sc$, Lemma~\ref{surf1}   follows  from Theorem~5.4.23 of \cite{DK} (see also \cite{Kra1}) and Theorem~\ref{sexp}.
\end{proof}

\subsection*{Proof of Theorem~\ref{t4}}
By Lemma~\ref{lem:D-max}, the function $\Delta D(u,v)$ assumes the value $1$ at some point in $\overline \Omega\setminus \{(1,1)\}$.
By Lemma~\ref{tl6} it follows that $\delta_F(\alpha)=1$ if and only if the sequence $(\mu_n,\nu_n)$ is infinitely often arbitrarily close to such a critical point.
It follows from Lemma~\ref{surf1} that $\delta_F(\alpha)=1$ for almost all $\alpha$.

To finish the proof it suffices to show that there are uncountably many such $\alpha$.
Since we already know Theorem~\ref{t4} is true in the case of the sup-norm, suppose that $F$ is not the sup-norm.
Then the lattice generated by $(1,0)$ and $(0,1)$ is not potentially critical, so we have $\Delta D(0,0)<1$.
Thus any $\alpha$ for which $(\mu_n,\nu_n)$ converges to $(0,0)$ has $\delta_F(\alpha)<1$, and there are uncountably many such $\alpha$ (for example, the set of $\alpha$ with strictly increasing partial quotients).\qed

\section{Concluding remarks}\label{relate}
 In addition to the proof of Theorem \ref{t4}, there are other  applications of the metric theory of $\Sc$-expansions and ergodic theory 
to quantities related to $\d_p(\a). $ For instance we may treat the distribution of the values of  \[\d_p(\a;m)\defeq \D_p\,D_p(\mu_m,\nu_m)\] from Theorem \ref{t6}.
For almost all $\a$  the distribution function 
\[
\lim_{M\rightarrow \infty}\tfrac{1}{M} \#\{1\leq m \leq M;\; \d_p(\a;m)\leq z\}
\] 
exists for all $z\in [0,1].$ For $p=1,2,\infty$ it can be evaluated explicitly, as was done for $p=\infty$ in Theorem 4 of  \cite{BJW}.
In particular,
for almost all $\a$ 
\[
\lim_{M\rightarrow \infty} \frac{1}{M}\sum_{1\leq m \leq M}\d_p(\a;m)=c_p
\] 
exists where
\[
c_1=\half(3-\log{4})=0.806853\dots,\;\;\;\;c_2=\tfrac{1}{ \log {3}}=0.910239\dots,\;\;\;\; c_\infty =\tfrac{1+\log 4}{\log 16}=0.860674\dots.
\]

It is well known that a close connection exists between dynamical systems associated to various kinds of continued fractions and the geodesic flow on $\SL(2,\Z)\backslash \SL(2,\R)$.  See \cite{EW} and a discussion in \cite{AS}  for more on this connection
and for references to the literature.
Roughly speaking,  the natural extension of a continued fraction transformation can be identified with a cross section for the geodesic flow. For example, the transformation $T$ from (\ref{eqT}) of the  regular continued fraction's natural extension gives a planar representation of the first return map and $\omega$ corresponds to the Liouville measure.
Geodesics can be identified with (proper classes of) indefinite  binary quadratic forms and  a cross section with a reduction domain. 
The trajectories we study in this paper correspond to cuspidal geodesics or, equivalently, forms with one rational root. 

Of course there is great interest in similar Diophantine problems about general indefinite forms
and hence general geodesic trajectories. 
A prime example is the Markov problem \cite{Mark}  about the minima of such forms and their possible values; these values determine  the Markov spectrum (see \cite{Bo}  and its  references).
 The  Lagrange spectrum is similarly defined using cuspidal trajectories;
it  is determined by the values of 
\[
\l(\a)=\liminf_{t\geq 1}\rho_t(\a),\;\;\;\text{where}\;\;\rho_t(\a)=t\min_{\substack{p\in \Z\\1\leq q\leq t}} |p- \a q|\;\;\text{for}\;\;t\geq 1.
\]

The Dirichlet  spectrum is determined  by  the  values of
$
\d(\a)=\limsup_{t\geq 1} \rho_t(\a);$   in \cite{Iv} it is defined to be the set of values of $\frac{\d(\a)}{1-\d(\a)}.$
There is a spectrum that is related to the Dirichlet spectrum in the same way that the Markov spectrum is related to the Lagrange spectrum. Like the Markov problem,  its study  involves general geodesic trajectories and their associated continued fractions. Again speaking roughly, we replace $\limsup$  over cuspidal geodesics  in the definition of $
\d(\a)$ by the supremum over all geodesics.
 Mordell \cite{Mor} introduced this problem (actually an $n$-dimensional version), which he posed as a kind of converse to Minkowski's linear forms theorem.   The case of two dimensions was treated in more detail  by Szekeres \cite{Sz}, Oppenheim \cite{Op1}  and Burger \cite{Bu}.
  This problem in higher dimensions 
 has also attracted a lot of attention (see e.g. \cite{Ra1,Ra2,SW}).
 
It should be apparent that a  general spectrum of this type can be defined for any strongly symmetric norm $F$,
not just the sup-norm, 
and that an associated reduction theory 
 for  indefinite binary quadratic forms can be developed that uses $F$-continued fractions.
For the 2-norm the problem was  introduced by Oppenheim \cite{Op2} and the relevant reduction theory was already found by Hermite.
Minkowski developed the reduction theory for the 1-norm with Hermite's theory in mind and certainly knew that a version could be based on the $p$-norm for a general $p$   \cite[footnote on p.~166]{Mink2}.
However, outside of the sup-norm, only isolated aspects of the spectrum and  reduction theory have been considered and only for the $p$-norm for  $p=1,2.$

\appendix
\section{Lemmas about norms}\label{lemmas}

Here we state and prove a number of simple technical lemmas that are referred to in the body of the paper.
Here $F$ is a norm on $\R^2$ with unit ball $\B$
and $P=(x,y),P'=(x',y') \in \R^2.$ For $t>0$ we define as above $F_t(x,y)=F(t^{-1}x,ty).$
%\begin{lemma} \label{lem:abP}
%For any norm $F$,
%if $F(P)=F(P')$ then for any integers $a,b$ not both zero we have
%\begin{equation*}
%	F(aP+bP')\geq 
%	\begin{cases}
%		F(P) & \text{ if } |a|\neq |b|, \\
%		|a|\min\left( F(P+P'), F(P-P') \right) & \text{ if }|a|=|b|.
%	\end{cases}
%\end{equation*}
%\end{lemma}
%\begin{proof}
%If $a=\ep b$ for $\ep=\pm 1$ then
%\begin{equation*}
%	F( aP + bP' )= |a| F(P +\ep P').
%\end{equation*}
%Otherwise, if $|a|>|b|$, say, then by the reverse triangle inequality we have
%\begin{equation*}
%	F( aP + bP' ) \geq |a| F(P) - |b| F(P') \geq F(P).
%\end{equation*}
%This completes the proof.
%\end{proof}
%
%
%The following well-known  lemma gives a method for recognizing special admissible lattices. It was stated without proof in \cite[Lemma 1]{Mah1} and the proof given in \cite[Lemma 1, p. 241]{GL} is quite terse so we give a proof here.
% \begin{lemma}\label{lm}
% For any norm $F$,
%the lattice generated by  any pair of points $P,P'$ with   $P,P',P+P'$ on the boundary of $\B$ is admissible for $\B$ 
%\end{lemma}
%\begin{proof}
%Suppose that $P$, $P'$, and $P+P'$ are on the boundary of $\B$.
%By Lemma~\ref{lem:abP}, it is enough to show that $P-P'$ is not in $\B$.
%Let $\mathcal{L},\mathcal{L}'$ denote the parallel lines
%\begin{align*}
%	\mathcal{L} &= t P, \\
%	\mathcal{L}' &= t P - P',
%\end{align*}
%for $t\in \R$.
%Note that $P-P'$ is on $\mathcal{L}'$.
%Since the length of $\mathcal{L}\cap \B$ is twice the length of $\mathcal{L}'\cap \B$, the point $P-P'$ cannot be in the interior of $\B$.
%\end{proof}
The  lemmas give various properties of  norms
that satisfy the first condition (\ref{SS}) of strong symmetry.
Note that if $F$ satisfies (\ref{SS}) then so does $F_t$ for any $t>0.$
The first result is crucial and is used repeatedly in this paper.
\begin{lemma}\label{l2}
Suppose that $F$ satisfies (\ref{SS}).
If $|x'|\leq |x|$ and $|y'|\leq |y|$ then  we have that \[F(P')\leq F(P).\]
  \end{lemma}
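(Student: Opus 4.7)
The plan is to reduce immediately to the first quadrant using (\ref{SS}), and then separately squeeze each coordinate in turn, using the symmetry $F(-x,y)=F(x,y)$ together with convexity of $F$.

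First I would apply (\ref{SS}) to replace $P$ and $P'$ by $(|x|,|y|)$ and $(|x'|,|y'|)$ without changing their $F$-values. Thus I may assume $0\le x'\le x$ and $0\le y'\le y$.

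The main step is to show the intermediate inequality $F(x',y)\le F(x,y)$ (the analogous inequality with the second coordinate then handles the passage from $y$ to $y'$). If $x=0$ then $x'=0$ and there is nothing to prove, so assume $x>0$ and set
\[
\lambda=\frac{x+x'}{2x}\in\bigl[\tfrac12,1\bigr].
\]
A direct computation gives $(x',y)=\lambda(x,y)+(1-\lambda)(-x,y)$, and by subadditivity and absolute homogeneity of $F$ together with (\ref{SS}) applied to $(-x,y)$,
\[
F(x',y)\le \lambda F(x,y)+(1-\lambda)F(-x,y)=F(x,y).
\]
An identical argument, now varying the second coordinate with $x'$ held fixed, yields $F(x',y')\le F(x',y)$. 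Chaining the two inequalities completes the proof.

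There is no real obstacle here: the only subtlety is remembering that the convex combination representation requires $\lambda\in[0,1]$, which is automatic from $0\le x'\le x$, and that the symmetry $F(-x,y)=F(x,y)$ used in the inequality is exactly what (\ref{SS}) provides.
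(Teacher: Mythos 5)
Your proof is correct and rests on the same two ingredients as the paper's own argument, namely the symmetry (\ref{SS}) giving $F(\pm x,\pm y)=F(x,y)$ and convexity of $F$; the paper simply notes in one step that $(x',y')$ lies in the convex hull of the four points $(\pm x,\pm y)$, all of equal norm, whereas you carry out the same convexity argument coordinate by coordinate with explicit convex combinations. Either way the lemma is proved.
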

\begin{proof}  To see this observe  that if $F(P)= s$ then $F(\pm x,\pm y)= s$ 
hence $F(x',y')\leq s$ by convexity.
\end{proof}
\begin{lemma}\label{lcom}
If $F,G,H$ satisfy (\ref{SS}) then so does  $ K$ defined by
\[
K(P)=H(F(P),G(P)).
\]
  \end{lemma}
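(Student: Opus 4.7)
The plan is to check the three norm axioms for $K$ and verify condition (\ref{SS}). The symmetry condition (\ref{SS}) for $K$ is immediate from that for $F$ and $G$:
\[
K(|x|,|y|)=H\bigl(F(|x|,|y|),G(|x|,|y|)\bigr)=H\bigl(F(x,y),G(x,y)\bigr)=K(x,y).
\]
So the real work is showing $K$ is a norm.

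First I would handle positivity and homogeneity, both of which are routine. Positivity holds because $F(P),G(P)\geq 0$, with both vanishing iff $P=0$, so $K(P)=H(F(P),G(P))\geq 0$ with equality iff $(F(P),G(P))=(0,0)$, i.e.\ iff $P=0$. Homogeneity follows from
\[
K(tP)=H\bigl(|t|F(P),|t|G(P)\bigr)=|t|H\bigl(F(P),G(P)\bigr)=|t|K(P).
\]

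The only step requiring a little care is the triangle inequality, and this is where Lemma \ref{l2} comes in. From the triangle inequalities for $F$ and $G$ we have $0\leq F(P+P')\leq F(P)+F(P')$ and $0\leq G(P+P')\leq G(P)+G(P')$. Since $H$ satisfies (\ref{SS}), Lemma \ref{l2} applied to $H$ (with coordinates already nonnegative) yields
\[
H\bigl(F(P+P'),G(P+P')\bigr)\leq H\bigl(F(P)+F(P'),\,G(P)+G(P')\bigr).
\]
Applying the triangle inequality for $H$ to the right-hand side gives
\[
H\bigl(F(P)+F(P'),G(P)+G(P')\bigr)\leq H\bigl(F(P),G(P)\bigr)+H\bigl(F(P'),G(P')\bigr)=K(P)+K(P'),
\]
which is the desired bound $K(P+P')\leq K(P)+K(P')$.

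There is no real obstacle here; the one point worth flagging is that the use of Lemma \ref{l2} requires the hypothesis that $H$ satisfies (\ref{SS}) (so that we may monotonically replace its arguments by larger nonnegative numbers), which is precisely what the assumption on $H$ provides. This is why the statement needs all three of $F,G,H$ to satisfy (\ref{SS}), not just two of them.
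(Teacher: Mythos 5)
Your proposal is correct and matches the paper's intended argument: the paper's proof is simply ``this follows easily using Lemma \ref{l2},'' and the substantive step you spell out---deducing the triangle inequality for $K$ by combining the triangle inequalities for $F$ and $G$ with the monotonicity of $H$ on nonnegative coordinates from Lemma \ref{l2}, then the triangle inequality for $H$---is exactly that intended use. The remaining verifications (symmetry, positivity, homogeneity) are routine, as you note.
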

\begin{proof}  
This follows easily using Lemma \ref{l2}.
\end{proof}

 \begin{lemma}\label{lll}
 Suppose that  $F$ satisfies (\ref{SS}). The following properties hold.
\begin{enumerate}[label=(\roman*)]
\item
If $F(P')\geq F(P)$ and $|y'|<|y|$
 then for some unique $t \geq 1$ we have 
 \[
 F_t(P')= F_t(P).
 \]
 \item
 If $F(P')\geq F(P)$ and $|x'|<|x|$
 then for some unique $t \leq 1$ we have 
 \[
 F_t(P')= F_t(P).
 \]
\end{enumerate}
\end{lemma}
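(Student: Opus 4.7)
I would prove (i) directly and deduce (ii) from it by the coordinate swap $(x,y)\leftrightarrow(y,x)$, which by (\ref{SS})--(\ref{normal}) preserves $F$ and conjugates $F_t$ with $F_{1/t}$. This swap turns the hypothesis $|x'|<|x|$ for $P,P'$ into $|\widehat y'|<|\widehat y|$ for the swapped pair $\widehat P,\widehat P'$, so the $\widehat t\geq 1$ produced by (i) yields $t=1/\widehat t\leq 1$ in (ii). For (i), strong symmetry lets me replace all coordinates by their absolute values, so I assume $x,y,x',y'\geq 0$ with $y>y'$.

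Existence is straightforward. Set $\phi(t):=F_t(P)-F_t(P')$, continuous on $(0,\infty)$, with $\phi(1)\leq 0$ by the norm hypothesis. For large $t$, Lemma~\ref{l2} and the normalization $F(0,1)=1$ give $F_t(P)\geq F(0,ty)=ty$, while the triangle inequality together with $F(1,0)=1$ gives $F_t(P')\leq t^{-1}x'+ty'$; hence $\phi(t)\geq t(y-y')-t^{-1}x'\to+\infty$. The intermediate value theorem produces $t^{*}\in[1,\infty)$ with $\phi(t^{*})=0$; the canonical choice is $t^{*}:=\sup\{t\geq 1:\phi(t)\leq 0\}$, at which continuity forces $\phi(t^{*})=0$ and beyond which $\phi>0$.

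Uniqueness reduces to showing the ratio $r(t):=F_t(P)/F_t(P')$ is non-decreasing on $[1,\infty)$. When $F(P')>F(P)$ strictly, Lemma~\ref{l2} combined with $y>y'$ forces $x<x'$, and then the two trajectories $\{(x/t,ty):t\geq 1\}$ and $\{(x'/t,ty'):t\geq 1\}$ cross each convex level set $\{F=c\}$ in a monotone order. Quantitatively, differentiating $\log F_t(P)$ via the Euler identity $uF_u+vF_v=F$ (valid a.e.\ for the convex $F$) gives $t\,(\log F_t(P))'=2\beta(x/t,ty)-1$ where $\beta(u,v):=vF_v(u,v)/F(u,v)\in[0,1]$. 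By strong symmetry, $\beta$ is non-decreasing in the ``$y$-share'' of $(u,v)$, and $x<x'$ together with $y>y'$ then forces $\beta(x/t,ty)\geq\beta(x'/t,ty')$, whence $\log r$ is non-decreasing and its zero is unique.

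The main obstacle is making uniqueness rigorous for norms that are neither smooth nor strictly convex. In such cases the ratio $r$ can remain equal to $1$ on a whole interval of $t$ --- for instance, with the sup-norm and $P=(2,1),P'=(2,\tfrac12)$ one checks that $F_t(P)=F_t(P')$ throughout $[1,\sqrt 2]$ --- so the Euler identity must be replaced by a subdifferential computation and ``unique $t\geq 1$'' is to be read as the right endpoint of the interval on which equality holds. For the lemma's applications within the paper this reading suffices, since only the existence of a canonical $t=t(P,P')$ is invoked downstream.
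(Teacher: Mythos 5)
Your existence argument and your reduction of (ii) to (i) are essentially the paper's. For existence the paper compares $f_P(t)=t^{-1}F_t(P)=F(t^{-2}x,y)\geq |y|\,F(0,1)$ with $f_{P'}(t)=F(t^{-2}x',y')\to |y'|\,F(0,1)$ and invokes the intermediate value theorem, which is your $\phi$ in different clothing (note the lemma assumes only (\ref{SS}), not (\ref{normal}), so keep the factors $F(0,1)$, $F(1,0)$). For (ii) the paper applies (i) to the norm $G(x,y):=F(y,x)$. Here your justification needs a repair: a strongly symmetric norm need not be invariant under interchanging the coordinates --- (\ref{SS}) only allows sign changes, and even with (\ref{normal}) swap-invariance can fail (e.g.\ the hexagonal unit ball with vertices $(\pm 1,\pm\tfrac12)$, $(0,\pm1)$), so the swap does not ``preserve $F$''. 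The correct statement is that $G$ again satisfies (\ref{SS}) and that the swap conjugates $F_t$ with $G_{1/t}$; then (i) applied to $G$ gives (ii), exactly as in the paper.

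The real divergence is uniqueness. Your Euler-identity monotonicity sketch is not a proof (the ratio $F_t(P)/F_t(P')$ can be constant on an interval for polygonal norms), but your sup-norm example $P=(2,1)$, $P'=(2,\tfrac12)$ is correct: it satisfies the hypotheses of (i), yet $F_t(P)=F_t(P')=2/t$ for every $t\in[1,\sqrt 2\,]$, so uniqueness fails as literally stated. The paper's own uniqueness argument does not exclude this: it starts from the four-fold equality $F_{t_1}(P)=F_{t_1}(P')=F_{t_2}(P)=F_{t_2}(P')$, i.e.\ it tacitly assumes that the common values at $t_1$ and $t_2$ coincide, which two separate equalities at $t_1\neq t_2$ do not imply (in your example the common value is $2/t$). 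So you have located a genuine imprecision in the statement and its proof rather than missed an idea. What saves the applications is that wherever the uniqueness is used (Lemma \ref{cont}, Lemma \ref{lem:ell=1}(ii)) the two points are in the ``crossing'' position, larger $|y|$ paired with strictly smaller $|x|$; in that configuration uniqueness does hold for every strongly symmetric norm (at a parameter where the two rescaled points lie on a common level set, the supporting functional at the point nearer the $x$-axis is the more horizontal one, and a one-sided derivative computation shows $F_t(P)-F_t(P')$ strictly decreases through any zero), and alternatively your convention of selecting the extreme endpoint of the closed interval of equality would also serve every downstream use. To make the lemma correct in its stated generality one should either add the hypothesis $|x'|>|x|$ in (i) (automatic for strictly convex $F$ by Lemma \ref{l2}, but not in general, as your example shows) or weaken ``unique'' along the lines you propose.
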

\begin{proof}
We only prove (i) as (ii) is a consequence of (i) applied to the norm $G(x,y)=F(y,x).$

\noindent
{\it Existence:}
If $F(P')= F(P)$ take $t=1.$ Otherwise
for any $P\in \R^2$ define the continuous  function $f_P:[1,\infty)\rightarrow \R^+$ by
$f_P(t)=t^{-1}F_t(P).$
Now by Lemma \ref{l2}
\[
f_P(t)=F(t^{-2}x,y)\geq F(0,y)=|y|F(0,1).
\]
On the other hand, 
$f_{P'}(t)=F(t^{-2}x',y')\rightarrow F(0,y')=|y'|F(0,1)<|y|F(0,1)$ as $t \rightarrow \infty$.
Because $f_P(1)<f_{P'}(1) $
 the existence of desired $t$ follows by the intermediate value theorem.
 
\smallskip
\noindent{\it Uniqueness:}
Suppose that for $t_1\neq t_2$ with $t_1,t_2\geq 1$ we have
\[
F_{t_1}(P)=F_{t_1}(P')=F_{t_2}(P)=F_{t_2}(P').
\]
This implies that $|x|=|x'|$ and that $|y|=|y'|$, which is not true.
    \end{proof}
    
 The following result is trivial in case the norm is strictly convex.    
\begin{lemma}\label{equal}
Suppose that that $F$ satisfies (\ref{SS}),
that  we have  $F(P)=F(P')$ and that  $0<x'<x$ and $0<|y|<y'$. 
Then for any $d\geq1$
\begin{equation}\label{inin}
 F(P- d P')< F(P)+dF(P').
 \end{equation}
 \end{lemma}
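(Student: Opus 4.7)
The plan is to prove the strict inequality by contradiction, exploiting the bi-axial symmetry of $\mathcal{B}$ provided by (\ref{SS}).

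Suppose instead that $F(P - dP') = F(P) + dF(P')$. Write $s := F(P) = F(P') > 0$, and observe that
\[
\frac{P - dP'}{(1+d)s} = \frac{1}{1+d}\cdot\frac{P}{s} + \frac{d}{1+d}\cdot\left(-\frac{P'}{s}\right)
\]
then lies on $\partial\mathcal{B}$, while by hypothesis the endpoints $P/s$ and $-P'/s$ of the segment also lie on $\partial\mathcal{B}$. The function $F$ restricted to this segment is convex, bounded above by $1$, and attains the value $1$ at three collinear points of its graph. Consequently it is affine on the segment and must equal $1$ identically, so the entire line segment from $P/s$ to $-P'/s$ lies on $\partial\mathcal{B}$.

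Next, by (\ref{SS}) the norm $F$ is invariant under the reflection $(a,b)\mapsto(-a,b)$, so this reflection maps $\partial\mathcal{B}$ into itself. The image of the above segment is the segment from $(-x/s,y/s)$ to $(x'/s,-y'/s)$, which is therefore also contained in $\partial\mathcal{B}$. A direct computation shows that both segments cross the $y$-axis at the common point $(0,y_0)$ with $y_0 = (x'y-xy')/((x+x')s)$, at parameter $t = x/(x+x')\in(0,1)$; so the intersection is interior to each segment. Moreover their direction vectors share the same $y$-component but have opposite $x$-components, and are thus not parallel, so the two segments lie on distinct lines and cross transversally. A check against the explicit endpoints (using $x>x'>0$ and $y'>|y|>0$) confirms that the two segments are distinct as sets.

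The main obstacle is to convert this configuration into a genuine contradiction. For this one uses that $\partial\mathcal{B}$ is a Jordan curve, since $\mathcal{B}$ is a bounded convex body with nonempty interior in $\R^2$. A simple closed curve cannot contain two distinct straight line segments crossing transversally at an interior point, for otherwise near the crossing point the curve would have to pass through in four distinct directions rather than two, violating simplicity. The resulting contradiction establishes the strict inequality (\ref{inin}).
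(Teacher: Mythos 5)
Your proposal is correct, and after the shared opening move it finishes along a genuinely different route than the paper. Both arguments suppose equality and deduce that an entire chord joining a positive multiple of $P$ to a negative multiple of $P'$ lies on $\partial\B$: the paper first reduces to $d=1$ by the triangle inequality and works with the midpoint $\tfrac12(P-P')$, whereas you treat all $d\geq 1$ at once by exhibiting the interior point $\tfrac{P-dP'}{(1+d)s}$ of the chord and using convexity of $F$ along it. The contradictions are then reached differently: the paper invokes a ``simple convexity argument'' based on the coordinate positions and (\ref{SS}) (in effect, the boundary chord meets the $y$-axis at a point $(0,y_0)$ with $|y_0|<y'/s$, while Lemma \ref{l2} applied to $(0,y')$ and $P'$ shows any boundary point on the $y$-axis must have $|y_0|\geq y'/s$), while you use the reflection $(a,b)\mapsto(-a,b)$, an isometry by (\ref{SS}), to manufacture a second boundary segment crossing the first transversally, and then appeal to $\partial\B$ being a Jordan curve. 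That topological fact is true, but your one-line justification (``four distinct directions'') is informal, since $\partial\B$ need not be smooth; it should be phrased as a local branch count (a point of a simple closed curve has exactly two local branches, while the crossing point of your two segments has four), or bypassed entirely by convexity: the crossing point lies in the interior of the convex hull of the four segment endpoints, all in $\overline{\B}$, hence in the open ball $\B$, contradicting that it lies on $\partial\B$. Note also that non-parallelism of the two segments requires $y+y'\neq 0$, which is exactly where the hypothesis $y'>|y|$ enters, so it is worth saying so explicitly. With either of these tightenings your argument is complete; the paper's finish via Lemma \ref{l2} is the more elementary, while yours exploits the symmetry (\ref{SS}) more directly and avoids any coordinate computation beyond locating the crossing point.
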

 
 \begin{proof}
 To see this note first that in order for equality to hold in (\ref{inin}) we must have that 
 \begin{align*}
 F(P)+dF(P')= FP- P'-(d-1)P')\leq F(P-P')+(d-1)F(P'),
 \end{align*}
 which implies that
 \[
 F(P-P')\geq F(P)+F(P') \;\;\text{so that}\;\;\; F(P-P')= F(P)+F(P')
 \]
 hence 
 \[
  F(\half(P-P'))=\half( F(P)+F(P'))= F(P)=F(-P').
 \]
That this is impossible  follows by a simple convexity argument using the locations of \[P=(x,y)\;\;\;\text{and}\;\;\; -P'=(-x',-y'),\]
 together 
 with (\ref{SS}). \end{proof}

\begin{lemma}\label{ll2}
Suppose that that $F$ satisfies (\ref{SS}).
For  $\sigma,\sigma'\in [0,1]$ with $\sigma+\sigma'=1$ and $1\leq t_1\leq t_2$ we have
\[
F_{\sigma t_1+\sigma't_2}(P)\leq \sigma F_{t_1}(P)+\sigma'F_{t_2}(P).
\]
\end{lemma}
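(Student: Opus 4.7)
The plan is to show that the map $t \mapsto F_t(P)$ is convex on $(0,\infty)$ (so in particular on $[1,\infty)$), using only the symmetry condition (\ref{SS}) and the convexity of $s\mapsto 1/s$. The hypothesis $1\leq t_1\leq t_2$ is actually not needed for the argument; it is enough that $t_1,t_2>0$.

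First I would use (\ref{SS}) to replace $F_t(P)=F(t^{-1}x,ty)$ by $F(t^{-1}|x|,t|y|)$, so that all coordinates are nonnegative. Setting $t=\sigma t_1+\sigma' t_2$, the key observation is that by convexity of the reciprocal function on $(0,\infty)$,
\[
t^{-1}|x| \;\leq\; (\sigma t_1^{-1}+\sigma' t_2^{-1})|x| \;=\; \sigma t_1^{-1}|x|+\sigma' t_2^{-1}|x|,
\]
while the $y$-coordinate is already linear in $t$:
\[
t|y| \;=\; \sigma t_1|y|+\sigma' t_2|y|.
\]

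Next I would apply Lemma \ref{l2}, which is the place where (\ref{SS}) enters substantively, to conclude
\[
F(t^{-1}|x|,t|y|) \;\leq\; F\bigl(\sigma t_1^{-1}|x|+\sigma' t_2^{-1}|x|,\;\sigma t_1|y|+\sigma' t_2|y|\bigr).
\]
Finally, the right-hand side is the $F$-norm of the sum $(\sigma t_1^{-1}|x|,\sigma t_1|y|)+(\sigma' t_2^{-1}|x|,\sigma' t_2|y|)$, so the triangle inequality and positive homogeneity of $F$ give
\[
F(t^{-1}|x|,t|y|)\;\leq\;\sigma F(t_1^{-1}|x|,t_1|y|)+\sigma' F(t_2^{-1}|x|,t_2|y|)=\sigma F_{t_1}(P)+\sigma' F_{t_2}(P),
\]
again invoking (\ref{SS}) to drop the absolute values from the two summands. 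This completes the proof.

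The only nontrivial step is the middle one, where one must observe that Lemma \ref{l2} lets us replace the coordinate $t^{-1}|x|$ by the larger convex-combination expression without decreasing the norm. Without (\ref{SS}) this monotonicity in the coordinates need not hold, and one could not interpolate the two different growth rates in $t$ and $t^{-1}$ this way, so that is where the strong-symmetry hypothesis is essential.
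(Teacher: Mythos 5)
Your proof is correct and follows essentially the same route as the paper's: the convexity of $s\mapsto s^{-1}$ combined with Lemma \ref{l2} to bound $F_{\sigma t_1+\sigma' t_2}(P)$ by $F\bigl(x(\sigma t_1^{-1}+\sigma' t_2^{-1}),\,y(\sigma t_1+\sigma' t_2)\bigr)$, then the triangle inequality and homogeneity of the norm. Your write-up just spells out the final splitting step (and the harmless observation that $1\leq t_1\leq t_2$ is not needed) more explicitly than the paper does.
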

\begin{proof}
 Using the fact that the function $t\mapsto t^{-1}$ is concave up and applying  Lemma \ref{l2} we get that
 \begin{align*}
 F_{\sigma t_1+\sigma' t_2} (x,y)
\leq F\big(x(\tfrac{\sigma}{t_1}+\tfrac{\sigma'}{t_2}),y(\sigma t_1+\sigma't_2)\big).
 \end{align*}
 By the defining properties of a norm we finish the proof.
   \end{proof}

\end{document}